\newtheorem{theorem}{Theorem}[section]
\newtheorem{proping}{Proposition}[section]
\newtheorem{coring}{Corollary}[section]
\newtheorem{lemma}[theorem]{Lemma}
\DeclareMathAlphabet{\mathpzc}{OT1}{pzc}{m}{it}
\theoremstyle{definition}
\newtheorem{definition}[theorem]{Definition}
\theoremstyle{remark}
\newtheorem{remark}[theorem]{Remark}
\numberwithin{equation}{section}
\begin{document}

\title{A Construction of Constant Scalar Curvature Manifolds with Delaunay-type Ends}

\author{Almir Silva Santos}
\address{Universidade Federal de Sergipe
Departamento de Matem\'atica
\;Av. Marechal Rondon s/n
49100-000 S\~ao Crist\'ov\~ao, SE, Brazil}
\curraddr{Instituto de Matem\'atica Pura e Aplicada (IMPA), Estrada Dona Castorina 110, 22460-320, Rio de Janeiro, RJ, Brazil}
\email{arss@impa.br}





\keywords{singular Yamabe problem, constant scalar curvature, Weyl tensor, gluing method.}

\begin{abstract}
It has been showed by Byde \cite{AB} that it is possible to attach a Delaunay-type end to a compact nondegenerate manifold of positive constant scalar curvature, provided it is locally conformally flat in a neighborhood of the attaching point. The resulting manifold is noncompact with the same constant scalar curvature. The main goal of this paper is to generalize this result. We will construct a one-parameter family of solutions to the positive singular Yamabe problem for any compact non-degenerate manifold with Weyl tensor vanishing to sufficiently high order at the singular point. If the dimension is at most 5, no condition on the Weyl tensor is needed. We will use perturbation techniques and gluing methods.
\end{abstract}

\maketitle


%
%
\section{Introduction}
In 1960 Yamabe \cite{Y} claimed that every $n-$dimensional compact Riemannian manifold $M$, $n\geq 3$, has a conformal metric of constant scalar curvature. Unfortunately, in 1968, Trudinger discovered an error in the proof. In 1984 Schoen \cite{S1}, after the works of Yamabe \cite{Y}, Trudinger \cite{T} and Aubin \cite{A}, was able to complete the proof of {\it The Yamabe Problem}:
\begin{center}
\begin{minipage}{10cm}
Let $(M^n,g_0)$ be an $n-$dimensional compact Riemannian manifold (without boundary) of dimension $n\geq 3$. Find a metric conformal to $g_0$ with constant scalar curvature.
\end{minipage}
\end{center}
See \cite{LP} and \cite{SY} for excellent reviews of the problem.

It is then natural to ask whether every noncompact Riemannian  manifold of dimension $n\geq 3$ is conformally equivalent to a complete manifold with constant scalar curvature. For noncompact manifolds with a simple structure at infinity, this question may be studied by solving the so-called {\it singular Yamabe problem}:
%
%
\begin{center}
\begin{minipage}{10cm}
Given $(M,g_0)$ an $n-$dimensional compact Riemannian manifold of dimension $n\geq 3$ and a nonempty closed set $X$ in $M$, find a complete metric $g$ on $M\backslash X$ conformal to $g_0$ with constant scalar curvature.
\end{minipage}
\end{center}
In analytical terms, since we may write $g=u^{4/(n-2)}g_0$, this problem is equivalent to finding a positive function $u$ satisfying
%
%
\begin{equation}\label{eq28}
\left\{
\begin{array}{l}
\displaystyle\Delta_{g_0}u-\frac{n-2}{4(n-1)}R_{g_0}u+ \frac{n-2}{4(n-1)}Ku^{\frac{n+2}{n-2}}=0\;\;\;\mbox{ on }\;\;\;M\backslash X\\
u(x)\rightarrow\infty \mbox{ as } x\rightarrow X
\end{array}\right.
\end{equation}
where $\Delta_{g_0}$ is the Laplace-Beltrami operator associated with the metric $g_0$, $R_{g_0}$ denotes the scalar curvature of the metric $g_0$, and $K$ is a constant. We remark that the metric $g$ will be complete if $u$ tends to infinity with a sufficiently fast rate.

The singular Yamabe problem has been extensively studied in recent years, and many existence results as well as obstructions to existence are known. This problem was considered initially in the negative case by Loewner and Nirenberg \cite{LN}, when $M$ is the sphere $\mathbb{S}^{n}$ with its standard metric. In the series of papers \cite{AM1}--\cite{AM3} Aviles and McOwen have studied the case when $M$ is arbitrary. For a solution to exist on a general $n-$dimensional compact Riemannian manifold $(M,g_0)$, the size of $X$ and the sign of $R_g$ must be related to one another: it is known that if a solution exists with $R_g<0$, then $\dim X>(n-2)/2$, while if a solution exists with $R_g\geq 0$, then $\dim X\leq (n-2)/2$ and in addition the first eigenvalue of the conformal Laplacian of $g_0$ must be nonnegative. Here the dimension is to be interpreted as Hausdorff dimension. Unfortunately, only partial converses to these statements are known. For example, Aviles and McOwen \cite{AM2} proved that when $X$ is a closed smooth submanifold of dimension $k$, a solution for (\ref{eq28}) exists with $R_g<0$ if and only if $k>(n-2)/2$. We direct the reader to the papers \cite{AM1}--\cite{AM3}, \cite{F}, \cite{FM}, \cite{LN}, \cite{MP2}--\cite{MP}, \cite{MPU}--\cite{MS}, \cite{P}, \cite{S}, \cite{SY1} and the references contained therein.

In the constant negative scalar curvature case, it is possible to use the maximum principle, and solutions are constructed using barriers regardless of the dimension of $X$. See \cite{AM1}--\cite{AM3}, \cite{F}, \cite{FM} for more details.

Much is known about the constant positive scalar curvature case. When $M$ is the round sphere $\mathbb{S}^n$ and $X$ is a single point, by a result of Caffarelli, Gidas, Spruck \cite{CGS}, it is known that there is no solution of (\ref{eq28}). See \cite{MPU} for a different proof. In the case where $M$ is the sphere with its standard metric, in 1988, R. Schoen \cite{S} constructed solutions with $R_g>0$ on the complement of certain sets of Hausdorff dimension less than $(n-2)/2$. In particular, he produced solutions to (\ref{eq28}) when $X$ is a finite set of points of at least two elements. Using a different method, later in 1999, Mazzeo and Pacard proved the following existence result:
\begin{theorem}[Mazzeo--Pacard,  \cite{MP}]
Suppose that $X=X'\cup X ''$ is a disjoint union of submanifolds in $\mathbb{S}^n$, where $X'=\{p_1,\ldots,p_k\}$ is a collection of points, and $X''=\cup_{j=1}^mX_j$ where $\dim X_j=k_j$. Suppose further that $0<k_j\leq(n-2)/2$ for each $j$, and either $k=0$ or $k\geq 2$. Then there exists a complete metric $g$ on $\mathbb{S}^n\backslash X$ conformal to the standard metric on $\mathbb{S}^n$, which has constant positive scalar curvature $n(n-1)$.
\end{theorem}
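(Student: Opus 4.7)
The plan is to construct the metric by a singular-perturbation gluing argument, building approximate solutions out of known model singular solutions and then correcting them by a fixed-point iteration. Write the sought metric as $g = u^{4/(n-2)}g_0$ where $g_0$ is the round metric on $\mathbb{S}^n$; one must find $u>0$ on $\mathbb{S}^n\setminus X$ satisfying \eqref{eq28} with $K = n(n-1)$ and blowing up fast enough along $X$ to make $g$ complete.

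First I would produce local model solutions near each component of $X$. Around a point $p_i\in X'$, use stereographic projection to identify a punctured neighborhood with $\mathbb{R}^n\setminus\{0\}$, and on this punctured Euclidean space use the rotationally symmetric Delaunay solutions (equivalently, periodic solutions of an ODE on the cylinder $\mathbb{R}\times\mathbb{S}^{n-1}$), which form a one-parameter family indexed by a Delaunay (neck-size) parameter $\varepsilon$. For a submanifold $X_j\subset X''$ of dimension $k_j$ with $0<k_j\leq(n-2)/2$, do the same fiberwise in the normal bundle: on a tubular neighborhood, use the generalized Delaunay profile $v(r)$ on $\mathbb{R}^{n-k_j}\setminus\{0\}$ pulled up trivially along $X_j$; the dimension restriction is exactly what guarantees the corresponding ODE admits positive singular solutions consistent with positive scalar curvature.

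Next, using cutoff functions, graft these model solutions into shrinking tubular neighborhoods of the components of $X$ to obtain a family of approximate solutions $u_\tau$ depending on a finite-dimensional parameter $\tau=(\varepsilon_i,\ldots)$ collecting the Delaunay parameters and translation parameters along each $X_j$. Setting $u=u_\tau+w$, one reduces \eqref{eq28} to solving $L_\tau w = N_\tau(w)$, where $L_\tau$ is the linearized conformal Laplacian at $u_\tau$ and $N_\tau$ is quadratically small in $w$. Work in weighted Hölder spaces $\mathcal{C}^{k,\alpha}_{\mu,\nu}$ on $\mathbb{S}^n\setminus X$ whose weights are adapted to the Delaunay asymptotics near each component; the key analytic step is to invert $L_\tau$, uniformly in $\tau$ as the neck sizes shrink, modulo a finite-dimensional cokernel spanned by translated/rescaled Delaunay Jacobi fields (the deficiency subspace dictated by indicial roots of the linearization on each model). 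Once this right inverse is in hand, a contraction-mapping argument on a ball in the weighted space produces a small correction $w_\tau$ solving the projected equation.

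The main obstacle, and the reason for the hypothesis $k=0$ or $k\geq 2$, is the finite-dimensional Lyapunov--Schmidt reduction that remains: one must choose $\tau$ so that the projection of $N_\tau(w_\tau)$ onto the cokernel vanishes. Each Delaunay end contributes a balancing equation whose leading part measures a discrepancy of the neck parameter against the geometry at the attaching point. On $\mathbb{S}^n$ a single puncture (a standalone point, $k=1$) produces an unbalanced residue that no choice of parameter can cancel—consistent with the Caffarelli--Gidas--Spruck obstruction for one-point singularities—whereas two or more points allow the residues to compensate each other, and a positive-dimensional $X_j$ already carries a continuous family of deformations along the submanifold that absorb the obstruction. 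Verifying that this matching system is solvable (via a degree or implicit-function argument, exploiting the monotonicity of the Delaunay parameter) is the decisive step; once it is done, the resulting $u = u_\tau + w_\tau$ is the desired complete conformal metric of scalar curvature $n(n-1)$ on $\mathbb{S}^n\setminus X$.
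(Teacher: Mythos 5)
The paper quotes this theorem from \cite{MP} in the introduction without proof; there is no internal argument for it to compare your proposal against, since the body of the paper proves a different statement (the Main Theorem, which attaches a single Delaunay end to an arbitrary nondegenerate compact manifold under a Weyl-vanishing hypothesis) via an interior/exterior Cauchy-data-matching construction.

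As an account of the Mazzeo--Pacard argument, your sketch has the right skeleton: Delaunay models at each isolated point, Fowler-type profiles transverse to each positive-dimensional $X_j$ (the bound $k_j\leq(n-2)/2$ supplying the correct exponent), weighted H\"older spaces calibrated to the Delaunay indicial roots, a right inverse for the linearization bounded uniformly as the necks degenerate, a contraction-mapping correction, and the exclusion of $k=1$ tied to a balancing obstruction consistent with Caffarelli--Gidas--Spruck. Two cautions. First, ``pulled up trivially along $X_j$'' is too glib: the normal bundle of $X_j$ in $\mathbb{S}^n$ need not be trivial, and the positive-dimensional part of the construction is really the content of the earlier paper \cite{MP2}; the new piece in \cite{MP} is the isolated points. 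Second, in this circle of papers the finite-dimensional obstruction coming from the low eigenmodes on the Delaunay ends is not handled by projecting onto a fixed cokernel and demanding that the projection of $N_\tau(w_\tau)$ vanish; instead one enlarges the family of model ends by geometric parameters --- the Delaunay necksize, translation along the Delaunay axis, and translation of the ``point at infinity,'' which are exactly the $(\varepsilon,R,a)$ of Section~\ref{sec08} of this paper --- constructs solutions on each piece with the low-mode boundary data left free, and then solves a finite-dimensional matching system for those parameters by a Brouwer fixed-point or degree argument, as carried out in Section~\ref{sec05} here. Making that parameter count explicit, and showing why a single isolated point on $\mathbb{S}^n$ overdetermines the system while $k\geq 2$ does not, is the decisive step that your sketch names but does not supply.
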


Also, it is known that if $X$ is a finite set of at least two elements, and $M=\mathbb{S}^n$, the moduli space of solutions has dimension equal to the cardinality of $X$ (see \cite{MPU}).

The first result for arbitrary compact Riemannian manifolds in the positive case appeared in 1996. Mazzeo and Pacard \cite{MP2} established the following result:
\begin{theorem}[Mazzeo--Pacard, \cite{MP2}]
Let $(M,g_0)$ be any $n-$dimensional compact Riemannian manifold with constant nonnegative scalar curvature. Let $X\subset M$ be any finite disjoint union of smooth submanifolds $X_i$ of dimensions $k_i$ with $0<k_i\leq (n-2)/2$. Then there is an infinite dimensional family of complete metrics on $M\backslash X$ conformal to $g_0$ with constant positive scalar curvature.
\end{theorem}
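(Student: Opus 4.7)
My plan is to produce the family of complete constant positive scalar curvature metrics by gluing local model solutions along each $X_i$ to the fixed background $(M,g_0)$ and then perturbing to an exact solution. Writing $g=u^{4/(n-2)}g_0$, the problem reduces to solving~(\ref{eq28}) with $K>0$ for a positive $u$ that blows up along $X$. The local geometric picture near a submanifold $X_i$ of dimension $k_i$ and codimension $n-k_i\geq(n+2)/2$ is captured by a separation-of-variables analysis in Fermi coordinates $(y,z)$ with $y\in\mathbb{R}^{n-k_i}$ normal and $z\in X_i$ tangential: the conformal Laplacian decouples to leading order into a radial ODE in $|y|$ and a tangential operator, and for each smooth positive function $\varphi_i$ on $X_i$ one obtains a Delaunay/Fowler-type singular model solution whose leading coefficient in $|y|^{-(n-2-2k_i)/2}$ is modulated by $\varphi_i(z)$.

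\emph{Step one} is to build, for each choice of parameters $\underline\varphi=(\varphi_1,\dots,\varphi_m)$, an approximate solution $u_{\underline\varphi}$ by grafting the above model into a tubular neighborhood of each $X_i$ using a cut-off, and matching to a smooth positive background solution (conformal to $g_0$, positive because $R_{g_0}\geq0$) on the complement. The resulting error $E(u_{\underline\varphi})$, i.e.\ the failure of~(\ref{eq28}), is supported in the transition annulus and can be made arbitrarily small in a suitable weighted Hölder norm by shrinking the gluing scale.

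\emph{Step two}, the technical core, is the Fredholm analysis of the linearization
\[
L_{\underline\varphi}w=\Delta_{g_0}w-\tfrac{n-2}{4(n-1)}R_{g_0}w+\tfrac{n+2}{4(n-1)}Ku_{\underline\varphi}^{4/(n-2)}w
\]
on weighted Hölder spaces $\mathcal C^{2,\alpha}_\delta(M\setminus X)$ adapted to the edge structure of each $X_i$. Following Mazzeo's edge calculus, I would compute the indicial roots of $L_{\underline\varphi}$ on the normal cone to $X_i$, verify that for a weight $\delta$ lying in an appropriate indicial gap the operator is semi-Fredholm, and show that its deficiency is exactly spanned by the infinitesimal variations of $\underline\varphi$. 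Nonnegativity of $R_{g_0}$ is used to rule out a nontrivial $L^2$ kernel of the conformal Laplacian on $M$ that would obstruct solvability, and produces a right inverse with norm uniform over a small neighborhood of parameters $\underline\varphi$.

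\emph{Step three} is a standard contraction-mapping argument: writing $u=u_{\underline\varphi}+w$ and using the right inverse, the nonlinear equation becomes a fixed-point problem for $w$ in a small ball of $\mathcal C^{2,\alpha}_\delta$; the quadratic nonlinearity together with the smallness of $E(u_{\underline\varphi})$ gives a unique solution for each admissible $\underline\varphi$. Since $\underline\varphi$ ranges over an open subset of $\prod_{i}C^{2,\alpha}(X_i)$, which is infinite dimensional, this produces the claimed family. The principal obstacle I anticipate is precisely the Fredholm step: computing the indicial roots of $L_{\underline\varphi}$ on the normal cone explicitly, selecting a weight $\delta$ that simultaneously controls the kernel and forces the cokernel to consist only of the genuine profile variations $\delta\varphi_i$, and showing that the right inverse stays bounded as the gluing scale is refined. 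Everything else is routine once Mazzeo's edge calculus is in hand.
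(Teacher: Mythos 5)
This theorem is quoted from~\cite{MP2}; the present paper does not reprove it, so there is no in-paper argument to compare against, and your proposal must be judged on its own. Your three-step architecture---graft a local singular model along each $X_i$, run a weighted Fredholm analysis for the linearization, and close by contraction---is in fact the Mazzeo--Pacard strategy, but you have the local model wrong in two ways that would sink the Fredholm step.

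The singular profile blows up like $|y|^{-(n-2)/2}$ regardless of $k_i$, not like $|y|^{-(n-2-2k_i)/2}$. In Fermi coordinates $(y,z)$ with $y\in\mathbb{R}^{n-k_i}$ normal to $X_i$, setting $v(t)=|y|^{(n-2)/2}u$ and $t=-\log|y|$ turns the radial model equation into
$$v''+k_i\,v'-\tfrac{(n-2)(n-2-2k_i)}{4}\,v+\tfrac{n(n-2)}{4}\,v^{\frac{n+2}{n-2}}=0.$$
The combination $(n-2-2k_i)/2$ that you wrote as an exponent is actually the factor in the coefficient of the linear term; its sign is exactly the condition $k_i\le(n-2)/2$, and its vanishing is the borderline case. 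Using it as the exponent produces the wrong indicial roots, so the weight-selection part of your Step two would fail. More seriously, the first-order term $k_i\,v'$ appears precisely when $k_i>0$ and acts as damping: the periodic Delaunay/Fowler orbits exist only for $k_i=0$, and for $k_i>0$ bounded trajectories spiral into the cone equilibrium. Thus the local model for $k_i>0$ is a cone, not a Delaunay necklace, and this is precisely why the theorem works for $0<k_i$ with no nondegeneracy hypothesis and no finite-dimensional parameter matching: there are no Delaunay Jacobi fields to generate an obstructing cokernel, the weighted linearization on $M\setminus X$ is surjective at the natural weight, and its infinite-dimensional kernel accounts for the family. Your phrase that the deficiency is ``exactly spanned by the infinitesimal variations of $\underline\varphi$'' has the mechanism backwards and would lead you into an unnecessary Lyapunov--Schmidt reduction. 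Finally, $R_{g_0}\ge 0$ does not rule out a kernel of the linearized operator on $M$ (the round sphere, with $R>0$, is degenerate); the theorem's freedom from a nondegeneracy hypothesis is again a consequence of $k_i>0$, not of the sign of the scalar curvature, whose role is only to give the right sign for the problem and the positivity of the background.
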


Their method does not apply to the case in which $X$ contains isolated points. If $X=\{p\}$, an existence result was obtained by Byde in 2003 under an extra assumption. It can be stated as follows:
\begin{theorem}[A. Byde, \cite{AB}]\label{teo05}
Let $(M,g_0)$ be any $n-$dimensional compact Riemannian manifold of constant scalar curvature $n(n-1)$, nondegenerate about 1, and let $p\in M$ be a point in a neighborhood of which $g_0$ is conformally flat. There is a constant $\varepsilon_0>0$ and a one-parameter family of complete metrics $g_\varepsilon$ on $M\backslash\{p\}$ defined for $\varepsilon\in(0,\varepsilon_0)$, conformal to $g_0$, with constant scalar curvature $n(n-1)$. Moreover, $g_\varepsilon\rightarrow g_0$ uniformly on compact sets in $M\backslash\{p\}$ as $\varepsilon\rightarrow 0$.
\end{theorem}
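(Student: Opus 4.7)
The plan is to construct $g_\varepsilon$ by a singular perturbation / gluing argument. The model for the end is the Delaunay family: explicit radial positive singular solutions $u_\varepsilon(|x|)$ of the flat-space Yamabe equation $\Delta u + \frac{n(n-2)}{4} u^{(n+2)/(n-2)} = 0$ on $\mathbb{R}^n \setminus \{0\}$, parameterized by a neck size $\varepsilon \in (0,\varepsilon_0]$ and (via the cylindrical change of variables $r = e^{-t}$) equivalent to periodic ODE solutions on $\mathbb{R} \times S^{n-1}$. Because $g_0$ is conformally flat near $p$, I can assume after an initial conformal change supported in a small ball $B_{r_0}(p)$ that $g_0 = \delta_{ij}$ there, so the full singular Yamabe equation on $M \setminus \{p\}$ reduces, inside that ball, to exactly the equation the Delaunay solutions solve.

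I would then form an approximate solution $u_{\varepsilon,\mathrm{app}}$ equal to $1$ outside $B_{2\varepsilon^\alpha}(p)$ and equal to $u_\varepsilon$ inside $B_{\varepsilon^\alpha}(p)$, glued via a radial cutoff on the annulus $\{\varepsilon^\alpha \le |x| \le 2\varepsilon^\alpha\}$ for a carefully chosen exponent $\alpha \in (0,1)$. Writing the desired solution as $u = u_{\varepsilon,\mathrm{app}} + v$, the problem becomes $\mathcal{L}_\varepsilon v = N_\varepsilon(v) + E_\varepsilon$, where $\mathcal{L}_\varepsilon$ is the linearization of the conformal Laplacian at $u_{\varepsilon,\mathrm{app}}$, the error $E_\varepsilon$ is supported in the gluing annulus and decays as a positive power of $\varepsilon$, and $N_\varepsilon(v)$ is the quadratic remainder in $v$. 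The natural function spaces are weighted Hölder spaces on $M \setminus \{p\}$ that, in cylindrical coordinates on the Delaunay end, impose exponential decay with a weight $\delta$ chosen to avoid the indicial roots of the linearized Delaunay operator.

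The analytic core is constructing a right inverse $\mathcal{G}_\varepsilon$ for $\mathcal{L}_\varepsilon$ whose norm is bounded independently of $\varepsilon$. On the compact piece one uses the hypothesis that $g_0$ is nondegenerate about $1$: the Jacobi operator of the Yamabe functional at $u \equiv 1$ is invertible on $M$. On the Delaunay end one uses the well-known Fourier/indicial analysis of the linearization about $u_\varepsilon$ on the cylinder: after projecting away from the finite-dimensional subspace spanned by the geometric Jacobi fields (translation in $t$, change of Delaunay parameter, rotations of $S^{n-1}$) one obtains a bounded right inverse in the chosen weighted space. These two inverses are patched together by the same cutoffs used to form $u_{\varepsilon,\mathrm{app}}$, with a standard iteration to correct the commutator terms. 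Once $\mathcal{G}_\varepsilon$ is in hand with uniform norm, the fixed-point equation $v = \mathcal{G}_\varepsilon(E_\varepsilon + N_\varepsilon(v))$ becomes a contraction on a ball of radius $C\|E_\varepsilon\|$ by a routine estimate on $N_\varepsilon$, yielding a solution $v_\varepsilon$ with $\|v_\varepsilon\| \to 0$.

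The main obstacle I expect is precisely the uniform invertibility step: at finite $\varepsilon$ the geometric Jacobi fields on the Delaunay end produce almost-kernel directions of $\mathcal{L}_\varepsilon$ that force $\mathcal{G}_\varepsilon$ to blow up unless the obstruction is absorbed by free parameters. The remedy is to enlarge the unknown to a pair $(v, \eta)$ where $\eta$ encodes the finite set of moduli (translation, rotation, Delaunay parameter, and a constant conformal factor on the compact side), solve the nonlinear equation modulo the approximate kernel, and then tune $\eta$ to annihilate the projection of the residual onto the Jacobi fields. The one-parameter family announced in the theorem is obtained by letting $\varepsilon$ itself vary over $(0, \varepsilon_0)$, and $g_\varepsilon \to g_0$ uniformly on compact subsets of $M \setminus \{p\}$ follows because $u_{\varepsilon,\mathrm{app}} \to 1$ locally uniformly away from $p$ and the correction $v_\varepsilon$ is $o(1)$ in the weighted norm, hence also in $C^0_{\mathrm{loc}}(M \setminus \{p\})$.
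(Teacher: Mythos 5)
Your proposal is a valid gluing strategy, but it is genuinely different in organization from what the paper (and Byde) actually do. You construct one global approximate solution $u_{\varepsilon,\mathrm{app}}$ by cutting and pasting $1$ to a Delaunay solution across an annulus, then seek a uniform right inverse for the globally linearized operator $\mathcal{L}_\varepsilon$ and run a Lyapunov--Schmidt reduction over the moduli (translation, Delaunay parameter, rotations) to kill the residual projections onto the approximate kernel. The paper instead never forms a global approximate solution: it solves the constant scalar curvature equation \emph{separately} on the punctured ball $B_{r_\varepsilon}(p)\setminus\{p\}$ (near the Delaunay solution $u_{\varepsilon,R,a}$, with prescribed high-eigenmode Dirichlet data $\phi$; Section~\ref{sec02}) and on the exterior $M\setminus B_{r_\varepsilon}(p)$ (near $1+\lambda G_p$, with prescribed boundary data $\varphi$ orthogonal to constants; Section~\ref{sec04}), and then, in Section~\ref{sec05}, matches the Cauchy data $(u,\partial_r u)$ across $\partial B_{r_\varepsilon}(p)$ by tuning the finite set of parameters $(b, a, \lambda)$ together with the infinite-dimensional boundary data $(\phi,\varphi)$, mode by mode (high modes, constants, linear modes), via fixed point and Brouwer's theorem. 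The advantage of the paper's route is that it avoids the hardest step in yours --- proving a uniform-in-$\varepsilon$ bound for a right inverse across the gluing region, where the commutator errors and almost-kernel directions accumulate --- because each piece's linearization is analyzed on its own natural domain ($G_{\varepsilon,R,r,a}$ from the Delaunay theory, $G_{r,g_0}$ from nondegeneracy), and the interaction between the two pieces is reduced to a finite-dimensional matching problem. Your approach would also work in principle (it is the classical approximate-solution gluing in the style of Kapouleas/Schoen), but you should be aware of two points of imprecision: (i) the raw $u_\varepsilon$ does not approach $1$ at a transition radius $\varepsilon^\alpha$, so you must work with the rescaled family $u_{\varepsilon,R}$ and treat $R$ (equivalently the $b$ in the paper's $R^{(2-n)/2}=2(1+b)\varepsilon^{-1}$) as one of the moduli to tune, which you only hint at; and (ii) the ``constant conformal factor on the compact side'' you list is not quite the right free parameter --- the paper uses the coefficient $\lambda$ of the Green's function $G_p$, whose leading singular behavior $|x|^{2-n}$ is what pairs against the corresponding Delaunay asymptotics, and this is what makes the constant-mode matching in system~(\ref{eq03}) close.
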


See \cite{AB}, \cite{MZ}, \cite{MP}, \cite{MPU} and \cite{MS} for more details about the positive singular Yamabe problem.

This work is concerned with the positive singular Yamabe problem in the case $X$ is a single point (or when $X$ is finite, more generally). Our main result is the construction of solutions to the singular Yamabe problem under a condition on the Weyl tensor. If the dimension is at most 5, no condition on the Weyl tensor is needed, as we will see below. We will use the gluing method, similar to that employed by Byde \cite{AB}, Jleli \cite{M}, Jleli and Pacard \cite{JP}, Kaabachi and Pacard \cite{KP}, Kapouleas \cite{K}, Mazzeo and Pacard \cite{MP1},\cite{MP}, Mazzeo, Pacard and Pollack \cite{MPP}, \cite{MPP1}, and other  authors. Our result generalizes the result of Byde, Theorem \ref{teo05}, and it reads as follows:
%
%
\\

\noindent{\bf Main Theorem:} {\it Let $(M^n,g_0)$ be an $n-$dimensional compact Riemannian manifold of scalar curvature $n(n-1)$, nondegenerate about 1, and let $p\in M$ with $\nabla_{g_0}^kW_{g_0}(p)=0$ for $k=0,\ldots,\left[\frac{n-6}{2}\right]$, where $W_{g_0}$ is the Weyl tensor of the metric $g_0$. Then, there exist a constant $\varepsilon_0>0$ and a one-parameter family of complete metrics $g_\varepsilon$ on $M\backslash\{p\}$ defined for $\varepsilon\in(0,\varepsilon_0)$, conformal to $g_0$, with scalar curvature $n(n-1)$. Moreover, each $g_\varepsilon$ is asymptotically Delaunay and $g_\varepsilon\rightarrow g_0$ uniformly on compact sets in $M\backslash\{p\}$ as $\varepsilon\rightarrow 0$.}\\

For the gluing procedure to work, there are two restrictions on the data $(M,g_0,X)$: non-degeneracy and the Weyl vanishing condition. The non-degeneracy is defined as follows (see \cite{AB}, \cite{KMS} and \cite{MPU1}):
%
%
\begin{definition}\label{def3}
A metric $g$ is {\it nondegenerate} at $u\in C^{2,\alpha}(M)$ if the operator $L_g^u: C_{\mathcal{D}}^{2,\alpha}(M)\rightarrow C^{0,\alpha}(M)$ is surjective for some $\alpha\in(0,1)$, where 
$$L_g^u(v)= \Delta_gv- \frac{n-2}{4(n-1)}R_gv+\frac{n(n+2)}{4}u^{\frac{4}{n-2}}v,$$
$\Delta_g$ is the Laplace operator of the metric $g$ and $R_g$ is the scalar curvature of $g$. Here $C^{k,\alpha}(M)$ are the standard H\"older spaces on $M$, and the $\mathcal{D}$ subscript indicates the restriction to functions vanishing on the boundary of $M$ (if there is one).
\end{definition}

Although it is the surjectivity that is used in the nonlinear analysis, it is usually easier to check injectivity. This is a corollary of the non-degeneracy condition on $M$ in conjunction with self-adjointness. For example, it is clear that the round sphere $\mathbb{S}^n$ is degenerate because $L_{g_0}^1=\Delta_{g_0}+n$ annihilates the restrictions of linear functions on $\mathbb{R}^{n+1}$ to $\mathbb{S}^n$.

As it was already expected by Chru\'sciel and Pollack \cite{CP}, when $3\leq n\leq 5$ we do not need any hypothesis about the Weyl tensor, that is, in this case, (\ref{eq28}) has a solution for any nondegenerate compact manifold $M$ and $X=\{p\}$ with $p\in M$ arbitrary. We will show in Section \ref{sec05} that the product manifolds $\mathbb{S}^2(k_1)\times \mathbb{S}^2(k_2)$ and $\mathbb{S}^2(k_3)\times \mathbb{S}^2(k_4)$ are nondegenerate except for countably many values of $k_1/k_2$ and $k_3/k_4$. Therefore our Main Theorem applies to these manifolds. We notice that they are not locally conformally flat.

Byde proved his theorem assuming that $M$ is conformally flat in a neighborhood of $p$. With this assumption, the problem gets simplified since in the neighborhood of $p$ the metric is conformal to the standard metric of $\mathbb{R}^n$, and in this case it is possible to transfer the metric on $M\backslash\{p\}$ to cylindrical coordinates, where there is a family of well-known Delaunay-type solutions. In our case we only have that the Weyl tensor vanishes to sufficiently high order at $p$. Since the singular Yamabe problem is conformally invariant, we can work in conformal normal coordinates. In such coordinates it is more convenient to work with the Taylor expansion of the metric, instead of dealing with derivatives of the Weyl tensor. As indicated in \cite{KMS}, we get some simplifications. In fact, this assumption will be fundamental to solve the problem locally in Section \ref{sec02}. We will exploit the fact that the first term in the expansion of the scalar curvature, in conformal normal coordinate, is orthogonal to the low eigenmodes. Pollack \cite{P} has indicated that it would be possible to find solutions with one singular point with some Weyl vanishing condition, as opposed to the case of the round metric on $\mathbb{S}^n$.

The motivation for $\left[\frac{n-6}{2}\right]$ in the Main Theorem comes from the {\it Weyl Vanishing Conjecture} (see \cite{S2}). It states that if a sequence $v_i$ of solutions to the equation
$$\Delta_gv_i-\frac{n-2}{4(n-1)}R_gv_i+v_i^{\frac{n+2}{n-2}}=0$$
in a compact Riemannian manifold $(M,g)$, blows-up at $p\in M$, then one should have
$$\nabla^k_gW_g(p)=0\;\;\;\mbox{ for every }\;\;0\leq k\leq \left[\frac{n-6}{2}\right].$$
Here $W_g$ denotes the Weyl tensor of the metric $g$. This conjecture has been proved by Marques for $n\leq 7$ in \cite{FCM1}, Li and Zhang for $n\leq 9$ in \cite{LZ} and for $n\leq 11$ in \cite{LZ1}, and by Khuri, Marques and Schoen for $n\leq 24$ in \cite{KMS}. The Weyl Vanishing Conjecture was in fact one of the essential pieces of the program proposed by Schoen in \cite{S2} to establish compactness in high dimensions (see \cite{KMS}). In \cite{FCM2}, based on the works of Brendle \cite{B1} and Brendle and Marques \cite{BM1}, Marques constructs counterexamples for any $n\geq 25$.

The order $\left[\frac{n-6}{2}\right]$ comes up naturally in our method, but we do not know if it is the optimal one (see Remark \ref{remark02}.)

The Delaunay metrics form the local asymptotic models for isolated singularities of locally conformally flat constant positive scalar curvature metrics, see \cite{CGS} and \cite{KMPS}. In dimensions $3\leq n\leq 5$ this also holds in the non-conformally flat setting.  In \cite{FCM}, Marques proved that if $3\leq n\leq 5$ then every solution of the equation (\ref{eq28}) with a nonremovable isolated singularity is asymptotic to a Delaunay-type solutions. This motivates us to seek solutions that are asymptotic to Delaunay. We use a perturbation argument together with the fixed point method to find solutions close to a Delaunay-type solution in a small ball centered at $p$ with radius $r$. We also construct solutions in the complement of this ball. After that, we show that for small enough $r$ the two metrics can be made to have exactly matching Cauchy data. Therefore (via elliptic regularity theory) they match up to all orders. See \cite{JP} for an application of the method.

We will indicate in the end of this paper how to handle the case of more than one point. We prove:
\begin{theorem}
Let $(M^n,g_0)$ be an $n-$dimensional compact Riemannian manifold of scalar curvature $n(n-1)$, nondegenerate about 1. Let $\{p_1,\ldots,p_k\}$ a set of points in $M$ with $\nabla^j_{g_0}W_{g_0}(p_i)=0$ for $j=0,\ldots,\left[\frac{n-6}{2}\right]$ and $i=1,\ldots,k$, where $W_{g_0}$ is the Weyl tensor of the metric $g_0$. There exists a complete metric $g$ on $M\backslash\{p_1,\ldots,p_k\}$ conformal to $g_0$, with constant scalar curvature $n(n-1)$, obtained by attaching Delaunay-type ends to the points $p_1,\ldots,p_k$.
\end{theorem}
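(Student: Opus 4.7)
The plan is to run the same gluing scheme as in the Main Theorem but carry it out simultaneously in a neighborhood of each singular point $p_i$, then patch with the exterior. Choose $r>0$ small enough that the geodesic balls $B_r(p_1),\ldots,B_r(p_k)$ are pairwise disjoint, and pick conformal normal coordinates centered at each $p_i$. Because the Weyl tensor vanishes at $p_i$ to order $[(n-6)/2]$, the scalar curvature expansion in those coordinates enjoys the same orthogonality property to the low eigenmodes that was used in Section \ref{sec02}. One therefore obtains, for each $i$, a family of local solutions $u_i^{\mathrm{int}}(\varepsilon_i,\phi_i,\psi_i)$ on $B_r(p_i)\setminus\{p_i\}$ asymptotic to a Delaunay metric with neck size $\varepsilon_i$, parametrized by boundary Cauchy data $(\phi_i,\psi_i)$ at $\partial B_r(p_i)$. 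This is exactly the local construction used in the one-point proof, applied independently at each $p_i$ since the balls are disjoint.

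Next I would solve the exterior problem on $\Omega_r:=M\setminus\bigsqcup_i B_r(p_i)$. This is a compact manifold with $k$ boundary spheres. Writing the conformal factor as $1+v$ with $v$ small, the equation becomes a perturbation of the linear problem $L_{g_0}^{1}v=f$ with prescribed Dirichlet data $\phi=(\phi_1,\ldots,\phi_k)$ on $\partial \Omega_r$. The global nondegeneracy of $(M,g_0)$ about $1$, which gives surjectivity of $L_{g_0}^1$ in the sense of Definition \ref{def3}, allows one to solve this boundary value problem for each small $\phi$ by a fixed point argument, producing an exterior solution $u^{\mathrm{ext}}(\phi)$ with appropriate estimates uniform in $r$ and $\phi$. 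Note that the exterior operator is one and the same: the multiplicity of boundary components does not degrade the functional analysis, only the dimension of the boundary trace space.

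The matching step is where the multi-point version needs its only genuine new input. At each $\partial B_r(p_i)$ one must simultaneously equate the values and normal derivatives of $u^{\mathrm{ext}}$ with those of $u_i^{\mathrm{int}}$. Since the interior pieces are independent (one set of matching parameters $(\varepsilon_i,\phi_i,\psi_i)$ per point), the Cauchy data matching is a product of $k$ finite-dimensional systems coupled only through the exterior solution, which depends linearly, to leading order, on $\phi$. In the one-point case this system is solved by a fixed point argument exploiting the smallness of $r$ and $\varepsilon$; because the coupling between different $p_i$ enters only at subleading order in $r$, the contraction argument goes through with essentially the same estimates, the only cost being that one now inverts a $k$-block operator whose diagonal blocks are the one-point operators and whose off-diagonal blocks are small. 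Choosing $\varepsilon_i\in(0,\varepsilon_0)$ small and solving this system produces the desired complete metric $g$ on $M\setminus\{p_1,\ldots,p_k\}$ with Delaunay ends at each $p_i$.

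The main obstacle to anticipate is precisely the invertibility of this coupled $k$-block matching operator: one must verify that the off-diagonal contributions, coming from the Green's function of $L_{g_0}^1$ evaluated between distinct balls, are genuinely of lower order in $r$ than the diagonal terms, uniformly in the parameters $\varepsilon_i$. Once this is checked, the remainder of the argument is a direct repetition of the one-point proof with vector-valued bookkeeping, and elliptic regularity upgrades the $C^{2,\alpha}$-matching to smooth matching on each $\partial B_r(p_i)$.
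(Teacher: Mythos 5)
Your proposal follows the same route as the paper's Section~\ref{sec16}: independent interior Delaunay constructions at each $p_i$, a single exterior problem on $M\setminus\bigsqcup_i B_r(p_i)$ solved via the nondegeneracy of $L_{g_0}^1$ with boundary data now ranging over a $k$-fold product, and a block-diagonal (to leading order) Cauchy-data matching at the $k$ boundary spheres. The only detail you leave implicit that the paper spells out is that the exterior ansatz uses the combination $\sum_{i=1}^k\lambda_iG_{p_i}$ of Green's functions (one free coefficient per puncture) to control the constant mode at each boundary component, which is exactly the $k$-block structure you anticipate.
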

The organization of this paper is as follows.

In Section \ref{sec03} we record some notation that will be used throughout the paper. We review some results concerning the Delaunay-type solutions, as well as the function spaces on which the linearized operator will be defined. We will recall some results about the Poisson operator for the Laplace operator $\Delta$ defined in $B_r(0)\backslash\{0\}\subset\mathbb{R}^n$ and in $\mathbb{R}^n\backslash B_r(0)$. Finally, we will review some results concerning conformal normal coordinates and scalar curvature in these coordinates.

In Section \ref{sec02}, with the assumption on the Weyl tensor and using a fixed point argument we construct a family of constant scalar curvature metrics in a small ball centered at $p\in M$, which depends on $n+2$ parameters with prescribed Dirichlet data. Moreover, each element of this family is asymptotically Delaunay.

In Section \ref{sec04}, we use the non-degeneracy of the metric $g_0$ to find a right inverse for the operator $L_{g_0}^1$ in a suitable function space. After that, we use a fixed point argument to construct a family of constant scalar curvature metrics in the complement of a small ball centered at $p\in M$, which also depends on $n+2$ parameters with prescribed Dirichlet data. Each element of this family is a perturbation of the metric $g_0$.

In Section \ref{sec05}, we put the results obtained in previous sections together to find a solution for the positive singular Yamabe problem with only one singular point. Using a fixed point argument, we examine suitable choices of the parameter sets on each piece so that the Cauchy data can be made to match up to be $C^1$ at the boundary of the ball. The ellipticity of the constant scalar curvature equation then immediately implies that the glued solutions are smooth.

Finally, in Section \ref{sec16}, we briefly explain the changes that need to be made in order to deal with more than one singular point.\\

\noindent{\bf Acknowledgements.} The content of this paper is in the author's doctoral thesis at IMPA. The author is specially grateful to his advisor Prof. Fernando C. Marques for numerous mathematical conversations and constant encouragement. The author was partially supported by CNPq-Brazil.
%
%
\section{Preliminaries}\label{sec03}
%
%
In this section we record some notation and results that will be used frequently, throughout the rest of the work and sometimes without comment. 
%
%
\subsection{Notation}
Let us denote by $\theta\mapsto e_j(\theta)$, for $j\in\mathbb{N}$, the eigenfunction of the Laplace operator on $\mathbb{S}^{n-1}$ with corresponding eigenvalue $\lambda_j$. That is,
$$\Delta_{\mathbb{S}^{n-1}}e_j+\lambda_j e_j=0.$$

These eigenfunctions are restrictions to $\mathbb{S}^{n-1}\subset\mathbb{R}^n$ of homogeneous harmonic polynomials in $\mathbb{R}^n$. We further assume that these eigenvalues are counted with multiplicity, namely $\lambda_0=0,\;\lambda_1=\cdots=\lambda_n=n-1,$ $\lambda_{n+1}=2n, \ldots$  and $\lambda_j\leq\lambda_{j+1},$
and that the eigenfunctions has $L^2-$norm equal to 1. The $i-$th eigenvalue counted without multiplicity is $i(i+n-2)$.

It will be necessary to divide the function space defined on $\mathbb{S}^{n-1}_r$, the sphere with radius $r>0$, into {\it high} and {\it low eigenmode} components.

If the eigenfunction decomposition of the function $\phi\in L^2(\mathbb{S}_r^{n-1})$ is given by
$$\phi(r\theta)=\sum_{j=0}^\infty \phi_j(r)e_j(\theta)\;\;\;\mbox{ where }\;\;\;\phi_j(r)=\int_{\mathbb{S}^{n-1}}\phi(r\cdot)e_j,$$
then we define the projection $\pi_r''$ onto the {\it high eigenmode} by the formula
$$\pi''_r(\phi)(r\theta):=\sum_{j=n+1}^\infty\phi_j(r)e_j(\theta).$$
The {\it low eigenmode} on $\mathbb{S}_r^{n-1}$ is spanned by the constant functions and the restrictions to $\mathbb{S}^{n-1}_r$  of linear functions on $\mathbb{R}^n$. We always will use the variable $\theta$ for points in $\mathbb{S}^{n-1}$, and use the expression $a\cdot\theta$ to denote the dot-product of a vector $a\in\mathbb{R}^n$ with $\theta$ considered as a unit vector in $\mathbb{R}^n$.

We will use the symbols $c$, $C$, with or without subscript, to denote various positive constants.
%
%
\subsection{Constant scalar curvature equation}

It is well known that if the metric $g_0$ has scalar curvature $R_{g_0}$, and the metric $\overline{g}=u^{4/(n-2)}g_0$ has scalar curvature $R_{\overline{g}}$, then $u$ satisfies the equation
\begin{equation}\label{eq67}
\Delta_{g_0}u-\frac{n-2}{4(n-1)}R_{g_0}u+\frac{n-2}{4(n-1)}R_{\overline{g}} u^{\frac{n+2}{n-2}}=0,
\end{equation}
see \cite{LP} and \cite{SY}.

In this work we seek solutions to the singular Yamabe problem (\ref{eq28}) when $(M^n,g_0)$ is an $n-$dimensional compact nondegenerate Riemannian, manifold with constant scalar curvature $n(n-1)$, $X$ is a single point $\{p\}$, by using a method employed by \cite{AB}, \cite{M}, \cite{JP}, \cite{MP1}--\cite{MPP1}, \cite{MS} and others. Thus, we need to find a solution $u$ for the equation (\ref{eq67}) with $R_{\overline{g}}$ constant, requiring that $u$ tends to infinity on approach to $p$.

We introduce the quasi-linear mapping $H_{g}$,
\begin{equation}\label{eq50}
H_{g}(u)=\Delta_{g}u-\frac{n-2}{4(n-1)}R_{g}u+ \frac{n(n-2)}{4}|u|^{\frac{4}{n-2}}u,
\end{equation}
and seek functions $u$ that are close to a function $u_0$, so that $H_{g}(u_0+u)=0$, $u_0+u>0$ and $(u+u_0)(x)\rightarrow+\infty$ as $x\rightarrow p$. This is done by considering the linearization of $H_{g}$ about $u_0$,
\begin{equation}\label{eq49}
L_{g}^{u_0}(u) =\left.\frac{\partial}{\partial t}H_{g}(u_0+tu)\right|_{t=0}= \mathcal{L}_{g}u+\frac{n(n+2)}{4}u_0^{\frac{4}{n-2}}u,
\end{equation}
where
$$\mathcal{L}_{g}u=\Delta_{g}u-\frac{n-2}{4(n-1)}R_{g}u$$
is the Conformal Laplacian. The operator $\mathcal{L}_{g}$ obeys the following relation concerning conformal changes of the metric
\begin{equation*}\label{eq63}
\mathcal{L}_{v^{4/(n-2)}g}u=v^{-\frac{n+2}{n-2}}\mathcal{L}_{g}(vu).
\end{equation*}

The method of finding solutions to (\ref{eq28}) used in this work is to linearize about a function $u_0$, not necessarily a solution. Expanding $H_{g}$ about $u_0$ gives
$$H_{g}(u_0+u)=H_{g}(u_0)+L_{g}^{u_0}(u)+Q^{u_0}(u),$$
where the non-linear remainder term $Q^{u_0}(u)$ is independent of the metric, and given by
%
%
\begin{equation}\label{eq44}
Q^{u_0}(u) = \displaystyle\frac{n(n+2)}{4}u\int_0^1\left(|u_0+tu|^{\frac{4}{n-2}}- u_0^{\frac{4}{n-2}}\right)dt.
\end{equation}

It is important to emphasize here that in this work $(M^n,{g_0})$ always will be a compact Riemannian manifold of dimension $n\geq 3$ with constant scalar curvature $n(n-1)$ and nondegenerate about the constant function 1. This implies that (\ref{eq50}) is equal to
$$H_{g}(u)=\Delta_gu-\frac{n(n-2)}{4}u+\frac{n(n-2)}{4}|u|^{\frac{4}{n-2}}u$$
and the operator $L_{g_0}^1:C^{2,\alpha}(M)\rightarrow C^{0,\alpha}(M)$ given by
\begin{equation}\label{eq29}
L_{g_0}^1(v)=\Delta_{g_0}v+nv,
\end{equation}
is surjective for some $\alpha\in(0,1)$, see Definition \ref{def3}.
%
%
\subsection{Delaunay-type solutions}\label{sec08}
In Section \ref{sec02} we will construct a family of singular solutions to the Yamabe Problem in the punctured ball of radius $r$ centered at $p$, $B_r(p)\backslash\{p\}\subset M$, conformal to the metric ${g_0}$, with prescribed high eigenmode boundary data at $\partial B_r(p)$. It is natural to require that the solution is asymptotic to a {\it Delaunay-type solution}, called by some authors {\it Fowler solutions}. We recall some well known facts about the Delaunay-type solutions that will be used extensively in the rest of the work. See \cite{MP} and \cite{MPU} for facts not proved here.

If $g=u^{\frac{4}{n-2}}\delta$ is a complete metric in $\mathbb{R}^n\backslash\{0\}$ with constant scalar curvature $R_g=n(n-1)$ conformal to the Euclidean standard metric $\delta$ on $\mathbb{R}^n$, then $u(x)\rightarrow\infty$ when $x\rightarrow 0$ and $u$ is a solution of the equation
\begin{equation}\label{eq01}
H_\delta(u)=\Delta u+\frac{n(n-2)}{4}u^{\frac{n+2}{n-2}}=0
\end{equation}
in $\mathbb{R}^n\backslash\{0\}$. It is well known that $u$ is rotationally invariant (see \cite{CGS}, Theorem 8.1), and thus the equation it satisfies may be reduced to an ordinary differential equation. 

Therefore, if we define $v(t):=e^{\frac{2-n}{2}t}u(e^{-t}\theta)=|x|^{\frac{n-2}{2}}u(x)$, where $t=-\log |x|$ and $\theta=\frac{x}{|x|}$, then we get that
\begin{equation}\label{eq68}
v''-\frac{(n-2)^2}{4}v+\frac{n(n-2)}{4}v^{\frac{n+2}{n-2}}=0.
\end{equation}

Because of their similarity with the CMC surfaces of revolution
 discovered by {\it Delaunay} a solution of this ODE is called Delaunay-type solution.

Setting $w:=v'$ this equation is transformed into a first order Hamiltonian system
$$\left\{\begin{array}{ccl}
\displaystyle v' & = & w\\
\displaystyle w' & = & \displaystyle\frac{(n-2)^2}{4}v-\displaystyle\frac{n(n-2)}{4}v^{\frac{n+2}{n-2}}
\end{array}\right.,$$
whose Hamiltonian energy, given by
\begin{equation}\label{eq94}
H(v,w)=w^2-\frac{(n-2)^2}{4}v^2+\frac{(n-2)^2}{4}v^{\frac{2n}{n-2}},
\end{equation}
is constant along solutions of (\ref{eq68}). We summarize the basic properties of this solutions in the next proposition (see Proposition 1 in \cite{MP}).
%
%
\begin{proping}\label{propo06}
For any $H_0\in(-((n-2)/n)^{n/2}(n-2)/2,0 )$, there exists a unique bounded solution of (\ref{eq68}) satisfying $H(v,v')=H_0$, $v'(0)=0$ and $v''(0)>0$. This solution is periodic, and for all $t\in\mathbb{R}$ we have $v(t)\in(0,1)$. This solution can be indexed by the parameter $\varepsilon=v(0)\in(0,((n-2)/n)^{(n-2)/4})$, which is the smaller of the two values $v$ assumes when $v'(0)=0$. When $H_0=-((n-2)/n)^{n/2}(n-2)/2$, there is a unique bounded solution of (\ref{eq68}), given by
$v(t)=((n-2)/n)^{(n-2)/4}$. Finally, if $v$ is a solution with $H_0=0$ then either $v(t)=(\cosh(t-t_0))^{(2-n)/2}$ for some $t_0\in\mathbb{R}$ or $v(t)=0$.
\end{proping}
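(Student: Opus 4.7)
The plan is to analyze the second-order autonomous ODE (\ref{eq68}) as a one-dimensional conservative Hamiltonian system and read all three cases off the phase portrait. Writing $H(v,v')=(v')^{2}+V(v)$ with
$$V(v)=-\frac{(n-2)^{2}}{4}v^{2}+\frac{(n-2)^{2}}{4}v^{\frac{2n}{n-2}},$$
the equation (\ref{eq68}) reads $v''=-\tfrac{1}{2}V'(v)$ and $H$ is constant along solutions. A direct computation gives $V'(v)=\tfrac{n-2}{2}v\bigl(nv^{4/(n-2)}-(n-2)\bigr)$, so on $[0,\infty)$ the potential $V$ has a local maximum at $v=0$ with $V(0)=0$ and a single local minimum at $v_{\ast}:=((n-2)/n)^{(n-2)/4}$ with
$$V_{\min}:=V(v_{\ast})=-\frac{n-2}{2}\Bigl(\frac{n-2}{n}\Bigr)^{n/2};$$
moreover $V(1)=0$, $V$ is strictly decreasing on $(0,v_{\ast})$, strictly increasing on $(v_{\ast},\infty)$, and $V(v)\to+\infty$ as $v\to\infty$.

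For the generic case $H_{0}\in(V_{\min},0)$, the equation $V(v)=H_{0}$ has exactly two positive roots $v_{-}(H_{0})\in(0,v_{\ast})$ and $v_{+}(H_{0})\in(v_{\ast},1)$, and the level set $\{H=H_{0}\}$ inside the half-plane $\{v>0\}$ is a simple closed curve surrounding the center $(v_{\ast},0)$. Existence, uniqueness, and conservation of $H$ then show that each trajectory on this curve is periodic and its $v$-coordinate oscillates between $v_{-}$ and $v_{+}$, hence $v(t)\in[v_{-},v_{+}]\subset(0,1)$. Imposing $v'(0)=0$ puts $v(0)$ at one of the two turning points, and the relation $v''=-V'(v)/2$ together with $v''(0)>0$ forces $V'(v(0))<0$, which singles out $v(0)=v_{-}(H_{0})=:\varepsilon$. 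Cauchy-problem uniqueness then yields the unique solution in the claim, and because $v_{-}(H_{0})$ depends continuously and monotonically on $H_{0}$, as $H_{0}$ sweeps $(V_{\min},0)$ the parameter $\varepsilon$ sweeps $(0,((n-2)/n)^{(n-2)/4})$.

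For the two degenerate energies I proceed separately. When $H_{0}=V_{\min}$, the level set in $\{v>0\}$ collapses to the equilibrium $(v_{\ast},0)$, so the only bounded solution is the constant $v\equiv v_{\ast}$. When $H_{0}=0$, the relation $(v')^{2}=-V(v)$ is meaningful exactly on $v\in[0,1]$; the associated orbit structure consists of the saddle at the origin (giving the trivial solution $v\equiv 0$) together with a homoclinic loop reaching up to $v=1$. On the loop, separating variables in
$$v'=\pm\frac{n-2}{2}\,v\sqrt{1-v^{4/(n-2)}}$$
and applying the substitution $v=(\cosh s)^{-(n-2)/2}$ produces $s=\pm(t-t_{0})$, giving the explicit formula $v(t)=(\cosh(t-t_{0}))^{(2-n)/2}$, which parametrizes every non-trivial bounded orbit at energy zero.

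The only step that really needs care is the qualitative claim that for each $H_{0}\in(V_{\min},0)$ the level set in $\{v>0\}$ is a closed curve staying bounded away from $v=0$ and from infinity, so that all solutions of interest are periodic with $v\in(0,1)$. Once this is established from the shape of $V$ described above, the rest is a direct consequence of conservation of $H$, uniqueness of ODE solutions, and the elementary integration carried out in the homoclinic case.
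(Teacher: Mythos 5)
Your phase-plane analysis is correct and complete: the computation of the potential $V$, the identification of its local maximum at $0$, its minimum $V_{\min}=-\tfrac{n-2}{2}\bigl(\tfrac{n-2}{n}\bigr)^{n/2}$ at $v_*=\bigl(\tfrac{n-2}{n}\bigr)^{(n-2)/4}$, the periodicity argument from compact regular level sets, the selection of $v(0)=v_-$ via $v''(0)=-\tfrac12 V'(v(0))>0$, the monotone continuous dependence $\varepsilon=v_-(H_0)=V^{-1}(H_0)$ sweeping $(0,v_*)$, and the explicit homoclinic integration at $H_0=0$ all check out. The paper itself supplies no proof for this proposition and simply cites Proposition~1 of Mazzeo--Pacard \cite{MP}; your conservation-of-energy argument is the standard one underlying that reference, so this is a welcome self-contained substitute rather than a divergent route.
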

We will write the solution of (\ref{eq68}) given by Proposition \ref{propo06} as $v_\varepsilon$, where $v_{\varepsilon}(0)=\min v_{\varepsilon}=\varepsilon\in(0,((n-2)/n)^{(n-2)/4})$ and the corresponding solution of (\ref{eq01}) as $u_\varepsilon(x)=|x|^{(2-n)/2}v_\varepsilon(-\log|x|)$.

Although we do not know them explicitly, the next proposition gives sufficient information about their behavior as $\varepsilon$ tends to zero for our  purposes.
%
%
\begin{proping}\label{propo05}
For any $\varepsilon\in(0,((n-2)/n)^{(n-2)/4})$ and any $x\in\mathbb{R}^n\backslash\{0\}$ with $|x|\leq 1$, the Delaunay-type solution $u_\varepsilon(x)$ satisfies the estimates
$$\left|u_\varepsilon(x)-\frac{\varepsilon}{2}(1+|x|^{2-n})\right|\leq c_n\varepsilon^{\frac{n+2}{n-2}}|x|^{-n},$$
$$\left||x|\partial_ru_\varepsilon(x)+\frac{n-2}{2}\varepsilon|x|^{2-n}\right|\leq c_n\varepsilon^{\frac{n+2}{n-2}}|x|^{-n}$$
and
$$\left||x|^2\partial_r^2u_\varepsilon(x)-\frac{(n-2)^2}{2}\varepsilon|x|^{2-n}\right|\leq c_n\varepsilon^{\frac{n+2}{n-2}}|x|^{-n},$$
for some positive constant $c_n$ that depends only on $n$.
\end{proping}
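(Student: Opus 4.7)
The substitution $v_\varepsilon(t) = |x|^{(n-2)/2} u_\varepsilon(x)$, $t=-\log|x|$, turns the three claimed estimates into the single set of ODE estimates
$$\bigl|v_\varepsilon(t)-\varepsilon\cosh(\lambda t)\bigr|,\;\bigl|v_\varepsilon'(t)-\varepsilon\lambda\sinh(\lambda t)\bigr|,\;\bigl|v_\varepsilon''(t)-\varepsilon\lambda^2\cosh(\lambda t)\bigr|\;\lesssim\;\varepsilon^{(n+2)/(n-2)}e^{(n+2)t/2},\quad t\geq 0,$$
where $\lambda=(n-2)/2$. The idea is that for small $\varepsilon$ the nonlinear term $\tfrac{n(n-2)}{4}v^{(n+2)/(n-2)}$ in (\ref{eq68}) is negligible next to $\lambda^2 v$, so $v_\varepsilon$ should be very close to the solution $\varepsilon\cosh(\lambda t)$ of the linearized equation $v''=\lambda^2 v$ satisfying $v(0)=\varepsilon$, $v'(0)=0$; the function $\tfrac{\varepsilon}{2}(1+|x|^{2-n})$ in the proposition is precisely $|x|^{-\lambda}\varepsilon\cosh(\lambda t)$ back in the original variables.

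\textbf{Step 1 (Integral formulation).} Since $v_\varepsilon(0)=\varepsilon$ is the minimum of $v_\varepsilon$, one has $v_\varepsilon'(0)=0$. Variation of parameters applied to $v''-\lambda^2 v=-\tfrac{n(n-2)}{4}v^{(n+2)/(n-2)}$ gives
$$v_\varepsilon(t)=\varepsilon\cosh(\lambda t)-\frac{n(n-2)}{4\lambda}\int_0^t\sinh\bigl(\lambda(t-s)\bigr)\,v_\varepsilon(s)^{(n+2)/(n-2)}\,ds.$$
Differentiating gives the analogous formula for $v_\varepsilon'$, and $v_\varepsilon''$ is recovered algebraically from the ODE itself.

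\textbf{Step 2 (Bootstrap).} Let $T_\varepsilon=\tfrac{2}{n-2}\log(1/\varepsilon)$. I would run a continuity argument: let $T^*$ be the largest $T\leq T_\varepsilon$ on which $v_\varepsilon(s)\leq 2\varepsilon\cosh(\lambda s)$ for all $s\in[0,T]$. On $[0,T^*]$ the integral term is bounded, using $\cosh(\lambda s)^{(n+2)/(n-2)}\leq C e^{(n+2)\lambda s/(n-2)}=Ce^{(n+2)s/2-s}$, by
$$C\varepsilon^{(n+2)/(n-2)}\int_0^t\sinh\!\bigl(\lambda(t-s)\bigr)e^{(n+2)s/2}\,ds\;=\;O\!\bigl(\varepsilon^{(n+2)/(n-2)}e^{(n+2)t/2}\bigr),$$
where the key point is that $\lambda+(\tfrac{n+2}{2}-\lambda)=\tfrac{n+2}{2}$ so the dominant exponent after integration is $(n+2)t/2$. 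For $t\leq T_\varepsilon$ the ratio of this correction to $\varepsilon\cosh(\lambda t)$ is $O(\varepsilon^{4/(n-2)}e^{2t})\leq O(1)$, and actually $\leq 1/2$ for $\varepsilon$ small, so the bootstrap assumption improves strictly, forcing $T^*=T_\varepsilon$. Plugging the bound back into the integral representation yields $|v_\varepsilon(t)-\varepsilon\cosh(\lambda t)|\leq C\varepsilon^{(n+2)/(n-2)}e^{(n+2)t/2}$ throughout $[0,T_\varepsilon]$.

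\textbf{Step 3 (Large $t$, derivatives, and conversion).} For $t>T_\varepsilon$ the right-hand side $C\varepsilon^{(n+2)/(n-2)}e^{(n+2)t/2}$ already exceeds $1$ at $t=T_\varepsilon$ and grows with rate $(n+2)/2$, while Proposition \ref{propo06} bounds $v_\varepsilon\leq 1$ and $\varepsilon\cosh(\lambda t)\leq\varepsilon e^{\lambda t}$ grows only at rate $\lambda<(n+2)/2$, so the estimate holds trivially. The $v_\varepsilon'$ bound follows by inserting the $v_\varepsilon$ estimate into the differentiated integral equation, and the $v_\varepsilon''$ bound is obtained from the ODE plus the $v_\varepsilon$ bound. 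Finally, the chain rule gives
$$r\partial_r u_\varepsilon=-r^{-\lambda}(\lambda v_\varepsilon+v_\varepsilon'),\qquad r^2\partial_r^2 u_\varepsilon=r^{-\lambda}\bigl(\lambda(\lambda+1)v_\varepsilon+(2\lambda+1)v_\varepsilon'+v_\varepsilon''\bigr),$$
and using $\cosh(\lambda t)\pm\sinh(\lambda t)=e^{\pm\lambda t}=r^{\mp\lambda}$ one converts the three ODE estimates into the three claimed bounds in the proposition.

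\textbf{Main obstacle.} The bootstrap in Step 2 is the crux: the interval $[0,T_\varepsilon]$ grows like $\log(1/\varepsilon)$, so one cannot invoke a fixed-interval perturbation theorem; the quantitative computation of the integral and the precise matching of exponents (the exponent $(n+2)/2=\lambda+2$ is exactly what produces $|x|^{-n}$ on the right-hand side of all three estimates) must be carried out carefully, and the threshold $T_\varepsilon$ is exactly the point beyond which the linear approximation ceases to dominate.
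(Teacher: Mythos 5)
The paper's own proof of Proposition~\ref{propo05} is simply a reference to \cite{MP}, so there is nothing in the paper to compare against; your write-up supplies a self-contained argument, and the overall route --- pass to $v_\varepsilon(t)=|x|^{(n-2)/2}u_\varepsilon(x)$, note $\tfrac{\varepsilon}{2}(1+|x|^{2-n})=|x|^{(2-n)/2}\varepsilon\cosh(\lambda t)$ with $\lambda=\tfrac{n-2}{2}$, write the Volterra identity by variation of parameters, estimate the integral, convert back --- is the right one, with the exponent $\tfrac{n+2}{2}=\lambda+2$ correctly accounting for the $|x|^{-n}$ weight.

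Step~2 has one genuine gap. You assert the ratio of the correction to $\varepsilon\cosh(\lambda t)$ is $\leq\tfrac12$ on all of $[0,T_\varepsilon]$ for small $\varepsilon$, closing the bootstrap with $T^*=T_\varepsilon$. But at $t=T_\varepsilon=\tfrac{2}{n-2}\log(1/\varepsilon)$ the powers of $\varepsilon$ cancel exactly ($\varepsilon^{4/(n-2)}e^{2T_\varepsilon}=1$), and the ratio equals an $\varepsilon$-independent constant of order $\tfrac{n}{4}\cdot 2^{(n+2)/(n-2)}$, which exceeds~$1$ for every $n\geq 3$; the continuity argument therefore breaks before $T_\varepsilon$. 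Fortunately the bootstrap is superfluous: the integral term $\tfrac{n(n-2)}{4\lambda}\int_0^t\sinh(\lambda(t-s))\,v_\varepsilon(s)^{\frac{n+2}{n-2}}\,ds$ is manifestly nonnegative for $t\geq 0$, so your own identity already gives $v_\varepsilon(t)\leq\varepsilon\cosh(\lambda t)$ with no hypotheses (alternatively, one can invoke the bound $v_\varepsilon(t)\leq\varepsilon e^{\lambda|t|}$, which the paper uses in Corollary~\ref{cor02} and which follows from conservation of the Hamiltonian $H(v,v')$). Substituting $v_\varepsilon(s)\leq\varepsilon\cosh(\lambda s)$ directly and carrying out your exponent computation --- in which, incidentally, $\cosh(\lambda s)^{(n+2)/(n-2)}\leq e^{(n+2)s/2}$, not $e^{(n+2)s/2-s}$ --- yields $|v_\varepsilon(t)-\varepsilon\cosh(\lambda t)|\leq c_n\varepsilon^{(n+2)/(n-2)}e^{(n+2)t/2}$ for all $t\geq 0$ in one shot, eliminating both the continuity argument and the case split at $T_\varepsilon$; Step~3 then goes through as written.

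One further remark: carrying Step~3 to the end for the second derivative produces the leading coefficient $\tfrac{(n-2)(n-1)}{2}$, not the printed $\tfrac{(n-2)^2}{2}$ --- indeed $|x|^2\partial_r^2\bigl(\tfrac{\varepsilon}{2}(1+|x|^{2-n})\bigr)=\tfrac{(n-2)(n-1)}{2}\varepsilon|x|^{2-n}$ exactly, and the difference $\tfrac{n-2}{2}\varepsilon|x|^{2-n}$ is not absorbed by $c_n\varepsilon^{(n+2)/(n-2)}|x|^{-n}$ when $|x|$ is of order one. The coefficient in the statement as printed appears to be in error; your approach produces the correct one.
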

%
%
\begin{proof} See \cite{MP}.
\end{proof}
As indicate by Mazzeo-Pacard \cite{MP}, there are some important variations of these solutions, leading to a $(2n+2)-$dimensional family of Delaunay-type solutions. For our purpose, it is enough to consider the family of solutions where only translations along Delaunay axis and of the "point at infinity" are allowed. Therefore, we will work with the following family of solutions of (\ref{eq01})
\begin{equation}\label{eq52}
u_{\varepsilon,R,a}(x):= |x-a|x|^2|^{\frac{2-n}{2}}v_{\varepsilon}(-2\log|x|+\log|x-a|x|^2|+\log R).
\end{equation}
See \cite{MP} for details.

In Section \ref{sec02} we will find solutions to the singular Yamabe problem in the punctured ball $B_r(p)\backslash\{p\}$ only with prescribed high eigenmode Dirichlet data, so we need other parameters to control the low eigenmode. The parameters $a\in\mathbb{R}^n$ and $R\in\mathbb{R}^+$ in (\ref{eq52}) will allow us to have control over the low eigenmode. The first corollary is a direct consequence of (\ref{eq52}) and it will control the space spanned by the coordinates functions, and the second one follows from Proposition \ref{propo05} and it will control the space spanned by the constant functions in the sphere.

\noindent{\bf Notation:} We write $f=O'(Kr^k)$ to mean $f=O(Kr^{k})$ and $\nabla f=O(Kr^{k-1})$, for $K>0$ constant. $O''$ is defined similarly.
%
%
\begin{coring}\label{cor02}
There exists a constant $r_0\in(0,1)$, such that for any $x$ and $a$ in $\mathbb{R}^n$ with $|x|\leq 1$, $|a||x|<r_0$, $R\in\mathbb{R}^+$, and  $\varepsilon\in(0,((n-2)/n)^{(n-2)/4})$ the solution $u_{\varepsilon,R,a}$ satisfies the estimates
%
%
\begin{equation}\label{eq37}
u_{\varepsilon,R,a}(x) = u_{\varepsilon,R}(x)+((n-2)u_{\varepsilon,R}(x)+ |x| \partial_ru_{\varepsilon,R}(x))a\cdot x+  O''(|a|^2|x|^{\frac{6-n}{2}})
\end{equation}
and
\begin{equation}\label{eq93}
u_{\varepsilon,R,a}(x) = u_{\varepsilon,R}(x)+((n-2)u_{\varepsilon,R}(x)+ |x| \partial_ru_{\varepsilon,R}(x))a\cdot x+ O''(|a|^2\varepsilon R^{\frac{2-n}{2}}|x|^2)
\end{equation}
if $R\leq |x|$.
\end{coring}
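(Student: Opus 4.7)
The plan is a second-order Taylor expansion of $u_{\varepsilon,R,a}(x)$ in the parameter $a$ about $a=0$. For fixed $x,R,\varepsilon$ one has $u_{\varepsilon,R,a}(x) = F(x - a|x|^2)$ with $F(y) := |y|^{(2-n)/2} v_\varepsilon(-2\log|x| + \log|y| + \log R)$, and $F$ depends on $y$ only through $|y|$. A useful consequence of the evenness $v_\varepsilon(-t) = v_\varepsilon(t)$ (which follows from $v_\varepsilon'(0)=0$ in Proposition \ref{propo06}) is the identity $F(y) = u_{\varepsilon,P}(y)$ with $P := |x|^2/R$: $F$ is itself a Delaunay-type solution, a fact that will drive the sharp bound (\ref{eq93}).

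Applying Taylor's theorem to $G(s) := F(x - sa|x|^2)$, $s\in[0,1]$, gives $u_{\varepsilon,R,a}(x) = G(0) + G'(0) + \int_0^1 (1-s)G''(s)\,ds$. Clearly $G(0) = F(x) = u_{\varepsilon,R}(x)$. Since $F$ is radial, $G'(0) = -|x|f'(|x|)(a\cdot x)$ with $f(\rho) := \rho^{(2-n)/2}v_\varepsilon(\log(R\rho/|x|^2))$. A direct computation yields $|x|f'(|x|) = \tfrac{2-n}{2}u_{\varepsilon,R}(x) + |x|^{(2-n)/2}v_\varepsilon'(\log(R/|x|))$, while differentiating the defining expression $u_{\varepsilon,R}(x) = |x|^{(2-n)/2}v_\varepsilon(\log(R/|x|))$ gives $|x|\partial_r u_{\varepsilon,R}(x) = \tfrac{2-n}{2}u_{\varepsilon,R}(x) - |x|^{(2-n)/2}v_\varepsilon'(\log(R/|x|))$. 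Eliminating $v_\varepsilon'$ between these two relations produces $|x|f'(|x|) = (2-n)u_{\varepsilon,R}(x) - |x|\partial_r u_{\varepsilon,R}(x)$ and hence the linear term
\[G'(0) = \bigl[(n-2)u_{\varepsilon,R}(x) + |x|\partial_r u_{\varepsilon,R}(x)\bigr](a\cdot x),\]
which is precisely the one appearing in (\ref{eq37}) and (\ref{eq93}).

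To prove (\ref{eq37}), I choose $r_0$ small enough that $|a||x| < r_0$ forces $|y_s| \sim |x|$ uniformly for $s\in[0,1]$, where $y_s := x - sa|x|^2$. Since $F$ is radial, $|\mathrm{Hess}_y F(y_s)| \lesssim |f''(|y_s|)| + |y_s|^{-1}|f'(|y_s|)|$; using the Delaunay ODE (\ref{eq68}) to express $v_\varepsilon''$ in closed form and the uniform pointwise bounds on $v_\varepsilon$ and $v_\varepsilon'$ coming from the conservation law (\ref{eq94}) and Proposition \ref{propo06}, one obtains $|\mathrm{Hess}_y F(y_s)| \leq C|x|^{-(n+2)/2}$ with $C$ independent of $\varepsilon,R$. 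The Taylor remainder is therefore bounded by $C|a|^2|x|^4\cdot|x|^{-(n+2)/2} = C|a|^2|x|^{(6-n)/2}$, and the $O''$ strengthening follows by differentiating the remainder once more in $x$ and invoking the analogous pointwise bound on third derivatives.

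For (\ref{eq93}), the extra hypothesis $R\leq|x|$ ensures $|y_s|/P \leq 1$ (after possibly shrinking $r_0$), so the asymptotics of Proposition \ref{propo05} apply directly to $u_\varepsilon$ at $\xi := y_s/P$. Unpacking the scaling $u_{\varepsilon,P}(y) = P^{(2-n)/2}u_\varepsilon(y/P)$, the powers of $P = |x|^2/R$ combine with the $\varepsilon|\xi|^{2-n}$ leading term from Proposition \ref{propo05} to yield $|y_s|^2|\partial_r^2 u_{\varepsilon,P}(y_s)| \leq C\varepsilon R^{(2-n)/2}$, hence $|\mathrm{Hess}_y F(y_s)| \leq C\varepsilon R^{(2-n)/2}|x|^{-2}$, and the Taylor remainder becomes $O''(|a|^2 \varepsilon R^{(2-n)/2}|x|^2)$ as claimed. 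The main technical obstacle is bookkeeping with the error term $\varepsilon^{(n+2)/(n-2)}|\xi|^{-n}$ from Proposition \ref{propo05}: after being weighted by the scaling factor $P^{(2-n)/2}$ with $P = |x|^2/R$, one must show its contribution is absorbed by the leading $\varepsilon R^{(2-n)/2}$ provided $\varepsilon$ is sufficiently small.
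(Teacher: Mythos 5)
Your overall strategy---a second-order Taylor expansion of $u_{\varepsilon,R,a}$ in $a$ about $a=0$---is the same one the paper uses, merely packaged as a Hessian computation on the radial function $F$ rather than as a Taylor expansion of $v_\varepsilon$ in its argument, and your derivation of the linear coefficient $(n-2)u_{\varepsilon,R}+|x|\partial_ru_{\varepsilon,R}$ is correct. Your observation that $F=u_{\varepsilon,P}$ with $P=|x|^2/R$, via the evenness of $v_\varepsilon$, is also valid. For (\ref{eq37}) the uniform bounds $|v_\varepsilon'|,|v_\varepsilon''|\leq c_n v_\varepsilon\leq c_n$ (from the conservation law (\ref{eq94}) and the ODE (\ref{eq68})) give the Hessian bound of order $|x|^{-(n+2)/2}$, and the argument goes through.

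The gap is in (\ref{eq93}), and it is precisely the obstacle you flag but do not resolve. After the scaling, the error term $\varepsilon^{(n+2)/(n-2)}|\xi|^{-n}$ in Proposition~\ref{propo05}, evaluated at $\xi=y_s/P$ with $|y_s|\sim|x|$, contributes to $|y_s|^2|\partial_r^2u_{\varepsilon,P}(y_s)|$ a term of size $P^{(n+2)/2}\varepsilon^{(n+2)/(n-2)}|y_s|^{-n}\sim|x|^2R^{-(n+2)/2}\varepsilon^{(n+2)/(n-2)}$. Its ratio to the claimed leading term $\varepsilon R^{(2-n)/2}$ is $(|x|/R)^2\varepsilon^{4/(n-2)}$, which is \emph{not} controlled by taking $\varepsilon$ small: the corollary only assumes $R\leq|x|$, so $|x|/R$ is unbounded at fixed $\varepsilon$, and the claimed $O''(|a|^2\varepsilon R^{(2-n)/2}|x|^2)$ bound would fail along this route. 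To close the proof one should bypass Proposition~\ref{propo05} altogether and instead bound $v_\varepsilon$ and its derivatives directly, as the paper does: combine $|v_\varepsilon'|,|v_\varepsilon''|\leq c_nv_\varepsilon$ with the pointwise estimate $v_\varepsilon(t)\leq\varepsilon e^{\frac{n-2}{2}|t|}$, which at the argument $-\log|y_s|+\log P\approx\log(|x|/R)\geq 0$ gives $v_\varepsilon\leq C\varepsilon(|x|/R)^{(n-2)/2}$ and hence $\|\mathrm{Hess}_yF(y_s)\|\leq C\varepsilon R^{(2-n)/2}|x|^{-2}$ with no extraneous error term.
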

\begin{proof} Using the Taylor's expansion we obtain that
$$ v_\varepsilon\left(-\log |x|+\log\left|\displaystyle \frac{x}{|x|}-a|x|\right|+\log R\right) = v_{\varepsilon}(-\log |x|+\log R)$$
$$\begin{array}{ll}
\hspace{3cm} - & v_\varepsilon'(-\log|x|+\log R)a\cdot x + v_\varepsilon'(-\log|x|+\log R)O''(|a|^2|x|^2)\\
\\
\hspace{3cm} + & v_\varepsilon''(-\log|x|+\log R+t_{a,x})O''(|a|^2|x|^2)
\end{array}$$
for some $t_{a,x}\in\mathbb{R}$ with $0<|t_{a,x}|<\left|\log\left|\frac{x}{|x|}-a|x|\right|\right|$, since

$$\log\left|\displaystyle\frac{x}{|x|}-a|x|\right|= -a\cdot x+O''(|a|^2|x|^2).$$
for $|a||x|<r_0$ and some $r_0\in(0,1)$. Observe that $t_{a,x}\rightarrow 0$ as $|a||x|\rightarrow 0$.

Now, by the equation (\ref{eq68}) and the fact that 
$$H(v_\varepsilon,v_\varepsilon')=\frac{(n-2)^2}{4} \varepsilon^2(\varepsilon^{\frac{n+2}{n-2}}-1),$$
where $H$ is defined in (\ref{eq94}), it follows that $|v'_\varepsilon|\leq c_nv_\varepsilon$,  $|v''_\varepsilon|\leq c_nv_\varepsilon$, for some constant $c_n$ that depends only on $n$. 

Since $-\log|x|+\log R\leq 0$ if $R\leq |x|$, and $v_\varepsilon(t)\leq \varepsilon e^{\frac{n-2}{2}|t|}$, for all $t\in\mathbb{R}$ (see \cite{MP}), we obtain that
$$v_\varepsilon(-\log|x|+\log R)\leq \varepsilon R^{\frac{2-n}{2}}|x|^{\frac{n-2}{2}}$$
and
$$v_\varepsilon(-\log|x|+\log R+t_{a,x})\leq c\varepsilon R^{\frac{2-n}{2}}|x|^{\frac{n-2}{2}},$$
for some constant $c>0$ that does not depend on $x$, $\varepsilon$, $R$ and $a$.

Therefore, from (\ref{eq52}), $0<v_\varepsilon(t)\leq 1$ and
$$|x-a|x|^2|^{\frac{2-n}{2}} = \displaystyle|x|^{\frac{2-n}{2}}+\frac{n-2}{2}a\cdot x|x|^{\frac{2-n}{2}}+O''(|a|^2|x|^{\frac{6-n}{2}})$$
for $|a||x|<r_0$ and some $r_0\in(0,1)$, we deduce the result.
\end{proof}
%
%
\begin{coring}\label{cor04}
For any $\varepsilon\in(0,((n-2)/n)^{(n-2)/4})$ and any $x$ in $\mathbb{R}^n$ with $|x|\leq 1$, the function $u_{\varepsilon,R}:=u_{\varepsilon,R,0}$ satisfies the estimates
%
%
$$ u_{\varepsilon,R}(x) = \displaystyle\frac{\varepsilon}{2} \left(R^{\frac{2-n}{2}}+ R^{\frac{n-2}{2}}|x|^{2-n}\right)+ O''(R^{\frac{n+2}{2}}\varepsilon^{\frac{n+2}{n-2}}|x|^{-n}),$$
$$|x|\partial_ru_{\varepsilon,R}(x)=\frac{2-n}{2}\varepsilon R^{\frac{n-2}{2}} |x|^{2-n}+ O'(R^{\frac{n+2}{2}}\varepsilon^{\frac{n+2}{n-2}}|x|^{-n})$$
and
$$|x|^2\partial_r^2u_{\varepsilon,R}(x)=\frac{(n-2)^2}{2}\varepsilon R^{\frac{n-2}{2}} |x|^{2-n}+ O(R^{\frac{n+2}{2}}\varepsilon^{\frac{n+2}{n-2}}|x|^{-n}).$$
\end{coring}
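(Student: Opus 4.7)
The plan is to derive all three estimates by scaling from Proposition \ref{propo05}. Setting $a=0$ in (\ref{eq52}) gives
$$u_{\varepsilon,R}(x)=|x|^{\frac{2-n}{2}}v_\varepsilon\bigl(-\log|x|+\log R\bigr),$$
and comparison with the definition $u_\varepsilon(y)=|y|^{\frac{2-n}{2}}v_\varepsilon(-\log|y|)$ yields the scaling identity
$$u_{\varepsilon,R}(x)=R^{\frac{2-n}{2}}\,u_\varepsilon(x/R),$$
which is simply the statement that equation (\ref{eq01}) is invariant under the conformal dilation $u(x)\mapsto R^{\frac{2-n}{2}}u(x/R)$.

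From here, the three bounds follow by evaluating the corresponding inequalities of Proposition \ref{propo05} at $y=x/R$, using $|y|^{2-n}=R^{n-2}|x|^{2-n}$ and $|y|^{-n}=R^n|x|^{-n}$, and multiplying through by $R^{\frac{2-n}{2}}$. The first inequality becomes
$$\Bigl|u_{\varepsilon,R}(x)-\tfrac{\varepsilon}{2}\bigl(R^{\frac{2-n}{2}}+R^{\frac{n-2}{2}}|x|^{2-n}\bigr)\Bigr|\le c_n R^{\frac{n+2}{2}}\varepsilon^{\frac{n+2}{n-2}}|x|^{-n},$$
which is the first claim. The derivative estimates go the same way after observing that the operators $|x|\partial_r$ and $|x|^2\partial_r^2$ commute with the dilation $x\mapsto x/R$, so that $|x|\partial_r u_{\varepsilon,R}(x)=R^{\frac{2-n}{2}}(|y|\partial_r u_\varepsilon)(x/R)$ and similarly for the second radial derivative; applying the second and third inequalities of Proposition \ref{propo05} and collecting powers of $R$ in the same way yields the remaining two assertions.

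The only point demanding care is the $O''$ and $O'$ bookkeeping, which encodes control of first and second derivatives of the error. Because $u_\varepsilon$ (and hence $u_{\varepsilon,R}$) is radially symmetric, the gradient and Hessian of a radial error $E(r)$ are determined by $\partial_r E$, $\partial_r^2 E$ and $r^{-1}\partial_r E$, all of which are controlled by the three radial bounds supplied by Proposition \ref{propo05}; these controls are preserved by the dilation, so the $C^1$ and $C^2$ information in the $O'$ and $O''$ symbols transfers correctly from the unscaled to the scaled estimates. A secondary issue is that Proposition \ref{propo05} is phrased for $|y|\le 1$, which under $y=x/R$ corresponds to $|x|\le R$; since $v_\varepsilon$ is periodic and bounded by Proposition \ref{propo06}, the expansion underlying Proposition \ref{propo05} propagates with the same leading terms and error form across the range needed here. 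Apart from these bookkeeping checks, the proof is a routine change of variables and requires no further analytic input.
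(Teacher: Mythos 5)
Your proof follows exactly the same route as the paper's own one-line proof, which reads ``Use Proposition \ref{propo05} and the fact that $u_{\varepsilon,R}(x)=R^{\frac{2-n}{2}}u_\varepsilon(R^{-1}x)$''; your scaling identity and change of variables in each of the three estimates of Proposition \ref{propo05} are precisely this. You additionally flag the domain issue ($|x/R|\le 1$ versus $|x|\le 1$) that the paper's terse proof does not explicitly address, but your overall argument is the same.
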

\begin{proof} Use Proposition \ref{propo05} and the fact that $u_{\varepsilon,R}(x)=R^{\frac{2-n}{2}}u_\varepsilon(R^{-1}x)$.
\end{proof}
%
%
\subsection{Function spaces}\label{sec09}
Now, we will define some function spaces that we will use in this work. The first one is the weighted H\"older spaces in the punctured ball. They are the most convenient spaces to define the linearized operator. The second one appears so naturally in our results that it is more helpful to put its definition here. Finally, the third one is the weighted H\"older spaces in which the exterior analysis will be carried out. These are essentially the same weighted spaces as in \cite{M}, \cite{JP} and \cite{MP}.
%
%
\begin{definition}\label{def1}
For each $k\in\mathbb{N}$, $r>0$, $0<\alpha<1$ and $\sigma\in(0,r/2)$, let $u\in C^k(B_r(0)\backslash\{0\})$, set
$$\|u\|_{(k,\alpha),[\sigma,2\sigma]}=\sup_{|x|\in[\sigma,2\sigma]}\left(\sum_{j=0}^{k}\sigma^j |\nabla^ju(x)|\right)+\sigma^{k+\alpha}\sup_{|x|,|y|\in[\sigma,2\sigma]} \frac{|\nabla^ku(x)-\nabla^ku(y)|}{|x-y|^{\alpha}}.$$
Then, for any $\mu\in\mathbb{R}$, the space $C^{k,\alpha}_{\mu}(B_r(0)\backslash\{0\})$ is the collection of functions $u$ that are locally in $C^{k,\alpha}(B_r(0)\backslash\{0\})$ and for which the norm
$$\|u\|_{(k,\alpha),\mu,r}=\sup_{0<\sigma\leq\frac{r}{2}}\sigma^{-\mu} \|u\|_{(k,\alpha),[\sigma,2\sigma]}$$
is finite.
\end{definition}

The one result about these that we shall use frequently, and without comment, is that to check if a function $u$ is an element of some $C_{\mu}^{0,\alpha}$, say, it is sufficient to check that $|u(x)|\leq C|x|^\mu$ and $|\nabla u(x)|\leq C|x|^{\mu-1}$. In particular, the function $|x|^{\mu}$ is in $C_{\mu}^{k,\alpha}$ for any $k,$ $\alpha$, or $\mu$.

Note that $C_{\mu}^{k,\alpha}\subseteq C_{\delta}^{l,\alpha}$ if $\mu\geq\delta$ and $k\geq l$, and $\|u\|_{(l,\alpha),\delta}\leq C\|u\|_{(k,\alpha),\mu}$ for all $u\in C_{\mu}^{k,\alpha}$.
%
%
\begin{definition}\label{def2}
For each $k\in\mathbb{N}$, $0<\alpha<1$ and $r>0$. Let $\phi\in C^k(\mathbb{S}_r^{n-1})$, set
$$\|\phi\|_{(k,\alpha),r}:=\|\phi(r\cdot)\|_{C^{k,\alpha}(\mathbb{S}^{n-1})}.$$
Then, the space $C^{k,\alpha}(\mathbb{S}^{n-1}_r)$ is the collection of functions $\phi\in C^k(\mathbb{S}^{n-1}_r)$ for which the norm $\|\phi\|_{(k,\alpha),r}$ is finite.
\end{definition}

The next lemma show a relation between the norm of Definition \ref{def1} and \ref{def2}. To prove it use the decomposition of the function spaces in the sphere.
%
%
\begin{lemma}\label{lem07}
Let $\alpha\in(0,1)$ and $r>0$ be constants. Then, there exists a constant $c>0$ that does not depend on $r$, such that
%
%
\begin{equation}\label{eq43}
\|\pi_r''(u_r)\|_{(2,\alpha),r}\leq cK
\end{equation}
and
%
%
\begin{equation}\label{eq48}
\|r\partial_r\pi_r''(u_r)\|_{(1,\alpha),r}\leq cK,
\end{equation}
for all function $u:\{x\in\mathbb{R}^n;r/2\leq|x|\leq r\}\rightarrow\mathbb{R}$ satisfying $\|u\|_{(2,\alpha),[r/2,r]}\leq K,$ for some constant $K>0$. Here, $u_r$ is the restriction of $u$ to the sphere of radius $r$, $\mathbb{S}^{n-1}_r\subset\mathbb{R}^n$.
\end{lemma}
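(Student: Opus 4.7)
My plan is to reduce to the case $r=1$ by scaling, then exploit that $\pi''_r$ is the complement of projection onto a fixed finite-dimensional space of smooth functions (the span of constants and coordinate functions on $\mathbb{S}^{n-1}$).

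First, I would set $\tilde{u}(y) := u(ry)$ on the annulus $A := \{y \in \mathbb{R}^n : 1/2 \le |y| \le 1\}$. A direct bookkeeping of the factors of $r$ that appear when changing variables --- using $\nabla_y^k \tilde{u}(y) = r^k (\nabla^k u)(ry)$, and noting that the H\"older seminorm picks up an extra $r^\alpha$, both of which are exactly compensated by the weight $\sigma^{k}$ and $\sigma^{k+\alpha}$ in Definition \ref{def1} when $\sigma = r/2$ --- yields
$$\|\tilde{u}\|_{(2,\alpha),[1/2,1]} = \|u\|_{(2,\alpha),[r/2,r]}, \qquad \|\tilde{u}|_{\mathbb{S}^{n-1}}\|_{C^{2,\alpha}(\mathbb{S}^{n-1})} = \|u_r\|_{(2,\alpha),r}.$$
Since the spherical eigenmode decomposition is intrinsic to $\mathbb{S}^{n-1}$ and independent of $r$, the operator $\pi''_r$ commutes with this rescaling. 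It therefore suffices to establish both estimates in the case $r=1$.

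In that case, the complementary projection
$$\pi'_1(u_1)(\theta) = \sum_{j=0}^{n}\Bigl(\int_{\mathbb{S}^{n-1}} u_1 \, e_j\Bigr) e_j(\theta)$$
has range equal to the fixed $(n+1)$-dimensional space spanned by the constants and the restrictions to $\mathbb{S}^{n-1}$ of linear functions on $\mathbb{R}^n$. Each coefficient satisfies $|\int u_1 e_j|\le C\|u_1\|_{L^\infty(\mathbb{S}^{n-1})}\le C\|u\|_{C^{2,\alpha}(A)}$, and each $e_j$ is a fixed smooth function on $\mathbb{S}^{n-1}$, so $\|\pi'_1(u_1)\|_{C^{2,\alpha}(\mathbb{S}^{n-1})}\le C K$. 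The triangle inequality applied to $\pi''_1(u_1) = u_1 - \pi'_1(u_1)$ then gives (\ref{eq43}), and translating back by Step 1 yields the estimate for arbitrary $r$.

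For (\ref{eq48}), the key observation is that radial differentiation commutes with the spherical projection: writing $u_r(r\theta) = \sum_j u_j(r) e_j(\theta)$ with $u_j(r) = \int u(r\cdot)e_j$, one computes $r u_j'(r) = \int (r\partial_r u)(r\cdot)\,e_j$, hence
$$r\partial_r \pi''_r(u_r) = \pi''_r\bigl((r\partial_r u)_r\bigr).$$
It then suffices to apply the previous argument to $r\partial_r u$ in place of $u$, with one fewer derivative. The only thing to check is that $r\partial_r u = x\cdot\nabla u$ lies in the weighted space one order below: since $\nabla(x\cdot\nabla u) = \nabla u + (\nabla^2 u)\cdot x$, a short computation gives $\|r\partial_r u\|_{(1,\alpha),[r/2,r]}\le c\|u\|_{(2,\alpha),[r/2,r]}\le cK$, after which Steps 1--2 at the $C^{1,\alpha}$ level conclude the proof. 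The main obstacle is purely bookkeeping: verifying the exact scale-invariance of the weighted H\"older norm under $u\mapsto u(r\cdot)$. Once that is in hand, the uniform constant $c$ comes for free from the obvious boundedness of a finite-rank projector onto smooth spherical harmonics.
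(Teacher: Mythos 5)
Your proof is correct and fills in precisely what the paper's one-line hint ("use the decomposition of the function spaces in the sphere") leaves to the reader: the scaling reduction to $r=1$ (which is built into Definitions 2.1 and 2.2), the boundedness of the finite-rank projection $\pi'_1$ onto the span of $e_0,\dots,e_n$, and the commutation $r\partial_r\pi''_r(u_r)=\pi''_r((r\partial_r u)_r)$. This is essentially the paper's intended argument.
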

%
%
\begin{remark}
We often will write $\pi''(C^{k,\alpha}(\mathbb{S}_r^{n-1}))$ and $\pi''(C^{k,\alpha}_\mu(B_r(0)\backslash\{0\}))$ for 
$$\{\phi\in C^{k,\alpha}(\mathbb{S}_r^{n-1});\pi''_r(\phi)=\phi\}$$
and 
$$\left\{u\in C^{k,\alpha}_\mu(B_r(0) \backslash\{0\});\pi''_s(u(s\cdot))(\theta) = u(s\theta), \forall s\in(0,r)\mbox{ and }\forall\theta\in\mathbb{S}_r^{n-1}\right\},$$
respectively.
\end{remark}
Next, consider $(M,g)$ an $n-$dimensional compact Riemannian manifold and $\Psi:B_{r_1}(0)\rightarrow M$ some coordinate system on $M$ centered at some point $p\in M$, where $B_{r_1}(0)\subset\mathbb{R}^n$ is the ball of radius $r_1>0$.

For $0<r<s\leq r_1$ define
$$M_r:=M\backslash\Psi(B_r(0))\;\;\;\mbox{ and }\;\;\;\Omega_{r,s}:=\Psi(A_{r,s}),$$
where $A_{r,s}:=\{x\in\mathbb{R}^n;r\leq|x|\leq s\}$.
%
%
\begin{definition}\label{def4}
For all $k\in\mathbb{N}$, $\alpha\in(0,1)$ and $\nu\in\mathbb{R}$, the space $C_{\nu}^{k,\alpha}(M\backslash\{p\})$ is the space of functions $v\in C_{loc}^{k,\alpha}(M\backslash\{p\})$ for which the following norm is finite
$$\|v\|_{C_\nu^{k,\alpha}(M\backslash\{p\})}:= \|v\|_{C^{k,\alpha}(M_{\frac{1}{2}r_1})} +\|v\circ\Psi\|_{(k,\alpha),\nu,r_1},$$
where the norm $\|\cdot\|_{(k,\alpha),\nu,r_1}$ is the one defined in Definition \ref{def1}.
\end{definition}

For all $0<r<s\leq r_1$, we can also define the spaces $C_\mu^{k,\alpha}(\Omega_{r,s})$ and $C_\mu^{k,\alpha}(M_r)$ to be the space of restriction of elements of $C_\mu^{k,\alpha}(M\backslash\{p\})$ to $M_r$ and $\Omega_{r,s}$, respectively. These spaces is endowed with the following norm
$$\|f\|_{C^{k,\alpha}_\mu(\Omega_{r,s})}:=\sup_{r\leq \sigma\leq\frac{s}{2}}\sigma^{-\mu}\|f\circ \Psi\|_{(k,\alpha),[\sigma,2\sigma]}$$
and
$$\|h\|_{C^{k,\alpha}_\mu(M_r)}:=\|h\|_{C^{k,\alpha}(M_{\frac{1}{2}r_1})}+ \|h\|_{C^{k,\alpha}_\mu(\Omega_{r,r_1})}.$$
%
%
\subsection{The linearized operator}\label{sec01}
Let us fix one of the solutions of (\ref{eq01}), $u_{\varepsilon,R,a}$ given by (\ref{eq52}). Hence, $u_{\varepsilon,R,a}$ satisfies $H_\delta(u_{\varepsilon,R,a})=0$. The linearization of $H_\delta$ at $u_{\varepsilon,R,a}$ is defined by
\begin{equation}\label{eq38}
L_{\varepsilon,R,a}(v):=L_\delta^{u_{\varepsilon,R,a}}(v)=\Delta v+\frac{n(n+2)}{4}u_{\varepsilon,R,a}^{\frac{4}{n-2}}v,
\end{equation}
where $L_\delta^{u_{\varepsilon,R,a}}$ is given by (\ref{eq49}).

In \cite{MP}, Mazzeo and Pacard studied the operator $L_{\varepsilon,R}:=L_{\varepsilon,R,0}$ defined in weighted H\"older spaces. They showed that there exists a suitable right inverse with two important features, the corresponding right inverse has norm bounded independently of $\varepsilon$ and $R$ when the weight is chosen carefully, and the weight can be improved if the right inverse is defined in the high eigenmode. These properties will be fundamental in Section \ref{sec02}. To summarize, they establish the following result.
%
%
\begin{proping}[Mazzeo--Pacard, \cite{MP}]\label{propo01}
Let $R\in\mathbb{R}^+,$ $\alpha\in(0,1)$ and $\mu\in\left(1,2\right)$.
Then there exists $\varepsilon_0>0$ such that, for all $\varepsilon\in(0,\varepsilon_0]$, there is an operator
$$G_{\varepsilon,R}:C_{\mu-2}^{0,\alpha}(B_1(0)\backslash\{0\})\rightarrow C_{\mu}^{2,\alpha}(B_1(0)\backslash\{0\})$$
with the norm bounded independently of $\varepsilon$ and $R$, such that for $f\in C_{\mu-2}^{0,\alpha}(B_1(0)\backslash\{0\})$, the function $w:=G_{\varepsilon,R}(f)$ solves the equation
%
%
\begin{equation}\label{eq02}
\left\{\begin{array}{lcl}
L_{\varepsilon,R}(w)=f & \mbox{ in } & B_1(0)\backslash\{0\}\\
\pi''_1(w|_{\mathbb{S}^{n-1}})=0 & \mbox{ on } & \partial B_1(0)
\end{array}\right..
\end{equation}
Moreover, if $f\in\pi''(C^{0,\alpha}_{\mu-2}(B_1(0)\backslash\{0\}))$, then $w\in\pi''(C_\mu^{2,\alpha}(B_1(0)\backslash\{0\}))$ and we may take $\mu\in(-n,2)$.
\end{proping}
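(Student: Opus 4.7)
My plan is to conjugate $L_{\varepsilon,R}$ to a cylindrical operator via the substitution $v(x)=|x|^{(2-n)/2}w(t,\theta)$ with $t=-\log|x|$, $\theta=x/|x|$. A direct computation gives
$$\widetilde{L}_{\varepsilon,R}w = \partial_t^2 w + \Delta_{\mathbb{S}^{n-1}}w - \frac{(n-2)^2}{4}w + \frac{n(n+2)}{4}\,v_\varepsilon(t+\log R)^{4/(n-2)}\,w$$
on $[0,\infty)\times\mathbb{S}^{n-1}$, and the weighted norm $\|\cdot\|_{(k,\alpha),\mu,1}$ becomes a standard H\"older norm weighted by $e^{-(\mu-(2-n)/2)t}$. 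By Proposition \ref{propo06} the potential $v_\varepsilon^{4/(n-2)}$ is bounded and periodic uniformly in $\varepsilon$, and translation in $t$ absorbs the parameter $R$, so it suffices to analyze $\widetilde{L}_\varepsilon:=\widetilde{L}_{\varepsilon,1}$.

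Next I would Fourier decompose in $\theta$: writing $w=\sum_j w_j(t)e_j(\theta)$, each coefficient satisfies the scalar ODE
$$\mathcal{L}_j w_j = w_j'' - \Bigl(\tfrac{(n-2)^2}{4}+\lambda_j\Bigr)w_j + \tfrac{n(n+2)}{4}v_\varepsilon^{4/(n-2)}w_j = F_j.$$
By Floquet analysis, $\mathcal{L}_j$ has two characteristic exponents $\pm\alpha_j$ (modulated by the periodic potential) with $\alpha_j^2=(n-2)^2/4+\lambda_j$, which translate into radial growth rates $(2-n)/2\pm\alpha_j$ for the original variable $v$. A direct computation yields $\{0,2-n\}$ for $j=0$, $\{1,1-n\}$ for $j=1,\dots,n$, and, since $(n-2)^2/4+2n=(n+2)^2/4$, the pair $\{2,-n\}$ for $j=n+1$ (with even wider gaps for larger $j$). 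The widest weight interval avoiding every resonance is $(1,2)$, accounting for the general case, whereas on the high-eigenmode subspace only the outermost resonances $\{-n,2\}$ survive, yielding the enlarged range $(-n,2)$.

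For each Fourier mode I would construct the Green's function by variation of parameters. On the high modes ($j\geq n+1$) the homogeneous ODE admits no bounded solution, so the solution decaying at $+\infty$ and the solution vanishing at $t=0$ are linearly independent and produce an inverse with the correct weighted ODE estimate. Summing over modes and using Lemma \ref{lem07} to control the high-mode projection reproduces $G_{\varepsilon,R}$ on $\pi''(C^{0,\alpha}_{\mu-2})$ for $\mu\in(-n,2)$. The general case additionally requires the low modes $j\leq n$: here the Delaunay symmetries (variations of $\varepsilon$ and the translations parametrized by $a$) produce bounded Jacobi fields with radial growth $0$ or $1$, and the requirement $\mu>1$ forces the inverse to project out these Jacobi fields, pinning down a unique solution in each low mode; the boundary condition $\pi_1''(w|_{\partial B_1(0)})=0$ leaves the low-mode Dirichlet data free, so this prescription is consistent.

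The main obstacle is establishing uniform bounds in $\varepsilon$ and $R$. Independence of $R$ is automatic from the shift argument in $t$. Independence of $\varepsilon$ is the subtle point, and I would prove it by a rescaling/contradiction argument: if the uniform bound failed there would exist sequences $\varepsilon_k\to 0$, $R_k$, and normalized solutions $w^{(k)}$ with $L_{\varepsilon_k,R_k}w^{(k)}\to 0$ in the weaker weighted norm; renormalizing at the point where $w^{(k)}$ attains its weighted maximum and passing to the limit using the estimates of Proposition \ref{propo05} would extract a bounded nontrivial solution of a limiting operator, which on the high-eigenmode subspace has no bounded Jacobi field---contradiction. The full execution of this rescaling analysis appears in \cite{MP}.
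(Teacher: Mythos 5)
Your sketch is correct and faithfully reproduces the argument of Mazzeo--Pacard \cite{MP} (cylindrical conjugation, mode-by-mode ODE analysis with the indicial roots $0,2-n$ for $j=0$, $1,1-n$ for $j=1,\dots,n$, and $2,-n$ for $j=n+1$, Green's function construction on the high modes, projection off the Jacobi fields on the low modes, and a blow-up/rescaling argument for uniform bounds in $\varepsilon$), combined with Byde's observation \cite{AB} that the bound is also $R$-independent. The paper's own proof consists solely of a citation to those two references, so there is nothing in the paper to compare step-by-step; you are effectively supplying the content that the paper delegates. The one place where your phrasing glosses over a genuine subtlety is the claim that "translation in $t$ absorbs the parameter $R$": because the domain is $t\in[0,\infty)$ with a boundary condition at $t=0$, translating the potential is not a symmetry of the boundary-value problem, and establishing $R$-independence requires a bit more work (this is precisely the content of Byde's observation in \cite{AB}, which constructs the inverse by solving on the full cylinder and then correcting at the boundary).
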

\begin{proof} The statement in \cite{MP} is that for each fixed $R$ the norm of $G_{\varepsilon,R}$ is independent of $\varepsilon$, but this bound might depend on $R$. In \cite{AB}, Byde observed that the norm of $G_{\varepsilon,R}$ also does not depend on $R$.
\end{proof}
We will work in $B_r(0)\backslash\{0\}$ with $0<r\leq 1$, then it is convenient to study the operator $L_{\varepsilon,R}$ in function spaces defined in $B_r(0)\backslash\{0\}$. 

Let $f\in C_{\mu-2}^{0,\alpha}(B_1(0)\backslash\{0\})$ and $w\in C_\mu^{2,\alpha}(B_1(0)\backslash\{0\})$ be solution of (\ref{eq02}). Considering $g(x)=r^{-2}f(r^{-1}x)$ and $w_{r}(x)=w(r^{-1}x)$ we get that (\ref{eq02}) is equivalent to
$$\left\{\begin{array}{lcl}
L_{\varepsilon,rR}(w_{r})=g & \mbox{ in } & B_r(0)\backslash\{0\}\\
\pi''_r(w_{r}|_{\mathbb{S}_r^{n-1}})=0 & \mbox{ on } &  \partial B_r(0)
\end{array}\right..$$
Furthermore, since $\nabla^j w_{r}(x)=r^{-j}\nabla^jw(r^{-1}x)$ and $\nabla^jg(x)=r^{-2-j}\nabla^jf(r^{-1}x)$, we get
$$\|w_{r}\|_{(2,\alpha),\mu,r}\leq c\|g\|_{(0,\alpha),\mu-2,r},$$
where $c>0$ is a constant that does not depend on $\varepsilon$, $r$ and $R$. Thus, we obtain the following corollary.
%
%
\begin{coring}\label{cor03}
Let $\mu\in\left(1,2\right)$, $\alpha\in(0,1)$, $\varepsilon_0>0$ given by Proposition \ref{propo01}. Then for all $\varepsilon\in(0,\varepsilon_0)$, $R\in\mathbb{R}^+$ and $0<r\leq 1$ there is an operator
$$G_{\varepsilon,R,r}:C_{\mu-2}^{0,\alpha}(B_r(0)\backslash\{0\})\rightarrow C_\mu^{2,\alpha}(B_r(0)\backslash\{0\})$$
with norm bounded independently of $\varepsilon,$ $R$ and $r$, such that for each $f\in C_{\mu-2}^{0,\alpha}(B_r(0)\backslash\{0\})$, the function $w:=G_{\varepsilon,R,r}(f)$ solves the equation
\begin{equation*}
\left\{\begin{array}{lcl}
L_{\varepsilon,R}(w)=f & \mbox{ in } & B_r(0)\backslash\{0\}\\
\pi''_r(w|_{\mathbb{S}^{n-1}_r})=0 & \mbox{ on } & \partial B_r(0)
\end{array}\right..
\end{equation*}
Moreover, if $f\in\pi''(C^{0,\alpha}_{\mu-2}(B_r(0)\backslash\{0\}))$, then $w\in\pi''(C^{2,\alpha}_{\mu}(B_r(0)\backslash\{0\}))$ and we may take $\mu\in(-n,2)$.
\end{coring}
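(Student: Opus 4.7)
The plan is to reduce the problem to the unit-ball case (Proposition \ref{propo01}) by a dilation, and then track how the various weighted H\"older norms scale in order to see that the resulting bound is uniform in $r$, $R$, and $\varepsilon$.

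First I would observe how $L_{\varepsilon,R}$ behaves under dilation. Writing $w(x)=W(r^{-1}x)$ and using $u_{\varepsilon,R}(ry)=r^{(2-n)/2}u_{\varepsilon,R/r}(y)$ (which follows directly from $u_{\varepsilon,R}(\cdot)=R^{(2-n)/2}u_\varepsilon(R^{-1}\cdot)$), one computes
\[
L_{\varepsilon,R}(w)(ry)=r^{-2}\,L_{\varepsilon,R/r}(W)(y).
\]
Therefore, given $f\in C^{0,\alpha}_{\mu-2}(B_r(0)\setminus\{0\})$, the rescaled datum $\tilde f(y):=r^{2}f(ry)$ lives on $B_1(0)\setminus\{0\}$, and solving $L_{\varepsilon,R}(w)=f$ on $B_r$ with boundary condition $\pi''_r(w|_{\mathbb{S}^{n-1}_r})=0$ is equivalent to solving
\[
\left\{\begin{array}{ll} L_{\varepsilon,R/r}(W)=\tilde f & \text{in } B_1(0)\setminus\{0\},\\ \pi''_1(W|_{\mathbb{S}^{n-1}})=0 & \text{on } \partial B_1(0).\end{array}\right.
\]
The boundary conditions transfer correctly because spherical eigenfunctions on $\mathbb{S}^{n-1}_r$ are simply rescalings of those on $\mathbb{S}^{n-1}$, so $\pi''$ commutes with the dilation. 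Since $R/r\in\mathbb{R}^+$, Proposition \ref{propo01} applies (the point is that its norm bound is uniform in the second parameter), and produces $W=G_{\varepsilon,R/r}(\tilde f)$. I would then define $G_{\varepsilon,R,r}(f)(x):=W(r^{-1}x)$.

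The core verification, which is where the uniformity in $r$ comes in, is the scaling of the weighted H\"older seminorms. A direct computation on an annulus $|x|\in[\sigma,2\sigma]\subset(0,r/2]$ using $\nabla^j w(x)=r^{-j}(\nabla^jW)(r^{-1}x)$ gives $\|w\|_{(k,\alpha),[\sigma,2\sigma]}=\|W\|_{(k,\alpha),[r^{-1}\sigma,2r^{-1}\sigma]}$, and then the substitution $\tau=r^{-1}\sigma$ yields
\[
\|w\|_{(2,\alpha),\mu,r}=r^{-\mu}\,\|W\|_{(2,\alpha),\mu,1}.
\]
An analogous computation for $\tilde f(y)=r^{2}f(ry)$ gives $\|\tilde f\|_{(0,\alpha),\mu-2,1}=r^{\mu}\,\|f\|_{(0,\alpha),\mu-2,r}$. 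Combining these with the uniform bound $\|W\|_{(2,\alpha),\mu,1}\leq c\,\|\tilde f\|_{(0,\alpha),\mu-2,1}$ from Proposition \ref{propo01} makes the powers of $r$ cancel exactly, leaving $\|w\|_{(2,\alpha),\mu,r}\leq c\,\|f\|_{(0,\alpha),\mu-2,r}$ with $c$ independent of $\varepsilon,R,r$.

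For the improved weight range $\mu\in(-n,2)$ in the high-eigenmode setting, I would simply note that the dilation $y\mapsto r^{-1}x$ preserves the property $\pi''_s u(s\cdot)=u(s\cdot)$ for every radius $s$, so $f\in\pi''(C^{0,\alpha}_{\mu-2}(B_r(0)\setminus\{0\}))$ implies $\tilde f\in\pi''(C^{0,\alpha}_{\mu-2}(B_1(0)\setminus\{0\}))$; Proposition \ref{propo01} then yields $W$, and hence $w$, in the corresponding high-eigenmode weighted space. The only mild subtlety — and the step I would write out most carefully — is the bookkeeping of the scaling exponents in the previous paragraph, since a slip there would hide a spurious $r^{\pm\text{something}}$ and destroy the uniform bound that the rest of the paper depends on.
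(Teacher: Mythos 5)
Your proposal is correct and is essentially the same dilation argument the paper uses in the discussion preceding the corollary: rescale to the unit ball (with the Delaunay parameter replaced by $R/r$, equivalently the paper's $rR$ read in the other direction), invoke Proposition \ref{propo01} and its $R$-uniformity, and check that the weighted H\"older norms scale so the powers of $r$ cancel. The scaling identities $\|w\|_{(2,\alpha),\mu,r}=r^{-\mu}\|W\|_{(2,\alpha),\mu,1}$ and $\|\tilde f\|_{(0,\alpha),\mu-2,1}=r^{\mu}\|f\|_{(0,\alpha),\mu-2,r}$ that you highlight are exactly what the paper's one-line norm estimate encodes, and they are correct.
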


In fact, we will work with the solution $u_{\varepsilon,R,a}$, and so, we need to find an inverse to $L_{\varepsilon,R,a}$ with norm bounded independently of $\varepsilon$, $R$, $a$ and $r$. But this is the content of the next corollary, whose proof is a perturbation argument.
%
%
\begin{coring}\label{cor01}
Let $\mu\in\left(1,2\right)$, $\alpha\in(0,1)$, $\varepsilon_0>0$ given by Proposition \ref{propo01}. Then for all $\varepsilon\in(0,\varepsilon_0)$, $R\in\mathbb{R}^+$, $a\in\mathbb{R}^n$ and $0<r\leq 1$ with $|a|r\leq r_0$ for some $r_0\in(0,1)$, there is an operator
$$G_{\varepsilon,R,r,a}:C_{\mu-2}^{0,\alpha}(B_r(0)\backslash\{0\})\rightarrow C_\mu^{2,\alpha}(B_r(0)\backslash\{0\}),$$
with norm bounded independently of $\varepsilon$, $R$, $r$ and $a$, such that for each $f\in C_{\mu-2}^{0,\alpha}(B_r(0)\backslash\{0\})$, the function $w:=G_{\varepsilon,R,r,a}(f)$ solves the equation
\begin{equation*}
\left\{\begin{array}{lcl}
L_{\varepsilon,R,a}(w)=f & \mbox{ in } & B_r(0)\backslash\{0\}\\
\pi''_r(w|_{\mathbb{S}^{n-1}_r})=0 & \mbox{ on } & \partial B_r(0)
\end{array}\right..
\end{equation*}
\end{coring}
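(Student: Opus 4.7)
The plan is a perturbation argument around Corollary \ref{cor03}. Write $L_{\varepsilon,R,a} = L_{\varepsilon,R} + E_{\varepsilon,R,a}$, where
$$E_{\varepsilon,R,a}(v) := \frac{n(n+2)}{4}\bigl(u_{\varepsilon,R,a}^{4/(n-2)} - u_{\varepsilon,R}^{4/(n-2)}\bigr)v,$$
and build the right inverse by the Neumann series
$$G_{\varepsilon,R,r,a} := G_{\varepsilon,R,r}\circ\bigl(I + E_{\varepsilon,R,a}\circ G_{\varepsilon,R,r}\bigr)^{-1}.$$
Since $\|G_{\varepsilon,R,r}\|$ is already bounded uniformly in $\varepsilon,R,r$ by Corollary \ref{cor03}, the whole plan reduces to bounding $E_{\varepsilon,R,a}\colon C^{2,\alpha}_\mu(B_r(0)\backslash\{0\})\to C^{0,\alpha}_{\mu-2}(B_r(0)\backslash\{0\})$ by a constant multiple of $|a|r$, with the constant uniform in every parameter.

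The key pointwise estimate I would establish is
$$\bigl|u_{\varepsilon,R,a}^{4/(n-2)}(x)-u_{\varepsilon,R}^{4/(n-2)}(x)\bigr|\leq C|a|\,|x|^{-1}$$
uniformly in $\varepsilon\in(0,\varepsilon_0)$, $R\in\mathbb{R}^+$ and $x$ with $|a||x|\leq r_0$. I would derive this multiplicatively. From (\ref{eq52}) and $|x-a|x|^2|=|x|(1-a\cdot x+O(|a|^2|x|^2))$, together with the Hamiltonian bound $|v'_\varepsilon|\leq c_n v_\varepsilon$ that follows from Proposition \ref{propo06}, a direct expansion of $v_\varepsilon$ and of the prefactor $|x-a|x|^2|^{(2-n)/2}$ yields
$$\frac{u_{\varepsilon,R,a}(x)}{u_{\varepsilon,R}(x)}=1+O(|a||x|)$$
with constant independent of $\varepsilon$ and $R$. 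Raising to the power $4/(n-2)$ and multiplying by $u_{\varepsilon,R}^{4/(n-2)}(x)\leq C|x|^{-2}$ (from $u_{\varepsilon,R}\leq C|x|^{(2-n)/2}$) gives the claimed bound. The matching control on the first derivative and the Hölder semi-norm of the potential difference, which is what the full $C^{0,\alpha}_{\mu-2}$ norm requires, is delivered by the $O''$ notation in Corollary \ref{cor02} together with the standard scaling on annuli of radius $\sim\sigma$.

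Inserting this into the weighted norm is then routine: for $\|v\|_{(2,\alpha),\mu,r}\leq 1$, on the annulus $\{|x|\in[\sigma,2\sigma]\}$ one has $|v|\leq C\sigma^\mu$, hence
$$\sigma^{-(\mu-2)}\|E_{\varepsilon,R,a}(v)\|_{(0,\alpha),[\sigma,2\sigma]}\leq C|a|\sigma\leq C|a|r.$$
Choosing $r_0$ so small that $Cr_0\|G_{\varepsilon,R,r}\|\leq\tfrac12$ uniformly in $\varepsilon,R,r$, the Neumann series converges on $C^{0,\alpha}_{\mu-2}(B_r(0)\backslash\{0\})$, and $G_{\varepsilon,R,r,a}$ is a right inverse of $L_{\varepsilon,R,a}$ with norm at most $2\|G_{\varepsilon,R,r}\|$. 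The boundary condition $\pi''_r(w|_{\mathbb{S}_r^{n-1}})=0$ is preserved automatically, since every term of the series is $G_{\varepsilon,R,r}$ applied to something and $G_{\varepsilon,R,r}$ always outputs functions with vanishing high-eigenmode trace.

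The one subtlety I would flag is that the pointwise bound must really be uniform in $\varepsilon$. The naive additive expansion $u_{\varepsilon,R,a}^{4/(n-2)}-u_{\varepsilon,R}^{4/(n-2)}\sim u_{\varepsilon,R}^{(6-n)/(n-2)}(u_{\varepsilon,R,a}-u_{\varepsilon,R})$ carries a prefactor $u_{\varepsilon,R}^{(6-n)/(n-2)}$ which, when $n>6$, blows up on the thin Delaunay necks $v_\varepsilon\sim\varepsilon$ and contributes a spurious factor of $\varepsilon^{(6-n)/(n-2)}$ that wrecks the uniform estimate. The multiplicative route above absorbs $u_{\varepsilon,R}$ into a factor controlled by $|x|^{-2}$ for every $\varepsilon,R$, and thereby avoids this obstruction; everything else in the proof is a fixed-point/Neumann-series bookkeeping.
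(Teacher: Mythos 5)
Your proposal is correct and takes essentially the same route as the paper: a Neumann-series perturbation of the inverse $G_{\varepsilon,R,r}$ from Corollary \ref{cor03}, driven by the uniform pointwise estimate $|u_{\varepsilon,R,a}^{4/(n-2)}-u_{\varepsilon,R}^{4/(n-2)}|\leq C|a||x|^{-1}$, which both you and the paper extract from the Hamiltonian bound $|v'_\varepsilon|\leq c_n v_\varepsilon$ (your "multiplicative" bookkeeping and the paper's fundamental-theorem-of-calculus applied to $v_\varepsilon^{4/(n-2)}$ are two cosmetic packagings of the same observation).
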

\begin{proof} We will use a perturbation argument. Thus,
$$(L_{\varepsilon,R,a}-L_{\varepsilon,R})v= \frac{n(n+2)}{4}\left(u_{\varepsilon,R,a}^{\frac{4}{n-2}}- u_{\varepsilon,R}^{\frac{4}{n-2}}\right)v$$
implies
$$\|(L_{\varepsilon,R,a}-L_{\varepsilon,R})v\|_{(0,\alpha),[\sigma,2\sigma]}\leq c\|u_{\varepsilon,R,a}^{\frac{4}{n-2}}-u_{\varepsilon,R}^{\frac{4}{n-2}}\| _{(0,\alpha),[\sigma,2\sigma]}\|v\|_{(0,\alpha),[\sigma,2\sigma]},$$
where $c>0$ does not depend on $\varepsilon$, $R$, $a$ and $r$.

Note that
$$v_\varepsilon^{\frac{4}{n-2}}\left(-\log|x|+\log\left |\frac{x}{|x|}-a|x|\right|+\log R\right)= v_\varepsilon^{\frac{4}{n-2}}(-\log|x|+\log R)$$
$$+\frac{4}{n-2}\int^{\log\left|\frac{x}{|x|}-a|x|\right|}_0 \left(v_\varepsilon^{\frac{6-n}{n-2}}v'_\varepsilon\right) (-\log |x|+\log R+t)dt.$$

Therefore, from (\ref{eq52}) and the expansion $|x-a|x|^2|^{-2}=|x|^{-2}+O(|a||x|^{-1})$, we get
$$u_{\varepsilon,R,a}^{\frac{4}{n-2}}(x)=u_{\varepsilon,R }^{\frac{4}{n-2}}(x)+\displaystyle \frac{4|x|^{-2}}{n-2}\int_0^{\log\left|\frac{x}{|x|}-a|x|\right|} \left(v_\varepsilon^{\frac{6-n}{n-2}}v_\varepsilon'\right) (-\log|x|+\log R+t)dt$$
$$\begin{array}{l}
+O(|a||x|^{-1})v_\varepsilon^{\frac{4}{n-2}}\left(-\log|x|+\log\left| \displaystyle \frac{x}{|x|}-a|x|\right|+\log R\right).
\end{array}$$

From the proof of Corollary \ref{cor02} we know that $|v_\varepsilon'|\leq c_nv_\varepsilon$. Hence,
$$|u_{\varepsilon,R,a}^{\frac{4}{n-2}}(x)- u_{\varepsilon,R}^{\frac{4}{n-2}}(x)|\leq  c_n|x|^{-2}\displaystyle\int_0^{O(|a||x|)} v_\varepsilon^{\frac{4}{n-2}}(-\log|x|+\log R+t)dt+O(|a||x|^{-1}),$$
since $\log ||x|^{-1}-a|x||=O(a|x|)$ and $0<\varepsilon\leq v_\varepsilon\leq 1$. Thus
\begin{equation}\label{eq76}
|u_{\varepsilon,R,a}^{\frac{4}{n-2}}(x)- u_{\varepsilon,R}^{\frac{4}{n-2}}(x)|\leq c_n|a||x|^{-1},
\end{equation}
where the constant $c>0$ does not depend on $\varepsilon$, $R$ and $a$.

The estimate for the full H\"older norm is similar.

Hence
$$\|u_{\varepsilon,R,a}^{\frac{4}{n-2}}-u_{\varepsilon,R}^{\frac{4}{n-2}} \|_{(0,\alpha),[\sigma,2\sigma]}\leq c|a|\sigma^{-1}$$
and then
$$\|(L_{\varepsilon,R,a}-L_{\varepsilon,R})v\|_{(0,\alpha),\mu-2,r}\leq c|a|r \|v\|_{(2,\alpha),\mu,r},$$
where $c>0$ is a constant that does not depend on $\varepsilon$, $R$, $a$ and $r$.

Therefore, $L_{\varepsilon,R,a}$ has a bounded right inverse for small enough $|a|r$ and this inverse has norm bounded independently of $\varepsilon$, $R$, $a$ and $r$. In fact, if we choose $r_0$ so that $r_0\leq \frac{1}{2}K^{-1}$, where the constant $K>0$ satisfies $\|G_{\varepsilon,R,r}\|\leq K$ for all $\varepsilon\in(0,\varepsilon_0)$, $R\in\mathbb{R}^+$ and $r\in(0,1)$, then
$$\|L_{\varepsilon,R,a}\circ G_{\varepsilon,R,r}-I\|\leq \|L_{\varepsilon,R,a}-L_{\varepsilon,R}\|\|G_{\varepsilon,R,r}\|\leq \frac{1}{2}.$$
This implies that $L_{\varepsilon,R,a}\circ G_{\varepsilon,R,r}$ has a bounded right inverse given by 
$$(L_{\varepsilon,R,a}\circ G_{\varepsilon,R,r})^{-1}:=\sum_{i=0}^{\infty}(I-L_{\varepsilon,R,a}\circ G_{\varepsilon,R,r})^i,$$
and it has norm bounded independently of $\varepsilon$, $R$, $a$ and $r$, in fact less than 1.

Therefore we define a right inverse of $L_{\varepsilon,R,a}$ as $G_{\varepsilon,R,r,a}:=G_{\varepsilon,R,r}\circ(L_{\varepsilon,R,a}\circ G_{\varepsilon,R,r})^{-1}$.
\end{proof}

%
%
\subsection{Poisson operator associated to the Laplacian $\Delta$}\label{sec10}
%
%
\subsubsection{Laplacian $\Delta$ in $B_r(0)\backslash\{0\}\subset\mathbb{R}^n$}
Since $\pi_r''(G_{\varepsilon,R,r,a}(f)|_{\mathbb{S}^{n-1}_r})=0$ on $\partial B_r(0)$, we need to find some way to prescribe the high eigenmode boundary data at $\partial B_r(0)$. This is done using the Poisson operator associated to the Laplacian $\Delta$.
%
%
\begin{proping}\label{propo02}
Given $\alpha\in(0,1)$, there is a bounded operator
$$\mathcal{P}_1:\pi_1''(C^{2,\alpha}(\mathbb{S}^{n-1}))\longrightarrow \pi_1''(C^{2,\alpha}_2(B_1(0)\backslash\{0\})),$$
so that
$$\left\{\begin{array}{l}
\Delta(\mathcal{P}_1(\phi))=0\;\;\;\mbox{ in }\;\;\;B_1(0)\\
\pi_1''(\mathcal{P}_1(\phi)|_{\mathbb{S}^{n-1}})=\phi \;\;\;\mbox{ on }\;\;\; \partial B_1(0)
\end{array}\right..$$
\end{proping}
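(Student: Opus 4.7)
My plan is to define $\mathcal{P}_1(\phi)$ as the solution of the classical Dirichlet problem $\Delta u=0$ in $B_1(0)$ with $u|_{\partial B_1(0)}=\phi$, and then to verify the two non-trivial features of the statement: that $u$ actually lies in the high-eigenmode subspace on every sphere $\mathbb{S}^{n-1}_r$ and that $u\in C^{2,\alpha}_2(B_1(0)\backslash\{0\})$ with a bound linear in $\|\phi\|_{C^{2,\alpha}(\mathbb{S}^{n-1})}$. Existence and the basic Schauder estimate
$$\|u\|_{C^{2,\alpha}(\overline{B_1(0)})}\leq C\,\|\phi\|_{C^{2,\alpha}(\mathbb{S}^{n-1})}$$
are standard (e.g.\ Gilbarg--Trudinger), so the whole task reduces to analysing $u$ near the origin.

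For the projection property, I would expand $\phi$ in the orthonormal basis $\{e_j\}$ from the preliminaries and recall that the harmonic extension of each spherical harmonic $e_j$ is the homogeneous harmonic polynomial $r^{j}e_j(\theta)$. Hence, writing $\phi=\sum_{j\geq n+1}\phi_j e_j$, the solution is formally
$$u(r\theta)=\sum_{j\geq n+1}\phi_j\,r^{j}e_j(\theta),$$
which for every $r\in(0,1]$ lies in the high-eigenmode subspace on $\mathbb{S}^{n-1}_r$, i.e.\ $\pi''_r(u(r\,\cdot))=u(r\,\cdot)$; uniqueness of harmonic extensions makes this formal identity rigorous without having to deal with convergence questions.

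For the weighted decay, the point is that $u$ is smooth in the interior and its Taylor expansion at $0$ consists solely of homogeneous harmonic polynomials of degree $\geq n+1$, because the modes $j=0,1,\dots,n$ are killed by the high-eigenmode hypothesis on $\phi$. Consequently $\nabla^k u(0)=0$ for $k=0,\dots,n$, and Taylor's theorem gives $|\nabla^k u(x)|\leq C_k\,|x|^{n+1-k}$ for $k=0,1,2$, with $C_k\leq C\,\|\phi\|_{C^{2,\alpha}(\mathbb{S}^{n-1})}$ through the interior Schauder estimate above. On the annulus $[\sigma,2\sigma]$ this yields $\|u\|_{(2,\alpha),[\sigma,2\sigma]}\leq C\sigma^{n-1}\|\phi\|_{C^{2,\alpha}(\mathbb{S}^{n-1})}$, and since $n\geq 3$ we deduce
$$\sigma^{-2}\|u\|_{(2,\alpha),[\sigma,2\sigma]}\leq C\sigma^{n-3}\|\phi\|_{C^{2,\alpha}(\mathbb{S}^{n-1})}\leq C\|\phi\|_{C^{2,\alpha}(\mathbb{S}^{n-1})},$$
which is exactly the $C^{2,\alpha}_2$ norm. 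The mildest step is boundedness; the only genuine observation is that filtering out the low eigenmodes forces vanishing of order $n+1$ at the origin, which comfortably beats the required weight $\mu=2$ and in fact would allow a much stronger statement.
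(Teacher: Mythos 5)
The overall strategy --- take the classical harmonic extension $u$ of $\phi$ and verify that it remains in the high eigenmode on each sphere and lies in $C^{2,\alpha}_2(B_1(0)\setminus\{0\})$ --- is sound, and your projection argument (uniqueness of the harmonic extension together with the fact that each spherical harmonic extends to a homogeneous harmonic polynomial) is the right one. However, there is a concrete error in your bookkeeping that is worth flagging even though the final conclusion survives.

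You write the harmonic extension of $e_j$ as $r^{j}e_j(\theta)$ and infer $\nabla^k u(0)=0$ for $k=0,\ldots,n$. This conflates the eigenfunction \emph{index} $j$ (which counts eigenfunctions with multiplicity) with the \emph{degree} of the associated homogeneous harmonic polynomial. In the paper's convention, $\lambda_0=0$ is the constant mode (degree $0$), $\lambda_1=\cdots=\lambda_n=n-1$ are the $n$ linear modes (degree $1$), and $\lambda_{n+1}=2n$ is already the first degree-$2$ mode. The harmonic extension of $e_j$ is $r^{d(j)}e_j(\theta)$ where $d(j)$ is the degree of $e_j$; for $j\geq n+1$ one has $d(j)\geq 2$, not $d(j)\geq n+1$. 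Consequently, killing the modes $j=0,1,\ldots,n$ forces $u(0)=0$ and $\nabla u(0)=0$, but $\nabla^2u(0)$ need not vanish: $u$ vanishes to order exactly $2$ at the origin, not order $n+1$. Your claims ``$\nabla^k u(0)=0$ for $k=0,\ldots,n$'' and ``$|\nabla^k u(x)|\leq C_k|x|^{n+1-k}$'' are therefore false, as is the intermediate bound $\|u\|_{(2,\alpha),[\sigma,2\sigma]}\leq C\sigma^{n-1}\|\phi\|$.

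With the corrected decay $|u(x)|\leq C|x|^2\|\phi\|$, $|\nabla u(x)|\leq C|x|\,\|\phi\|$, $|\nabla^2u(x)|\leq C\|\phi\|$, together with the global Schauder bound controlling the H\"older seminorm of $\nabla^2u$, one still obtains
$$\|u\|_{(2,\alpha),[\sigma,2\sigma]}\leq C\sigma^{2}\,\|\phi\|_{C^{2,\alpha}(\mathbb{S}^{n-1})},$$
hence $\|u\|_{(2,\alpha),2,1}\leq C\|\phi\|_{C^{2,\alpha}(\mathbb{S}^{n-1})}$. So the target $\mathcal{P}_1(\phi)\in C^{2,\alpha}_2$ is reached, but the weight $\mu=2$ is sharp here --- it is not ``comfortably beaten.'' The paper itself gives no internal proof (it only cites \cite{AB}, \cite{M}, \cite{PR}), so there is no direct comparison to make, but those references likewise rest on exactly the order-$2$ vanishing you should be deriving.
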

\begin{proof} See Proposition 2.2 in \cite{AB}, Proposition 11.25 in \cite{M} and Lemma 6.2 in \cite{PR}.
\end{proof}
For $\mu\leq 2$ and $0<r\leq 1$ we can define an analogous operator,
$$\mathcal{P}_r:\pi_r''(C^{2,\alpha}(\mathbb{S}_r^{n-1}))\longrightarrow \pi_r''(C^{2,\alpha}_\mu(B_r(0)\backslash\{0\}))$$
as 
\begin{equation}\label{eq62}
\mathcal{P}_r(\phi_r)(x)=\mathcal{P}_1(\phi)(r^{-1}x),
\end{equation}
where $\phi(\theta):=\phi_r(r\theta)$. By Proposition \ref{propo02} we deduce that
$$\left\{\begin{array}{lcl}
\Delta(\mathcal{P}_r(\phi_r))=0 & \mbox{ in } & B_r(0)\backslash\{0\}\\
\pi_r''(\mathcal{P}_r(\phi_r)|_{\mathbb{S}_r^{n-1}})=\phi_r & \mbox{ on } & \partial B_r(0)
\end{array}\right.$$
and
\begin{equation}\label{eq73}
\|\mathcal{P}_r(\phi_r)\|_{(2,\alpha),\mu,r}\leq C r^{-\mu}\|\phi_r\|_{(2,\alpha),r},
\end{equation}
where the constant $C>0$ does not depend on $r$ and the norm $\|\phi_r\|_{(2,\alpha),r}$ is defined in Definition \ref{def2}.
%
%
\subsubsection{Laplacian $\Delta$ in $\mathbb{R}^n\backslash B_r(0)$}\label{sec06}
For the same reason as before we will need a Poisson operator associated to the Laplacian $\Delta$ defined in $\mathbb{R}^n\backslash B_r(0)$.
%
%
\begin{proping}\label{lem4}
Assume that $\varphi\in C^{2,\alpha}(\mathbb{S}^{n-1})$ and let $\mathcal{Q}_1(\varphi)$ be the only solution of
$$\left\{\begin{array}{lcl}
\Delta v=0 & \mbox{ in } & \mathbb{R}^n\backslash B_1(0)\\
v=\varphi & \mbox{ on } & \partial B_1(0)
\end{array}\right.$$
which tends to 0 at $\infty$. Then
$$\|\mathcal{Q}_1(\varphi)\|_{C^{2,\alpha}_{1-n}(\mathbb{R}^n\backslash B_1(0))}\leq C\|\varphi\|_{(2,\alpha),1},$$
if $\varphi$ is $L^2-$orthogonal to the constant function.
\end{proping}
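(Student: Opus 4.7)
The plan is to reduce the exterior Dirichlet problem to an interior one via the Kelvin transform $y=x/|x|^2$. First, I would solve the classical interior Dirichlet problem $\Delta\tilde v=0$ in $B_1(0)$ with $\tilde v|_{\partial B_1}=\varphi$, which has a unique solution $\tilde v\in C^{2,\alpha}(\overline{B_1(0)})$ obeying the Schauder bound $\|\tilde v\|_{C^{2,\alpha}(\overline{B_1(0)})}\leq C\|\varphi\|_{(2,\alpha),1}$. The crucial role of the orthogonality assumption $\int_{\mathbb{S}^{n-1}}\varphi=0$ enters through the mean value property, which forces $\tilde v(0)=\bar\varphi=0$.

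Next, set $\mathcal{Q}_1(\varphi)(x):=|x|^{2-n}\tilde v(x/|x|^2)$ on $\mathbb{R}^n\setminus B_1(0)$. The Kelvin transform preserves harmonicity, so $\mathcal{Q}_1(\varphi)$ is harmonic on the exterior; it equals $\varphi$ on $\partial B_1(0)$; and since $y=x/|x|^2\to 0$ as $|x|\to\infty$, the Taylor expansion $\tilde v(y)=\nabla\tilde v(0)\cdot y+O(|y|^{2})$ combined with $\tilde v(0)=0$ yields the pointwise decay $|\mathcal{Q}_1(\varphi)(x)|\leq C\|\varphi\|_{(2,\alpha),1}|x|^{1-n}$. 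Uniqueness among decaying solutions follows from the standard maximum principle applied in the exterior domain.

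To upgrade this pointwise bound to the full weighted $C^{2,\alpha}_{1-n}$ norm, I would use the dyadic rescaling built into Definition \ref{def1}: for each $\sigma\geq 1$, set $w_\sigma(z):=\sigma^{n-1}\mathcal{Q}_1(\varphi)(\sigma z)$, which is harmonic on the fixed annulus $\{1/2\leq|z|\leq 5/2\}$ and uniformly bounded there by the pointwise estimate above. Interior Schauder estimates for harmonic functions then control $\|w_\sigma\|_{C^{2,\alpha}(\{1\leq|z|\leq 2\})}$ uniformly in $\sigma$, and rescaling back gives $\sigma^{n-1}\|\mathcal{Q}_1(\varphi)\|_{(2,\alpha),[\sigma,2\sigma]}\leq C\|\varphi\|_{(2,\alpha),1}$, which is exactly the bound required by the weight $1-n$. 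The main delicate point is that the orthogonality $\varphi\perp 1$ is used in an essential way: without it, $\tilde v(0)=\bar\varphi\neq 0$, and the best decay attainable would be the slower $|x|^{2-n}$, falling short of what the weight $1-n$ demands.
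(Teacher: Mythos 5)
The paper does not give a proof of this proposition; it simply cites Lemma~13.25 of Jleli's thesis \cite{M}. Your Kelvin-transform argument is a correct and self-contained substitute, and it is the natural thing to do here: the Kelvin inversion carries the exterior problem to the interior Dirichlet problem on $B_1(0)$, the orthogonality to constants kills the constant term $\tilde v(0)=\bar\varphi$, and the surviving leading term of $\tilde v$ at the origin is linear, which is exactly what produces the decay rate $|x|^{1-n}$ after multiplying by $|x|^{2-n}$. This matches the explicit spherical-harmonic expansion $\mathcal{Q}_1(\varphi)=\sum_{i\geq 1}|x|^{2-n-i}\varphi_i$ recorded in Remark~\ref{remark01}, whose leading mode is $|x|^{1-n}\varphi_1$; without orthogonality one would instead face the slower $|x|^{2-n}$ coming from $\varphi_0$, and the stated bound with weight $1-n$ would indeed fail, so your remark about that is on target.

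One small point to tighten: the claim that for \emph{each} $\sigma\geq 1$ the rescaled function $w_\sigma(z)=\sigma^{n-1}\mathcal{Q}_1(\varphi)(\sigma z)$ is harmonic on the fixed annulus $\{1/2\leq|z|\leq 5/2\}$ is literally false when $\sigma<2$, since then $\sigma z$ enters $B_1(0)$ where $\mathcal{Q}_1(\varphi)$ is undefined. The interior-Schauder-on-dilated-annuli argument therefore only covers $\sigma\geq 2$ (or any fixed threshold bounded away from $1$). For the remaining range $1\leq\sigma\leq 2$ you should instead invoke the boundary regularity you already have: $\tilde v\in C^{2,\alpha}(\overline{B_1(0)})$ with $\|\tilde v\|_{C^{2,\alpha}(\overline{B_1})}\leq C\|\varphi\|_{(2,\alpha),1}$, and the Kelvin transform is a smooth diffeomorphism of a collar neighbourhood of $\mathbb{S}^{n-1}$, so $\mathcal{Q}_1(\varphi)\in C^{2,\alpha}(\{1\leq|x|\leq 4\})$ with the same control; this directly bounds $\sigma^{n-1}\|\mathcal{Q}_1(\varphi)\|_{(2,\alpha),[\sigma,2\sigma]}$ on the bounded range of $\sigma$. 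With that patch the proof is complete.
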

%
%
\begin{proof} See Lemma 13.25 in \cite{M}.
\end{proof}
Here the space $C^{k,\alpha}_\mu(\mathbb{R}^n\backslash B_r(0))$ is the collection of functions $u$ that are locally in $C^{k,\alpha}(\mathbb{R}^n\backslash B_r(0))$ and for which the norm
$$\|u\|_{C^{k,\alpha}_\mu(\mathbb{R}^n\backslash B_r(0))}:= \sup_{\sigma\geq r}\sigma^{-\mu}\|u\|_{(k,\alpha),[\sigma,2\sigma]}$$
is finite.
%
%
\begin{remark}\label{remark01}
In this case, it is very useful to know an explicit expression for $\mathcal{Q}_1$, since it has a component in the space spanned by the coordinate functions and this will be important to control this space in Section \ref{sec05}. Hence, if we write $\varphi=\displaystyle\sum_{i=2}^\infty \varphi_i$, with $\varphi$ belonging to the eigenspace associated to the eigenvalue $i(i+n-2)$, then
$$\mathcal{Q}_1(\varphi)(x)=\sum_{i=1}^\infty |x|^{2-n-j}\varphi_i.$$
\end{remark}
Now, define 
\begin{equation}\label{eq75}
\mathcal{Q}_r(\varphi_{r})(x):=\mathcal{Q}_1(\varphi)(r^{-1}x),
\end{equation}
where $\varphi_{r}(x):=\varphi(r^{-1}x)$. From Proposition \ref{lem4}, we deduce that
$$\left\{\begin{array}{lcl}
\Delta \mathcal{Q}_r(\varphi_{r}) =0 & \mbox{ in } & \mathbb{R}^n\backslash B_r(0)\\
\mathcal{Q}_r(\varphi_{r}) =\varphi_{r} & \mbox{ on } & \partial B_r(0)
\end{array}\right.$$
and
\begin{equation}\label{eq74}
\|\mathcal{Q}_r(\varphi_{r})\|_{C^{2,\alpha}_{1-n}(\mathbb{R}^n\backslash B_r(0))}\leq Cr^{n-1}\|\varphi_{r}\|_{(2,\alpha),r},
\end{equation}
where $C>0$ is a constant that does not depend on $r$.
%
%
\subsection{Conformal normal coordinates}\label{sec11}
Since our problem is conformally invariant, in Section \ref{sec02} we will work in conformal normal coordinates. In this section we introduce some notation and an asymptotic expansion for the scalar curvature in conformal normal coordinates, which will be essential in the interior analysis of Section \ref{sec02}.
%
%
\begin{theorem}[Lee--Parker, \cite{LP}]\label{teo07}
Let $M^n$ be an $n-$dimensional Riemannian manifold and $P\in M$. For each $N\geq 2$ there is a conformal metric $g$ on $M$ such that
$$\det g_{ij}=1+O(r^N),$$
where $r=|x|$ in $g-$normal coordinates at P. In these coordinates, if $N\geq 5$, the scalar curvature of $g$ satisfies $R_g=O(r^2)$.
\end{theorem}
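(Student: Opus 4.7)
The plan is to construct the desired conformal factor by induction on the order of vanishing. I start with the given metric $g_0$ and fix $g_0$-normal coordinates at $P$. I then seek a conformal factor of the form $g = e^{2f}g_0$, where $f$ is a polynomial of degree at most $N-1$ with $f(P) = 0$, and pass to $g$-normal coordinates at $P$. In those coordinates there is a universal Taylor expansion
\[
\log\det g_{ij}(x) \;=\; \sum_{k=2}^{N-1} p_k(x) \;+\; O(r^N),
\]
where each $p_k$ is a homogeneous polynomial of degree $k$ whose coefficients are universal polynomial expressions in the curvature of $g$ at $P$ and its covariant derivatives of order at most $k-2$. The goal is to choose $f$ so that $p_2,\ldots,p_{N-1}$ all vanish, which then gives $\det g_{ij} = 1 + O(r^N)$ as required.

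For the inductive construction, write $f = f_2 + f_3 + \cdots + f_{N-1}$ with $f_j$ a homogeneous polynomial of degree $j$ in the $g_0$-normal coordinates. One computes that modifying $f_k$ alters $p_k$ by a leading contribution of the form $c_n\, \Delta f_k$, where $c_n\neq 0$ is a dimensional constant and $\Delta$ is the Euclidean Laplacian, while affecting only the higher-order $p_m$ with $m>k$, which will be dealt with at later stages of the induction. Since $\Delta$ sends homogeneous polynomials of degree $k$ onto those of degree $k-2$ surjectively on $\mathbb{R}^n$ (as follows from the spherical harmonic decomposition), one can solve $c_n\,\Delta f_k = -p_k$ at each step and successively eliminate $p_2,p_3,\ldots,p_{N-1}$.

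For the scalar curvature claim with $N\geq 5$, recall the universal normal-coordinate expansion
\[
\log\det g_{ij}(x) \;=\; -\tfrac{1}{3}R_{ij}(P)\,x^i x^j \;-\; \tfrac{1}{6}\nabla_m R_{ij}(P)\,x^i x^j x^m \;+\; O(r^4),
\]
whose cubic coefficient is automatically symmetric in $i,j,m$. The hypothesis $\det g_{ij}=1+O(r^5)$ forces the quadratic and cubic terms to vanish, giving $R_{ij}(P)=0$ and the full symmetrization $\nabla_{(m} R_{ij)}(P)=0$. Tracing the second identity over $i,j$ (using $g^{ij}(P)=\delta^{ij}$) gives $2\nabla^j R_{jm}(P) + \nabla_m R_g(P) = 0$, and the contracted second Bianchi identity $2\nabla^j R_{jm}=\nabla_m R_g$ then forces $\nabla_m R_g(P)=0$. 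Combined with $R_g(P) = \delta^{ij}R_{ij}(P) = 0$, Taylor's theorem yields $R_g(x) = O(r^2)$.

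The main obstacle is verifying the precise variational identity $p_k \mapsto p_k + c_n\,\Delta f_k$ with $c_n\neq 0$: altering the degree-$k$ part of $f$ both conformally rescales the metric and changes the passage from $g_0$-normal to $g$-normal coordinates, and these two contributions must be tracked simultaneously and combined at each order. Once the nonvanishing of $c_n$ is secured at every order, the inductive step reduces to the elementary surjectivity of $\Delta$ between polynomial spaces, and the induction closes automatically.
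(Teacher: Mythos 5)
The paper itself gives no proof of Theorem \ref{teo07}; it simply cites Lee--Parker \cite{LP} (Theorem 5.1 there), where the argument runs by inductively killing $\operatorname{Sym}\nabla^{(m)}\mathrm{Ric}(P)$ via a conformal change, with the $N\geq 5$ consequence extracted from the contracted second Bianchi identity exactly as you do. Your overall scheme is the same, and the final paragraph on $R_g=O(r^2)$ is correct.

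However, the key inductive step as you wrote it is dimensionally inconsistent and does not close. You claim that modifying the degree-$k$ part $f_k$ of the conformal exponent changes $p_k$ by $c_n\,\Delta f_k$ and then propose to solve $c_n\,\Delta f_k=-p_k$. But $\Delta f_k$ is homogeneous of degree $k-2$, while $p_k$ is homogeneous of degree $k$; the equation has no meaning. The actual linearized variation of $p_k$ under $f\mapsto f+f_k$ (with $f_2,\dots,f_{k-1}$ fixed) is governed by the conformal change of the symmetrized derivatives of Ricci,
\[
\delta\bigl(\operatorname{Sym}\nabla^{(k-2)}R_{ij}(P)\bigr)
= -(n-2)\operatorname{Sym}\nabla^{(k)}f_k(P)
 - \operatorname{Sym}\bigl(\nabla^{(k-2)}\Delta f_k(P)\otimes\delta_{ij}\bigr)
 +\ (\text{lower-order}),
\]
which, after contraction with $x^{\otimes k}$, produces a variation of the form
\[
\delta p_k \;=\; c_1\,f_k(x)\;+\;c_2\,r^2\,\Delta f_k(x),
\]
a degree-preserving linear operator on homogeneous polynomials of degree $k$ (equivalently, the map $T\mapsto(n-2)T+\operatorname{Sym}(\operatorname{tr}T\otimes\delta)$ on symmetric $k$-tensors). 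Already for $k=2$ one finds $\delta p_2=\tfrac{1}{3}\bigl(2(n-2)f_2+(\Delta f_2)\,r^2\bigr)$, which is not of the shape $c_n\Delta f_2$. So the mechanism you invoke — surjectivity of $\Delta$ from degree $k$ to degree $k-2$ — is not the relevant one. What one must show instead is that the operator $c_1\,I+c_2\,r^2\Delta$ is \emph{invertible} on degree-$k$ homogeneous polynomials; this follows from the spherical-harmonic decomposition (on the $r^{2j}\times(\text{degree-}(k-2j)\text{ harmonic})$ piece it acts by the scalar $c_1+2j\,c_2(n+2k-2j-2)$, which is nonzero when $c_1,c_2>0$ and $n\geq 3$), but that is an eigenvalue nonvanishing statement, not a surjectivity-of-$\Delta$ statement. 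Without this correction the inductive step, and hence the construction of $f$, does not go through.
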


In conformal normal coordinates it is more convenient to work with the Taylor expansion of the metric. In such coordinates, we will always write
$$g_{ij}=\exp(h_{ij}),$$
where $h_{ij}$ is a symmetric two-tensor satisfying $h_{ij}(x)=O(|x|^2)$ and tr$h_{ij}(x)=O(|x|^N)$. Here $N$ is a large number.

In what follows, we write $\partial_i\partial_jh_{ij}$ instead of $\displaystyle\sum_{i,j=1}^n \partial_i\partial_jh_{ij}$.
%
%
\begin{lemma}\label{lem11}
The functions $h_{ij}$ satisfy the following properties:
\begin{enumerate}
\item[a)] $\displaystyle\int_{\mathbb{S}_r^{n-1}}\partial_i\partial_jh_{ij}= O(r^{N'})$;
\item[b)] $\displaystyle\int_{\mathbb{S}_r^{n-1}} x_k\partial_i\partial_jh_{ij}= O(r^{N'})$ for every $1\leq k\leq n$,
\end{enumerate}
where $N'$ is as big as we want.
\end{lemma}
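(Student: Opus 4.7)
The plan is to use the scalar-curvature identity
\[
R_g \;=\; \partial_i\partial_j h_{ij} \;-\; \Delta(\mathrm{tr}\,h) \;+\; Q(h,\partial h,\partial^2 h),
\]
with $Q$ at least quadratic in $h$ and its derivatives, together with the properties of $h_{ij}$ in conformal normal coordinates. The integrals in (a) and (b) pick out, respectively, the spherical average and the dipole-mode (first eigenmode) component of $\partial_i\partial_j h_{ij}$, so it suffices to control those components on the right-hand side.

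The trace term is immediate: $\mathrm{tr}\,h=O(r^N)$ gives $\Delta(\mathrm{tr}\,h)=O(r^{N-2})$, so its $0$-th and first sphere-moments are $O(r^{N+n-3})$ and $O(r^{N+n-2})$, hence $O(r^{N'})$ once $N$ is taken large. The scalar-curvature term is handled using that in CN coordinates $R_g$ vanishes to high order at $p$: under the Main Theorem's hypothesis $\nabla^kW_{g_0}(p)=0$ for $k\le\lfloor(n-6)/2\rfloor$, combined with the CN construction of sufficiently high order $N$, one has $R_g=O(r^{M})$ with $M$ growing with $N$, so the corresponding sphere-moments are also $O(r^{N'})$.

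The heart of the matter is the nonlinear piece $Q$: pointwise one only has $Q=O(r^2)$, which gives $\int_{\mathbb{S}_r^{n-1}}Q=O(r^{n+1})$, not of arbitrary order. To improve this one Taylor-expands $h_{ij}(x)=\sum_{k\ge 2}H^{(k)}_{ij}(x)$ into homogeneous polynomials and reduces the sphere-moments of $Q$ to finitely many polynomial identities in the Taylor coefficients of $h$. The identity $\sum_i H^{(k)}_{ii}\equiv 0$ for $k<N$ coming from $\mathrm{tr}\,h=O(r^N)$, together with the Riemannian normal-coordinate identity $\tilde g_{ij}(x)x^j=x^i$ satisfied by the conformally rescaled metric that defines the CN coordinates, provide enough constraints to force the offending components of $Q$ to vanish up to arbitrarily high order. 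The main obstacle is precisely this bookkeeping step: verifying that the CN identities annihilate every low-eigenmode component of the nonlinear contribution, order by order, so that the spherical integrals in (a) and (b) really can be made $O(r^{N'})$ for $N'$ as large as desired.
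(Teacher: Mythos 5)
Your route through the decomposition $R_g = \partial_i\partial_j h_{ij} - \Delta(\mathrm{tr}\,h) + Q$ has a fatal flaw: the claim that, under the Weyl-vanishing hypothesis and for $N$ large in the conformal normal coordinate construction, one has $R_g = O(r^M)$ with $M$ growing with $N$. This is false. In conformal normal coordinates with $\nabla^\ell W_{g_0}(p)=0$ for $\ell\le d-2$ one gets $h_{ij}=O(|x|^{d+1})$ and hence $R_g = \partial_i\partial_j h_{ij} + O(|x|^{n-3}) = O(|x|^{d-1})$ with $d=\left[\frac{n-2}{2}\right]$; this order is fixed and cannot be improved by increasing $N$ (increasing $N$ only improves $\det g$ and $\mathrm{tr}\,h$, not the scalar curvature itself). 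Since $R_g$ and $\partial_i\partial_j h_{ij}$ agree to leading order, arguing that the low-mode averages of $\partial_i\partial_j h_{ij}$ are small \emph{because} $R_g$ is small is circular: it is essentially the statement being proved. Finally, you explicitly leave the quadratic term $Q$ unresolved, relying on an unverified ``bookkeeping'' claim, so that part of the argument is also missing.

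You do identify the correct key ingredient — the normal-coordinate identity $g_{ij}x^j=x^i$ — but you only invoke it as part of the Taylor-coefficient bookkeeping, whereas it should be used directly, and the scalar-curvature detour should be avoided entirely. Since $g=\exp(h)$ and $h=O(|x|^2)$, the Gauss lemma $\exp(h)_{ij}x^j = x^i$ bootstraps to $h_{ij}x^j = O(|x|^M)$ for every $M$, and $\det g = 1+O(|x|^N)$ gives $\mathrm{tr}\,h = O(|x|^N)$. Applying the divergence theorem to $\int_{B_r}\partial_i\partial_j h_{ij}\,dx$ yields the boundary term $r^{-1}\int_{\mathbb{S}_r^{n-1}}(\partial_j h_{ij})x_i\,d\sigma$, and the pointwise identity $(\partial_j h_{ij})x_i = \partial_j(h_{ij}x_i) - \mathrm{tr}\,h = O(|x|^{\min(M-1,N)})$ shows this ball integral is $O(r^{N+n-2})$; differentiating in $r$ gives (a). For (b), integrate $\int_{B_r}x_k\partial_i\partial_j h_{ij}\,dx$ by parts once in $x_j$, producing $-\int_{B_r}\partial_i h_{ik}\,dx$ plus a boundary term controlled by the same identity; another application of the divergence theorem turns $-\int_{B_r}\partial_i h_{ik}\,dx$ into $-r^{-1}\int_{\mathbb{S}^{n-1}_r}h_{ik}x_i\,d\sigma = O(r^{M+n-2})$, and again differentiating in $r$ concludes. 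No information about $R_g$ or the nonlinear remainder $Q$ is needed.
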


This lemma plays a central role in our argument for $n\geq 8$ in Section \ref{sec02}.

Using this notation we obtain the following proposition whose proof can be found in \cite{B1} and \cite{KMS}.
%
%
\begin{proping}\label{propo08}
There exists a constant $C>0$ such that
$$|R_g-\partial_i\partial_jh_{ij}|\leq C\sum_{|\alpha|=2}^d \sum_{i,j}|h_{ij\alpha}|^2|x|^{2|\alpha|-2}+C|x|^{n-3},$$
if $|x|\leq r\leq 1$, where 
$$h_{ij}(x)=\sum_{2\leq |\alpha|\leq n-4}h_{ij\alpha}x^\alpha+O(|x|^{n-3})$$
and $C$ depends only on $n$ and $|h|_{C^N(B_r(0))}$.
\end{proping}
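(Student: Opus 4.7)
The plan is to plug the ansatz $g_{ij}=\exp(h_{ij})$ into the standard coordinate formula
$$R_g = g^{ij}\bigl(\partial_k\Gamma^k_{ij} - \partial_i\Gamma^k_{kj} + \Gamma^k_{kl}\Gamma^l_{ij} - \Gamma^k_{il}\Gamma^l_{kj}\bigr),$$
expand systematically in powers of $h$, and separate the linear part from the higher-order remainder. First I would expand the matrix exponential as $g_{ij} = \delta_{ij} + h_{ij} + \tfrac{1}{2}h_{ik}h_{kj} + O(|h|^3)$ and $g^{ij} = \delta_{ij} - h_{ij} + \tfrac{1}{2}h_{ik}h_{kj} + O(|h|^3)$. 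Because $\det g_{ij}=\exp(\operatorname{tr}h)$, Theorem \ref{teo07} forces $\operatorname{tr}h(x) = O(|x|^N)$ with $N$ arbitrarily large. Substituting into the Christoffel symbols and grouping by order, the part of $R_g$ that is linear in $h$ is
$$R_g^{(1)} = \partial_i\partial_j h_{ij} - \Delta(\operatorname{tr}h),$$
and the last term is $O(|x|^{N-2})$, so it is absorbed into the $C|x|^{n-3}$ tail as soon as $N$ is taken large enough.

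Next I would organise the quadratic-and-higher contribution. It is a finite sum of schematic terms of the form $h\ast \partial^{2} h$ and $(\partial h)\ast(\partial h)$, plus cubic and higher pieces that carry at least one extra factor of $h$ and are therefore strictly smaller because $|h(x)|\le C|x|^{2}$. Using the Taylor expansion $h_{ij}(x)=\sum_{2\leq |\alpha|\leq n-4} h_{ij\alpha}x^\alpha + O(|x|^{n-3})$ one has the pointwise bounds
$$|\partial h_{ij}(x)|\le C\!\!\sum_{2\le|\alpha|\le n-4}\!|h_{ij\alpha}|\,|x|^{|\alpha|-1}+C|x|^{n-4},\qquad |\partial^{2}h_{ij}(x)|\le C\!\!\sum_{2\le|\alpha|\le n-4}\!|h_{ij\alpha}|\,|x|^{|\alpha|-2}+C|x|^{n-5}.$$
Then, applying $2ab\le a^{2}+b^{2}$ to each product of Taylor coefficients together with $|h(x)|\le C|x|^{2}$ gives
$$|\partial h(x)|^{2}+|h(x)|\,|\partial^{2}h(x)|\;\le\; C\sum_{|\alpha|=2}^{n-4}\sum_{i,j}|h_{ij\alpha}|^{2}\,|x|^{2|\alpha|-2}+C|x|^{n-3},$$
which, after multiplication by the bounded tensor factors coming from $g^{ij}$ and its low-order expansion, controls every quadratic-or-higher term. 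Setting $d=n-4$ yields the statement; the cubic-and-higher remainders pick up an additional factor $|h|\le C|x|^{2}$ and are thus absorbed in the $|x|^{n-3}$ tail provided $|h|_{C^{N}(B_{r}(0))}$ is bounded.

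The main obstacle is the book-keeping when reducing the schematic bound on $|\partial h|^{2}+|h||\partial^{2}h|$ to the diagonal form $\sum_{i,j}|h_{ij\alpha}|^{2}|x|^{2|\alpha|-2}$: one must verify that the cross terms $h_{ij\alpha}h_{kl\beta}|x|^{|\alpha|+|\beta|-2}$ with $|\alpha|\neq|\beta|$ and with distinct index pairs $(i,j)\neq(k,l)$ are dominated by the diagonal sum, which is precisely where the AM--GM inequality enters. A second subtle point, which is however automatic under the ansatz, is that the $\operatorname{tr}h$-type linear contribution $\Delta(\operatorname{tr}h)$ is killed by the determinant normalisation of Theorem \ref{teo07}; without it the asserted expression $R_g=\partial_i\partial_jh_{ij}+O(\cdots)$ would not hold to the stated precision.
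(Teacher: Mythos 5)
The paper gives no proof of this proposition; it simply cites Brendle~\cite{B1} and Khuri--Marques--Schoen~\cite{KMS}, so you are reconstructing an argument the paper omits. Your overall strategy — expand $g=\exp(h)$, extract the linear part $\partial_i\partial_jh_{ij}-\Delta(\operatorname{tr}h)$, kill $\Delta(\operatorname{tr}h)$ via the determinant normalisation, and then use the Taylor coefficients of $h$ together with AM--GM to control the quadratic remainder — is exactly the approach in those references, and the core estimate
$$|\partial h|^2+|h|\,|\partial^2h|\le C\sum_{\alpha}\sum_{i,j}|h_{ij\alpha}|^2|x|^{2|\alpha|-2}+C|x|^{n-3}$$
is the right engine (applying $2ab\le a^2+b^2$ to $a=|h_{ij\alpha}|\,|x|^{|\alpha|-1}$, $b=|h_{kl\beta}|\,|x|^{|\beta|-1}$ handles both the off-diagonal index pairs and the degree mismatch simultaneously).

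There are two errors in how you close the argument. First, ``Setting $d=n-4$ yields the statement'' is wrong: in this paper $d=\bigl[\tfrac{n-2}{2}\bigr]$, which is strictly less than $n-4$ once $n\ge 7$. Your estimate gives a sum up to $|\alpha|=n-4$; you still need to observe that for $d<|\alpha|\le n-4$ one has $2|\alpha|-2\ge 2(d+1)-2\ge n-3$, and the coefficients $|h_{ij\alpha}|$ are bounded by $|h|_{C^N(B_r(0))}$, so those diagonal terms are absorbed into the $C|x|^{n-3}$ tail. Without this truncation step you have proved a weaker inequality, not the one stated. Second, your final sentence asserting that the cubic-and-higher pieces ``are thus absorbed in the $|x|^{n-3}$ tail'' is incorrect for large $n$: the quadratic contribution already begins at order $|x|^2$, so a cubic piece begins at order $|x|^4$, which is not $O(|x|^{n-3})$ unless $n\le 7$. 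What is true — and what your preceding remark already suggests — is that each extra factor of $|h|\le C|x|^2\le C$ makes the cubic-and-higher terms dominated by the quadratic diagonal sum itself, not by the tail. With these two points repaired the argument is complete.
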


%
%

\section{Interior Analysis}\label{sec02}
%
%
Now that we have a right inverse for the operator $L_{\varepsilon,R,a}$ and a Poisson operator associated to the Laplacian $\Delta$, we are ready to show the existence of solutions with prescribed boundary data for the equation $H_{{g_0}}(v)=0$ in a small punctured ball $B_r(p)\backslash\{p\}\subset M$. The point $p$ is a nonremovable singularity, that is, $u$ blows-up at $p$. In fact, the hypothesis on the Weyl tensor is fundamental  for our construction if $n\geq 6$. But, if $3\leq n\leq 5$ we do not need any additional hypothesis on the point $p$. We do not know whether it is possible to show the Main Theorem assuming the Weyl tensor vanishes up to order less than $\left[\frac{n-6}{2}\right]$. This should be an interesting question.

First we will explain how to use the assumption on the Weyl tensor to reduce the problem to a problem of finding a fixed point of a map, (\ref{eq36}) and (\ref{eq59}). After that, we will show that these maps has a fixed point for suitable parameters.
%
%
\subsection{Analysis in $B_r(p)\backslash\{p\}\subset M$}\label{sec07}
Throughout the rest of this work $d=\left[\frac{n-2}{2}\right]$, and $g$ will be a smooth conformal metric to $g_0$ in $M$ given by Theorem \ref{teo07}, with $N$ a large number. Hence, by the proof of Theorem \ref{teo07} in \cite{LP}, we can find some smooth function $\mathcal{F}\in C^\infty(M)$ such that $g=\mathcal{F}^{\frac{4}{n-2}}g_0$ and $\mathcal{F}(x)=1+O(|x|^2)$ in $g-$normal coordinates at $p$. In this section we will work in these coordinates around $p$, in the ball $B_{r_1}(p)$ with $0<r_1\leq 1$ fixed. 

Recall that $(M,g_0)$ is an $n-$dimensional compact Riemannian manifold with $R_{g_0}=n(n-1)$, $n\geq 3$, and the Weyl tensor $W_{g_0}$ at $p$ satisfies the condition
%
%
\begin{equation}\label{eq53}
\nabla^{\mathit{l}}W_{g_0}(p)=0,\;\mathit{l}=0,1,\ldots,d-2.
\end{equation}
Since the Weyl tensor is conformally invariant, it follows that $W_g$, the Weyl tensor of the metric $g$, satisfies the same condition. Note that if $3\leq n\leq 5$ then the condition on $W_g$ does not exist.

From Theorem \ref{teo07} the scalar curvature satisfies $R_g=O(|x|^2)$, but for $n\geq 8$ we can improve this decay, using the assumption of the Weyl tensor. This assumption implies $h_{ij}=O(|x|^{d+1})$ (see \cite{B}) and it follows from Proposition \ref{propo08} that 
\begin{equation}\label{eq70}
R_g=\partial_i\partial_j h_{ij}+O(|x|^{n-3}).
\end{equation}
We conclude that $R_g=O(|x|^{d-1})$. On the other hand, for $n=6$ and $7$ we have $d=2$ and in this case, we will consider $R_g=O(|x|^2)$, given directly by Theorem \ref{teo07}.

The main goal of this section is to solve the PDE
%
%
\begin{equation}\label{eq05}
H_g(u_{\varepsilon,R,a}+v)=0
\end{equation}
in $B_r(0)\backslash\{0\}\subset\mathbb{R}^n$ for some $0<r\leq r_1$, $\varepsilon>0$, $R>0$ and $a\in\mathbb{R}^n$, with $u_{\varepsilon,R,a}+v>0$ and prescribed
Dirichlet data, where the operator $H_g$ is defined in (\ref{eq50}) and $u_{\varepsilon,R,a}$ in (\ref{eq52}).

To solve this equation, we will use the method used by Byde and others, the fixed point method on Banach spaces. In \cite {AB}, Byde solves an equation like this assuming that $g$ is conformally flat in a neighborhood of $p$, and thus he uses directly the right inverse of $L_{\varepsilon,R}$ given by Corollary \ref{cor01}, to reduce the problem to a problem of fixed point. The main difference here is that we work with metrics not necessarily conformally flat, so we need to rearrange the terms of the equation (\ref{eq05}) in such a way that we can apply the right inverse of $L_{\varepsilon,R,a}$. 

For each $\phi\in\pi''(C^{2,\alpha}(\mathbb{S}_r^{n-1}))$ define $v_\phi:=\mathcal{P}_r(\phi)\in\pi''(C^{2,\alpha}_2(B_r(0)\backslash\{0\}))$ as in Proposition \ref{propo02}. It is easy to see that the equation (\ref{eq05}) is equivalent to
%
%
\begin{equation}\label{eq10}
\begin{array}{lcl}
L_{\varepsilon,R,a}(v) & = & (\Delta-\Delta_g)(u_{\varepsilon,R,a}+v_\phi+v) +\displaystyle\frac{n-2}{4(n-1)}R_g(u_{\varepsilon,R,a}+v_\phi+v)\\
& - & Q_{\varepsilon,R,a}(v_\phi+v)- \displaystyle\frac{n(n+2)}{4}u_{\varepsilon,R,a}^{\frac{4}{n-2}}v_\phi,
\end{array}
\end{equation}
since $u_{\varepsilon,R,a}$ solves the equation (\ref{eq01}). Here $L_{\varepsilon,R,a}$ is defined as in (\ref{eq38}),
%
%
\begin{equation}\label{eq39}
Q_{\varepsilon,R,a}(v):=Q^{u_{\varepsilon,R,a}}(v)
\end{equation}
and $Q^{u_{\varepsilon,R,a}}$ is defined in (\ref{eq44}).
\begin{remark}\label{rem01}
Throughout this work we will consider $|a|r_\varepsilon\leq 1/2$ with $r_\varepsilon=\varepsilon^s$, s restricted to $(d+1-\delta_1)^{-1}<s<4(d-2+3n/2)^{-1}$ and $\delta_1\in(0,(8n-16)^{-1})$.
\end{remark}

From this and (\ref{eq52}) it follows that there are constants $C_1>0$ and $C_2>0$ that do not depend on $\varepsilon$, $R$ and $a$, so that
\begin{equation}\label{eq105}
C_1\varepsilon|x|^{\frac{2-n}{2}}\leq u_{\varepsilon,R,a}(x)\leq C_2|x|^{\frac{2-n}{2}},
\end{equation}
for every $x$ in $B_{r_\varepsilon}(0)\backslash\{0\}$.

These restrictions are made to ensure some conditions that we need in the next lemma and in Section \ref{sec05}.
%
%
\begin{lemma}\label{lem01}
Let $\mu\in (1,3/2)$. There exists $\varepsilon_0\in(0,1)$ such that for each $\varepsilon\in(0,\varepsilon_0)$, $a\in\mathbb{R}^n$ with $|a|r_\varepsilon\leq 1$, and for all $v_i\in C_\mu^{2,\alpha}(B_{r_\varepsilon}(0)\backslash\{0\})$, $i=0,1$, and $w\in C_{2+d-\frac{n}{2}}^{2,\alpha}(B_{r_\varepsilon}(0)\backslash\{0\})$ with  $\|v_i\|_{(2,\alpha),\mu,r_\varepsilon}\leq c r_\varepsilon^{2+d-\mu-\frac{n}{2}-\delta_1}$ and $\|w\|_{(2,\alpha),2+d-\frac{n}{2},r_\varepsilon}\leq c$, for some constant $c>0$ independent of $\varepsilon$, we have that $Q_{\varepsilon,R,a}$ given by (\ref{eq39}) satisfies the inequalities
%
%
\begin{equation*}\label{eq55}
\begin{array}{c}
\|Q_{\varepsilon,R,a}(w+v_1)- Q_{\varepsilon,R,a}(w+v_0)\|_{(0,\alpha),\mu-2,r_{\varepsilon}}\leq \\
\leq C\varepsilon^{\lambda_n}r_\varepsilon^{d+1} \|v_1-v_0\|_{(2,\alpha),\mu,r_{\varepsilon}} \left(\|w\|_{(2,\alpha),2+d-\frac{n}{2},r_{\varepsilon}}+ \|v_1\|_{(2,\alpha),\mu,r_{\varepsilon}}+ \|v_0\|_{(2,\alpha),\mu,r_{\varepsilon}}\right),
\end{array}
\end{equation*}
and
%
%
\begin{equation*}\label{eq56}
\|Q_{\varepsilon,R,a}(w)\|_{(0,\alpha),\mu-2,r_{\varepsilon}}\leq C\varepsilon^{\lambda_n}r_\varepsilon^{3+2d-\frac{n}{2}-\mu} \|w\|_{(2,\alpha),2+d-\frac{n}{2},r_{\varepsilon}}^2.
\end{equation*}
Here $\lambda_n=0$ for $3\leq n\leq 6$, $\lambda_n=\frac{6-n}{n-2}$ for $n\geq 7$, and the constant $C>0$ does not depend on $\varepsilon$, $R$ and $a$.
\end{lemma}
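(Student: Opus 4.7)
The plan is to use the fundamental theorem of calculus twice to decompose $Q_{\varepsilon,R,a}$ into a manageable form. Writing $U:=u_{\varepsilon,R,a}$ and noting that $U,U+u>0$ whenever $|u|<U/2$, the identity
\begin{equation*}
(U+tu)^{\frac{4}{n-2}} - U^{\frac{4}{n-2}} = \frac{4t}{n-2}\,u\int_0^1 (U+\tau tu)^{\frac{6-n}{n-2}}\,d\tau
\end{equation*}
turns (\ref{eq44}) into
\begin{equation*}
Q_{\varepsilon,R,a}(u) = \frac{n(n+2)}{n-2}\,u^2\int_0^1\!\!\int_0^1 t\,(U+\tau tu)^{\frac{6-n}{n-2}}\,d\tau\,dt.
\end{equation*}
For the difference I would not expand this explicitly; rather, writing $Q_{\varepsilon,R,a}(u)=Cu\,J(u)$ with $J(u):=\int_0^1[(U+tu)^{4/(n-2)}-U^{4/(n-2)}]\,dt$, I split
\begin{equation*}
Q_{\varepsilon,R,a}(w+v_1)-Q_{\varepsilon,R,a}(w+v_0) = C(v_1-v_0)J(w+v_1) + C(w+v_0)\bigl[J(w+v_1)-J(w+v_0)\bigr]
\end{equation*}
and apply the identity above to $J(w+v_1)$ and to $J(w+v_1)-J(w+v_0)$, in each case producing one factor of the relevant small quantity times an integral of $(U+\cdot)^{(6-n)/(n-2)}$.

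The key pointwise ingredient is that whenever $|u|\leq U/2$,
\begin{equation*}
(U+u)^{\frac{6-n}{n-2}} \leq C\varepsilon^{\lambda_n}|x|^{\frac{n-6}{2}}.
\end{equation*}
For $3\leq n\leq 6$ the exponent $(6-n)/(n-2)$ is nonnegative, so I use the upper bound $U\leq C_2|x|^{(2-n)/2}$ of (\ref{eq105}), giving $\lambda_n=0$; for $n\geq 7$ the exponent is negative, so instead I invoke the lower bound $U\geq C_1\varepsilon|x|^{(2-n)/2}$, producing the factor $\varepsilon^{(6-n)/(n-2)}=\varepsilon^{\lambda_n}$. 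The hypothesis $|u|\leq U/2$ is secured by the size conditions on $w,v_i$ together with Remark \ref{rem01}: from $|w|/U\leq C\varepsilon^{-1}|x|^{d+1}\leq C\varepsilon^{s(d+1)-1}$, the restriction $s(d+1-\delta_1)>1$ forces this quantity to zero, and the analogous computation for $v_i$ reduces to $\varepsilon^{s(d+1-\delta_1)-1}\to 0$ as well.

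Assembling these ingredients yields the pointwise bounds
\begin{equation*}
|Q_{\varepsilon,R,a}(w+v_1)-Q_{\varepsilon,R,a}(w+v_0)|\leq C\varepsilon^{\lambda_n}|x|^{\frac{n-6}{2}}(|w|+|v_0|+|v_1|)\,|v_1-v_0|
\end{equation*}
and $|Q_{\varepsilon,R,a}(w)|\leq C\varepsilon^{\lambda_n}|x|^{(n-6)/2}|w|^2$, with the corresponding derivative and H\"older-seminorm estimates obtained by differentiating the integrand in $x$ and applying the bounds for $\nabla U$ and $\nabla U^{(6-n)/(n-2)}$ furnished by Proposition \ref{propo05} and Corollary \ref{cor02}. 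To convert these into $C^{0,\alpha}_{\mu-2}$ estimates I would, on each annulus $|x|\in[\sigma,2\sigma]$ with $\sigma\leq r_\varepsilon/2$, plug in $|w|\leq c\sigma^{2+d-n/2}\|w\|$ and $|v_i|\leq c\sigma^\mu\|v_i\|$, multiply through by $\sigma^{2-\mu}$, and simplify to exponents $\sigma^{d+1}\|w\|$ and $\sigma^{(n-2)/2+\mu}(\|v_0\|+\|v_1\|)$ (respectively $\sigma^{3+2d-n/2-\mu}\|w\|^2$), each multiplied by $\varepsilon^{\lambda_n}\|v_1-v_0\|$ (respectively $\varepsilon^{\lambda_n}$). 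A short check using $d\in\{(n-2)/2,(n-3)/2\}$ and $\mu\in(1,3/2)$ shows that every surviving power of $\sigma$ is strictly positive, so the supremum is attained near $\sigma\simeq r_\varepsilon$ and yields precisely the claimed factors $r_\varepsilon^{d+1}$ and $r_\varepsilon^{3+2d-n/2-\mu}$. The main obstacle is not any analytic subtlety but the bookkeeping of exponents: one must verify that the lower bound $s>(d+1-\delta_1)^{-1}$ of Remark \ref{rem01} is exactly what forces $|w|,|v_i|\ll U$, and that $\mu\in(1,3/2)$ is the precise range needed to keep every exponent of $\sigma$ nonnegative when passing to the supremum.
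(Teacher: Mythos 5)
Your proof is correct and follows the paper's argument in all essentials: the two-sided bound $C_3\varepsilon|x|^{(2-n)/2}\leq u_{\varepsilon,R,a}+w+v_i\leq C_4|x|^{(2-n)/2}$ coming from the lower bound $s>(d+1-\delta_1)^{-1}$ of Remark \ref{rem01}, the case split on the sign of $(6-n)/(n-2)$ that produces $\varepsilon^{\lambda_n}$, and the weight bookkeeping over dyadic annuli all match the paper exactly. The only algebraic variation is how the difference of nonlinear terms is pulled apart — you use the product-rule split $Q(z)-Q(y)=C(v_1-v_0)J(z)+Cy\bigl[J(z)-J(y)\bigr]$ followed by the FTC identity on each piece, whereas the paper obtains in one step the identity $Q_{\varepsilon,R,a}(w+v_1)-Q_{\varepsilon,R,a}(w+v_0)=\frac{n(n+2)}{n-2}(v_1-v_0)\int_0^1\int_0^1(u_{\varepsilon,R,a}+sz_t)^{\frac{6-n}{n-2}}z_t\,ds\,dt$ with $z_t=w+tv_1+(1-t)v_0$; both yield the same pointwise estimate and the same final bounds.
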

\begin{proof} By the hypothesis, we conclude that
$$|v_i(x)|\leq cr_\varepsilon^{2+d-\frac{n}{2}-\delta_1}$$
and
$$|w(x)|\leq cr_\varepsilon^{2+d-\frac{n}{2}}$$
for all $x\in B_{r_\varepsilon}(0)\backslash\{0\}$. Using (\ref{eq105}), we get
$$u_{\varepsilon,R,a}(x)+w+v_i(x) \geq \varepsilon|x|^{\frac{2-n}{2}}(C_1- c(|x|r_\varepsilon^{-1})^{\frac{n-2}{2}}\varepsilon^{s(d+1-\delta_1)-1}),$$
with $s(d+1-\delta_1)-1>0$, since $s>(d+1-\delta_1)^{-1}$. Therefore,
%
%
\begin{equation}\label{eq09}
0<C_3\varepsilon|x|^{\frac{2-n}{2}}\leq u_{\varepsilon,R,a}(x)+w(x)+v_i(x)\leq C_4|x|^{\frac{2-n}{2}}
\end{equation}
for small enough $\varepsilon>0$, since $|x|\leq r_\varepsilon$. Thus, by (\ref{eq44}), we can write
$$Q_{\varepsilon,R,a}(w+v_1)-Q_{\varepsilon,R,a}(w+v_0) =\displaystyle\frac{n(n+2)}{n-2}(v_1-v_0)\int_0^1\int_0^1 (u_{\varepsilon,R,a}+sz_t)^{\frac{6-n}{n-2}}z_tdtds$$
and
$$Q_{\varepsilon,R,a}(w)=\frac{n(n+2)}{n-2}w^2\int_0^1\int_0^1 (u_{\varepsilon,R,a}+stw)^{\frac{6-n}{n-2}}tdtds,$$
where $z_t=w+tv_1+(1-t)v_0$. From this we obtain
$$\|Q_{\varepsilon,R,a}(w+v_1)-Q_{\varepsilon,R,a}(w+v_0) \|_{(0,\alpha),[\sigma,2\sigma]}\leq C\|v_1-v_0\|_{(0,\alpha),[\sigma,2\sigma]} \left(\|w\|_{(0,\alpha),[\sigma,2\sigma]}+\right.$$
$$\left.+\|v_1\|_{(0,\alpha),[\sigma,2\sigma]}+ \|v_0\|_{(0,\alpha),[\sigma,2\sigma]}\right)\displaystyle \max_{0\leq s,t\leq 1}\|(u_{\varepsilon,R,a}+sz_t)^{\frac{6-n}{n-2}} \|_{(0,\alpha),[\sigma,2\sigma]}$$
and
$$\|Q_{\varepsilon,R,a}(w)\|_{(0,\alpha),[\sigma,2\sigma]}\leq C\|w\|_{(0,\alpha),[\sigma,2\sigma]}^2\max_{0\leq s,t\leq 1}\|(u_{\varepsilon,R,a}+stw)^{\frac{6-n}{n-2}}\|_{(0,\alpha),[\sigma,2\sigma]}.$$

From (\ref{eq09}) we deduce that
$$|(u_{\varepsilon,R,a}+sz_t)^{\frac{6-n}{n-2}}(x)|\leq C\varepsilon^{\lambda_n}|x|^{\frac{n-6}{2}}$$
and
$$|(u_{\varepsilon,R,a}+stw)^{\frac{6-n}{n-2}}(x)|\leq C\varepsilon^{\lambda_n}|x|^{\frac{n-6}{2}},$$
for some constant $C>0$ independent of $\varepsilon$, $a$ and $R$.

The estimate for the full H\"older norm is similar. Hence, we conclude that
$$\max_{0\leq s,t\leq 1}\|(u_{\varepsilon,R,a}+ sz_t)^{\frac{6-n}{n-2}}\| _{(0,\alpha),[\sigma,2\sigma]}\leq C\varepsilon^{\lambda_n}\sigma^{\frac{n-6}{2}}$$
and
$$\max_{0\leq s,t\leq 1}\|(u_{\varepsilon,R,a}+ stw)^{\frac{6-n}{n-2}}\| _{(0,\alpha),[\sigma,2\sigma]}\leq C\varepsilon^{\lambda_n}\sigma^{\frac{n-6}{2}}.$$

Therefore,
$$\sigma^{2-\mu}\|Q_{\varepsilon,R,a}(w+v_1)-Q_{\varepsilon,R,a}(w+v_0) \|_{(0,\alpha),[\sigma,2\sigma]}\leq$$
$$\leq C\varepsilon^{\lambda_n}r_\varepsilon^{d+1} \|v_1-v_0\|_{(2,\alpha),\mu,r_\varepsilon} (\|w\|_{(2,\alpha),2+d-\frac{n}{2},r_\varepsilon} +\|v_1\|_{(2,\alpha),\mu,r_\varepsilon}+ \|v_0\|_{(2,\alpha),\mu,r_\varepsilon})$$
and
$$ \sigma^{2-\mu}\|Q_{\varepsilon,R,a}(w)\|_{(0,\alpha),[\sigma,2\sigma]} \leq C\varepsilon^{\lambda_n} r_\varepsilon^{3+2d-\frac{n}{2}-\mu} \|w\|^2_{(2,\alpha),2+d-\frac{n}{2},r_\varepsilon},$$
since $1<\mu<3/2$ implies $2+d-n/2<\mu$ and $3+2d-n/2-\mu>0$.
\end{proof}

Now to use the right inverse of $L_{\varepsilon,R,a}$, given by $G_{\varepsilon,R,r_{\varepsilon},a}$, all terms of the right hand side of the equation (\ref{eq10}) have to belong to the domain of $G_{\varepsilon,R,r_{\varepsilon},a}$. But this does not happen with the term $R_gu_{\varepsilon,R,a}$ if $n\geq 8$, since $R_g=O(|x|^{d-1})$ implies $R_gu_{\varepsilon,R,a}=O(|x|^{d-\frac{n}{2}})$ and so $R_gu_{\varepsilon,R,a}\not\in C_{\mu-2}^{0,\alpha}(B_{r_\varepsilon} (0)\backslash\{0\})$ for every $\mu>1$. However, when $3\leq n\leq 7$ we get the following lemma:
%
%
\begin{lemma}\label{lem05}
Let $3\leq n\leq 7$, $\mu\in(1,3/2)$, $\kappa>0$ and $c>0$ be fixed constants. There exists $\varepsilon_0\in(0,1)$ such that for each $\varepsilon\in(0,\varepsilon_0)$, for all $v\in C_\mu^{2,\alpha}(B_{r_\varepsilon}(0)\backslash\{0\})$ and $\phi\in\pi''(C^{2,\alpha}(\mathbb{S}_{r_\varepsilon}^{n-1}))$ with $\|v\|_{(2,\alpha),\mu,r_\varepsilon}\leq cr_\varepsilon^{2+d-\mu-\frac{n}{2}-\delta_1}$ and $\|\phi\|_{(2,\alpha),r_\varepsilon}\leq \kappa r_\varepsilon^{2+d-\frac{n}{2}-\delta_1}$, we have that the right hand side of (\ref{eq10}) belongs to $C_{\mu-2}^{0,\alpha}(B_{r_\varepsilon}(0)\backslash\{0\})$.
\end{lemma}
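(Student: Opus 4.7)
The plan is to check each of the four summands on the right-hand side of (\ref{eq10}) separately. In the conformal normal coordinates fixed at the beginning of Section \ref{sec02} we have $g_{ij}-\delta_{ij}=O(|x|^2)$, $\sqrt{\det g}=1+O(|x|^N)$ with $N$ as large as we wish, $\mathrm{Ric}_g(p)=0$, and $R_g=O(|x|^2)$ by Theorem \ref{teo07}; this last bound, with no Weyl-vanishing improvement available, is exactly what forces the restriction $n\leq 7$. The sizes of $u_{\varepsilon,R,a}$ and its derivatives come from Corollaries \ref{cor04} and \ref{cor02}, and $v_\phi$ satisfies $\|v_\phi\|_{(2,\alpha),\mu,r_\varepsilon}\leq Cr_\varepsilon^{-\mu}\|\phi\|_{(2,\alpha),r_\varepsilon}$ by (\ref{eq73}).

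For the scalar-curvature term, the worst contribution is $R_g u_{\varepsilon,R,a}=O(|x|^{(6-n)/2})$, which lies in $C^{0,\alpha}_{\mu-2}$ iff $\mu\leq(10-n)/2$; given $\mu<3/2$ this forces exactly $n\leq 7$. The pieces $R_g v_\phi$ and $R_g v$ decay strictly better. The term $u_{\varepsilon,R,a}^{4/(n-2)}v_\phi=O(|x|^{-2})\cdot O(|x|^2 r_\varepsilon^{-2}\|\phi\|)$ is uniformly bounded, hence in $C^{0,\alpha}_{\mu-2}$ because $\mu-2<0$. The nonlinear term $Q_{\varepsilon,R,a}(v_\phi+v)$ is handled by Lemma \ref{lem01} applied with $w=v_0=0$ and $v_1=v_\phi+v$, after observing that the hypothesis on $v$ together with $\|v_\phi\|_{(2,\alpha),\mu,r_\varepsilon}\leq C\kappa r_\varepsilon^{2+d-\mu-n/2-\delta_1}$ (from (\ref{eq73}) and the hypothesis on $\phi$) gives the smallness required by that lemma.

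The delicate summand is $(\Delta-\Delta_g)(u_{\varepsilon,R,a}+v_\phi+v)$. Writing $\Delta_g-\Delta=(g^{ij}-\delta^{ij})\partial_i\partial_j+(\partial_j g^{ij})\partial_i$ modulo $O(|x|^{N-1})\partial$-terms coming from $\sqrt{\det g}$, the $C^{2,\alpha}_\mu$ bound on $v$ yields $(\Delta-\Delta_g)v=O(|x|^\mu)$ and the bound on $v_\phi$ gives $(\Delta-\Delta_g)v_\phi=O(|x|^2 r_\varepsilon^{-2}\|\phi\|)$, both trivially in $C^{0,\alpha}_{\mu-2}$. The subtle piece is $(\Delta-\Delta_g)u_{\varepsilon,R,a}$: a naive estimate gives $O(|x|^{1-n/2})$, which already fails to be in $C^{0,\alpha}_{\mu-2}$ at $n=5$. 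We recover the correct estimate by splitting $u_{\varepsilon,R,a}=u_{\varepsilon,R}+w_1+w_2$ via Corollary \ref{cor02}, where $u_{\varepsilon,R}$ is radial, $w_1=F(r)(a\cdot x)$ with $F(r)=(n-2)u_{\varepsilon,R}(r)+r\partial_r u_{\varepsilon,R}(r)$, and $w_2=O''(|a|^2|x|^{(6-n)/2})$. For the radial piece, Gauss's lemma gives $\mathrm{div}_g(\partial_r)=(n-1)/r+\partial_r\log\sqrt{\det g}=(n-1)/r+O(|x|^{N-1})$, so $(\Delta-\Delta_g)u_{\varepsilon,R}(r)=-u'_{\varepsilon,R}(r)\cdot O(|x|^{N-1})$, negligible for $N$ large. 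For $w_1$, one uses the identities $(g^{ij}-\delta^{ij})x_ix_j=0$ (Gauss) and $(g^{ij}-\delta^{ij})x_i a_j=0$ (from $g^{ij}x_j=x^i$), together with $\mathrm{tr}(g^{-1})-n=O(|x|^3)$ and $\partial_j g^{ij}=O(|x|^2)$ (both coming from $\mathrm{Ric}_g(p)=0$), to expand $(\Delta-\Delta_g)w_1=O(|a||x|^{3-n/2})$, which lies in $C^{0,\alpha}_{\mu-2}$ for $n\leq 7$ and $\mu<3/2$. Finally $(\Delta-\Delta_g)w_2=O(|a|^2|x|^{(6-n)/2})$ is controlled by the same dimensional check as for $R_g u_{\varepsilon,R,a}$.

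The main obstacle is this last step: without the Gauss-lemma and Ricci-vanishing cancellations available in conformal normal coordinates, the term $(\Delta-\Delta_g)u_{\varepsilon,R,a}$ would leave $C^{0,\alpha}_{\mu-2}$ already at $n=5$, so extracting these cancellations is what enables the interior analysis outside the conformally flat setting.
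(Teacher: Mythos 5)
Your proof is correct and gets all four summands under control, but it handles the delicate term $(\Delta-\Delta_g)u_{\varepsilon,R,a}$ by a genuinely different route than the paper. Both proofs split off the radial part $u_{\varepsilon,R}$ and use the conformal-normal-coordinate fact that $\Delta_g=\Delta+O(|x|^N)$ on radial functions, and both invoke Lemma \ref{lem01} with $w=v_0=0$ for the quadratic term. The divergence is in the non-radial remainder. The paper's proof simply writes $u_{\varepsilon,R,a}-u_{\varepsilon,R}=O''(|a||x|^{(4-n)/2})$ and applies the crude bound $(\Delta-\Delta_g)(\cdot)=O(|x|^{d+1})\,\partial^2(\cdot)=O(|x|^{d+(2-n)/2})$, where the exponent $d+1$ in $g_{ij}-\delta_{ij}=O(|x|^{d+1})$ comes from the Weyl-vanishing hypothesis for $n=6,7$; this is a one-line estimate but it silently uses the Weyl assumption. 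You instead keep only the universal $g-\delta=O(|x|^2)$, split the remainder further as $w_1=F(r)\,(a\cdot x)$ plus $w_2=O''(|a|^2|x|^{(6-n)/2})$, and recover the lost power for $w_1$ from the Gauss-lemma identities $(g^{ij}-\delta^{ij})x_j=0$, $\mathrm{tr}(g^{-1})-n=O(|x|^3)$, $\partial_jg^{ij}=O(|x|^2)$ (the latter two from $\mathrm{Ric}_g(p)=0$). Your estimates $(\Delta-\Delta_g)w_1=O(|a||x|^{3-n/2})$ and $(\Delta-\Delta_g)w_2=O(|a|^2|x|^{(6-n)/2})$ are both $\ge O(|x|^{-1/2})$ for $n\le 7$, which beats $|x|^{\mu-2}$ with $\mu<3/2$; I checked the algebra and it goes through. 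The trade-off: the paper's estimate is a single line but leans on the Weyl condition for $n=6,7$ via the refined decay of $h_{ij}$; your argument is heavier algebraically but, for this lemma alone, works for $n=6,7$ without that refinement. One point of emphasis worth correcting in your closing remark: the cancellation that is truly indispensable is the radial one coming from $\det g=1+O(|x|^N)$ plus Gauss's lemma, which both proofs need; once the radial piece is removed, the paper's crude $O(|x|^{d+1})$ estimate already suffices and the additional $w_1$-cancellations you extract, while correct, are optional under the lemma's hypotheses.
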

%
%
\begin{proof} Initially, note that by (\ref{eq73}) we obtain $$\|v_\phi+v\|_{(2,\alpha),\mu,r_\varepsilon}\leq (c+\kappa) r_\varepsilon^{2+d-\mu-\frac{n}{2}-\delta_1},$$
by Lemma \ref{lem01} we get that $Q_{\varepsilon,R,a}(v_\phi+v)\in C_{\mu-2}^{0,\alpha}(B_{r_\varepsilon}(0) \backslash\{0\})$.

Now it is enough to show that the other terms have the decay $O(|x|^{\mu-2})$. 

Using the expansion (\ref{eq37}), it follows that
$$(\Delta-\Delta_g)u_{\varepsilon,R,a}= (\Delta-\Delta_g)u_{\varepsilon,R}+(\Delta-\Delta_g)(u_{\varepsilon,R,a}- u_{\varepsilon,R}),$$
with $u_{\varepsilon,R,a}- u_{\varepsilon,R}=O''(|a||x|^{\frac{4-n}{2}})$. Moreover, since in conformal normal  coordinates $\Delta_g=\Delta+O(|x|^N)$ when applied to functions that depend only on $|x|$, where $N$ can be any big number (see proof of Theorem 3.5 in \cite{SY}, for example), we get
$$(\Delta-\Delta_g)u_{\varepsilon,R}=O(|x|^{N'}),$$
where $N'$ is big for $N$ big.

Since $g_{ij}=\delta_{ij}+O(|x|^{d+1})$, we get
$$(\Delta-\Delta_g)(u_{\varepsilon,R,a}- u_{\varepsilon,R})=O(|x|^{d+\frac{2-n}{2}})= O(|x|^{\mu-2})$$
when $\mu\leq 3+d-3/2$.

Since $v_\phi=O(|x|^2)$, $g_{ij}=\delta_{ij}+O(|x|^{d+1})$, $R_g=O(|x|^2)$, using (\ref{eq105}) we get the same decay for the remaining terms. Hence the assertion follows.
\end{proof}
Now this lemma allows us to use the map $G_{\varepsilon,R,r_\varepsilon,a}$. Let $\mu\in(1,3/2)$ and $c>0$ be fixed constants. To solve the equation (\ref{eq05}) we need to show that the map $\mathcal{N}_\varepsilon(R,a,\phi,\cdot): \mathcal{B}_{\varepsilon,c,\delta_1}\rightarrow C_{\mu}^{2,\alpha}(B_{r_\varepsilon}(0)\backslash\{0\})$ has a fixed point for suitable parameters $\varepsilon$, $R$, $a$ and $\phi$, where $\mathcal{B}_{\varepsilon,c,\delta_1}$ is the ball in $C_{\mu}^{2,\alpha}(B_{r_\varepsilon}(0)\backslash\{0\})$ of radius $cr_\varepsilon^{2+d-\mu-\frac{n}{2}-\delta_1}$ and $\mathcal{N}_\varepsilon(R,a,\phi,\cdot)$ is defined by
%
%
\begin{equation}\label{eq36}
\begin{array}{c}
\mathcal{N}_\varepsilon(R,a,\phi,v) = G_{\varepsilon,R,r,a}\left( (\Delta-\Delta_g) v +\displaystyle\frac{n-2}{4(n-1)}R_gv- Q_{\varepsilon,R,a}(v_{\phi}+v)\right.\\
+ (\Delta-\Delta_g) (u_{\varepsilon,R,a}+v_\phi)+ \displaystyle\frac{n-2}{4(n-1)}R_g(u_{\varepsilon,R,a}+v_\phi)- \left.\displaystyle\frac{n(n+2)}{4}u_{\varepsilon,R,a}^{\frac{4}{n-2}} v_\phi\right).
\end{array}
\end{equation}

Let us now consider $n\geq 8$. Since $R_g=O(|x|^{d-1})$, we have $R_gu_{\varepsilon,R,a}=O(|x|^{d-\frac{n}{2}})$, and this implies that $R_gu_{\varepsilon,R,a}\not\in C_{\mu-2}^{0,\alpha}(B_{r_\varepsilon} (0)\backslash\{0\})$ for $\mu>1$. Hence we cannot use $G_{\varepsilon,R,{r_\varepsilon},a}$ directly. To overcome this difficulty we will consider the expansion (\ref{eq37}), the expansion (\ref{eq70}) and use the fact that $\partial_i\partial_jh_{ij}$ is orthogonal to $\{1,x_1,\ldots,x_n\}$ modulo a term of order $O(|x|^{N''})$ with $N''$ as big as we want (see Lemma \ref{lem11}.) 

It follows from this fact and Corollary \ref{cor03}, that there exists $w_{\varepsilon,R}\in C_{2+d-\frac{n}{2}}^{2,\alpha}(B_{r_\varepsilon} (0)\backslash\{0\})$ such that
%
%
\begin{equation}\label{eq35}
L_{\varepsilon,R}(w_{\varepsilon,R})=\frac{n-2}{4(n-1)} \pi''(\partial_i\partial_j h_{ij})u_{\varepsilon,R}.
\end{equation}
This is because $u_{\varepsilon,R}$ depends only on $|x|$.

Again by Corollary \ref{cor03}
%
%
\begin{equation}\label{eq57}
\|w_{\varepsilon,R}\|_{(2,\alpha),2+d-\frac{n}{2},r_\varepsilon} \leq c \|\pi''(\partial_i\partial_j h_{ij}) u_{\varepsilon,R} \|_{(0,\alpha),d-\frac{n}{2},r_\varepsilon}\leq c,
\end{equation}
for some constant $c>0$ that does not depend on $\varepsilon$ and $R$, since $\partial_i\partial_jh_{ij}u_{\varepsilon,R}=O(|x|^{d-\frac{n}{2}})$.

Considering the expansion (\ref{eq37}) and substituting $v$ for $w_{\varepsilon,R}+v$ in the equation (\ref{eq10}), we obtain
%
%
\begin{equation}\label{eq07}
\begin{array}{c}
\hspace{-1cm} L_{\varepsilon,R,a}(v) =  (\Delta-\Delta_g)(u_{\varepsilon,R,a}+w_{\varepsilon,R}+v_\phi+v)+ \displaystyle\frac{n-2}{4(n-1)}R_g(w_{\varepsilon,R}+v_\phi+v)\\
\vspace{-0,2cm}\\
\hspace{-1cm}- Q_{\varepsilon,R,a}(w_{\varepsilon,R}+v_\phi+v) + \displaystyle\frac{n-2}{4(n-1)}\partial_i\partial_jh_{ij}(u_{\varepsilon,R,a}- u_{\varepsilon,R})\\
\vspace{-0,2cm}\\
\hspace{1cm}+ \displaystyle \frac{n-2}{4(n-1)} (R_g-\partial_i\partial_jh_{ij})u_{\varepsilon,R,a} + \displaystyle\frac{n(n+2)}{2}(u_{\varepsilon,R}^{\frac{4}{n-2}}- u_{\varepsilon,R,a}^{\frac{4}{n-2}})w_{\varepsilon,R}\\
\vspace{-0,2cm}
 - \displaystyle\frac{n(n+2)}{4}u_{\varepsilon,R,a}^{\frac{4}{n-2}}v_\phi + \displaystyle \frac{n-2}{4(n-1)}\overline{h}u_{\varepsilon,R}
\end{array}
\end{equation}
where $R_g-\partial_i\partial_jh_{ij}=O(|x|^{n-3})$, $u_{\varepsilon,R,a}-u_{\varepsilon,R}=O(|a||x|^{\frac{4-n}{2}})$, $u_{\varepsilon,R,a}^{\frac{4}{n-2}}- u_{\varepsilon,R}^{\frac{4}{n-2}}=O(|a||x|^{-1})$ by the proof of Corollary \ref{cor01}, and $\overline{h}=\partial_i\partial_j h_{ij}- \pi''(\partial_i\partial_j h_{ij})=O(|x|^{N''})$
with $N''$ large. Hence we obtain the following lemma
%
%
\begin{lemma}\label{lem09}
Let $n\geq 8$, $\mu\in(1,3/2)$, $\kappa>0$ and $c>0$ be fixed constants. There exists $\varepsilon_0\in(0,1)$ such that for each $\varepsilon\in(0,1)$, for all $v\in C_\mu^{2,\alpha}(B_{r_\varepsilon}(0)\backslash\{0\})$ and $\phi\in \pi''(C^{2,\alpha}(\mathbb{S}_{r_\varepsilon}^{n-1}))$ with $\|v\|_{(2,\alpha),\mu,r_\varepsilon}\leq c{r_\varepsilon}^{2+d-\mu-\frac{n}{2}-\delta_1}$ and $\|\phi\|_{(2,\alpha),r_\varepsilon}\leq \kappa r_\varepsilon^{2+d-\frac{n}{2}-\delta_1}$, we have that the right hand side of (\ref{eq07}) belongs to $C_{\mu-2}^{0,\alpha}(B_{r_\varepsilon} (0)\backslash\{0\})$.
\end{lemma}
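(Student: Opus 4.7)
My plan is to verify, summand by summand, that each of the eight pieces on the right-hand side of (\ref{eq07}) defines a function in $C^{0,\alpha}_{\mu-2}(B_{r_\varepsilon}(0)\backslash\{0\})$. Since $\mu-2\in(-1,-1/2)$, this reduces to showing pointwise weighted decay of the form $|f(x)|\leq C|x|^{\beta}$ with $\beta\geq\mu-2$ (and analogous control of the H\"older seminorm). The conceptual content of the decomposition (\ref{eq07}) is that the troublesome term $R_g\,u_{\varepsilon,R,a}$, which is only $O(|x|^{d-n/2})$ and therefore \emph{outside} $C^{0,\alpha}_{\mu-2}$ for $n\geq 8$, has been eliminated by the preliminary correction (\ref{eq35}): its leading piece $\pi''(\partial_i\partial_j h_{ij})u_{\varepsilon,R}$ is absorbed into $L_{\varepsilon,R}(w_{\varepsilon,R})$, and Lemma \ref{lem11} makes the residual low-eigenmode term $\overline{h}\,u_{\varepsilon,R}$ of order $O(|x|^{N''})$ for $N''$ arbitrarily large.

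I first collect the basic decay ingredients. In conformal normal coordinates the Weyl-vanishing hypothesis gives $h_{ij}=O(|x|^{d+1})$, hence $g_{ij}-\delta_{ij}=O(|x|^{d+1})$ and schematically
$$(\Delta-\Delta_g)f = O(|x|^{d+1})\,\nabla^2 f + O(|x|^{d})\,\nabla f,$$
so that $\Delta-\Delta_g$ maps $C^{2,\alpha}_\beta$ into $C^{0,\alpha}_{d-1+\beta}$. By Proposition \ref{propo08} and (\ref{eq70}), $R_g=O(|x|^{d-1})$ and $R_g-\partial_i\partial_j h_{ij}=O(|x|^{n-3})$. From Corollary \ref{cor02} and the proof of Corollary \ref{cor01}, $u_{\varepsilon,R,a}-u_{\varepsilon,R}=O''(|a||x|^{(4-n)/2})$ and $u_{\varepsilon,R,a}^{4/(n-2)}-u_{\varepsilon,R}^{4/(n-2)}=O(|a||x|^{-1})$. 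Estimate (\ref{eq57}) gives $w_{\varepsilon,R}\in C^{2,\alpha}_{2+d-n/2}$ with norm uniform in $\varepsilon,R$, and (\ref{eq73}) gives $v_\phi\in\pi''(C^{2,\alpha}_2)$ with $\|v_\phi\|_{(2,\alpha),2,r_\varepsilon}\leq Cr_\varepsilon^{-2}\|\phi\|_{(2,\alpha),r_\varepsilon}$.

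With these in hand the verification is routine power counting. Lemma \ref{lem01} disposes directly of the nonlinear remainder $Q_{\varepsilon,R,a}(w_{\varepsilon,R}+v_\phi+v)$. For $(\Delta-\Delta_g)(u_{\varepsilon,R,a}+w_{\varepsilon,R}+v_\phi+v)$ I split off the radial piece $u_{\varepsilon,R}$, which contributes $O(|x|^{N'})$ because $\Delta-\Delta_g$ annihilates radial functions to arbitrarily high order in conformal normal coordinates, and apply the schematic formula above to each of the remaining pieces; the worst exponent arises from $u_{\varepsilon,R,a}-u_{\varepsilon,R}$ and equals $d+1-n/2$. The scalar curvature terms $R_g(w_{\varepsilon,R}+v_\phi+v)$ are handled analogously using $R_g=O(|x|^{d-1})$. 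The explicit algebraic pieces decay as $O(|a||x|^{d+1-n/2})$ for $\partial_i\partial_j h_{ij}(u_{\varepsilon,R,a}-u_{\varepsilon,R})$, as $O(|x|^{(n-4)/2})$ for $(R_g-\partial_i\partial_j h_{ij})u_{\varepsilon,R,a}$, as $O(|a||x|^{1+d-n/2})$ for $(u_{\varepsilon,R}^{4/(n-2)}-u_{\varepsilon,R,a}^{4/(n-2)})w_{\varepsilon,R}$, as a uniformly bounded function for $u_{\varepsilon,R,a}^{4/(n-2)}v_\phi$ (after using the bound on $\|\phi\|$), and as $O(|x|^{N''+(2-n)/2})$ for $\overline{h}\,u_{\varepsilon,R}$. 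For $n\geq 8$ and $d=[(n-2)/2]$ one computes $d+1-n/2\in\{0,-1/2\}$ (value $0$ for $n$ even, $-1/2$ for $n$ odd), which in every case satisfies $d+1-n/2>\mu-2$ precisely because $\mu<3/2$. This borderline case is the only technical point, and it is the real obstacle: it is the single place in the argument that forces the upper restriction $\mu<3/2$, and together with the hypothesis $n\geq 8$ (which gives $d-1\geq\lfloor n/2\rfloor-2\geq 2$ so that $R_g$ itself is tame enough to pair with each remaining factor) it closes the power count.
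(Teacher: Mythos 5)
Your proof is correct and follows the same term-by-term power-counting strategy as the paper, which dispatches this lemma in three lines by invoking Lemma \ref{lem05} for the $Q$-term, recording $(\Delta-\Delta_g)u_{\varepsilon,R,a}=O(|x|^{1+d-n/2})$, and asserting that the remaining terms obey the same estimate. Your more explicit count correctly identifies that the binding exponent $d+1-n/2\in\{0,-1/2\}$ is exactly compatible with $\mu-2\in(-1,-1/2)$, and correctly explains how the correction $w_{\varepsilon,R}$ from (\ref{eq35}) together with Lemma \ref{lem11} removes the obstructive term $R_g\,u_{\varepsilon,R,a}=O(|x|^{d-n/2})$.
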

\begin{proof}

As before in Lemma \ref{lem05}, we obtain 
$$Q_{\varepsilon,R,a} (w_{\varepsilon,R}+v_\phi+v)\in C_{\mu-2}^{0,\alpha}(B_{r_\varepsilon}(0)\backslash\{0\})$$
and
$$(\Delta-\Delta_g)u_{\varepsilon,R,a}=O(|x|^{1+d-\frac{n}{2}})= O(|x|^{\mu-2}).$$

Therefore, the assertion follows, since for the remaining terms we obtain the same estimate.
\end{proof}
%
%

Let $\mu\in(1,3/2)$ and $c>0$ be fixed constants. It is enough to show that the map $\mathcal{N}_\varepsilon(R,a,\phi,\cdot): \mathcal{B}_{\varepsilon,c,\delta_1}\rightarrow C_{\mu}^{2,\alpha}(B_r(0)\backslash\{0\})$ has a fixed point for suitable parameters $\varepsilon$, $R$, $a$ and $\phi$, where $\mathcal{B}_{\varepsilon,c,\delta_1}$ is the ball in $C_{\mu}^{2,\alpha}(B_r(0)\backslash\{0\})$ of radius $cr_\varepsilon^{2+d-\mu-\frac{n}{2}-\delta_1}$ and $\mathcal{N}_\varepsilon(R,a,\phi,\cdot)$ is defined by
%
%

\begin{equation}\label{eq59}
\begin{array}{c}
\mathcal{N}_\varepsilon(R,a,\phi,v) = \displaystyle G_{\varepsilon,R,r,a}\left( (\Delta-\Delta_g) v + \displaystyle\frac{n-2}{4(n-1)}R_gv - Q_{\varepsilon,R,a}(v_{\phi}+w_{\varepsilon,R}+v)\right.\\
\vspace{-0,2cm}\\
+ (\Delta-\Delta_g) (u_{\varepsilon,R,a}+v_{\phi}+w_{\varepsilon,R})+ \displaystyle\frac{n-2}{4(n-1)}R_g(v_{\phi}+w_{\varepsilon,R})\\
\vspace{-0,2cm}\\
+ \displaystyle \frac{n-2}{4(n-1)}(R_g-\partial_i\partial_jh_{ij})u_{\varepsilon,R,a} - \displaystyle\frac{n(n+2)}{4} u_{\varepsilon,R,a}^{\frac{4}{n-2}}v_\phi\\
\vspace{-0,2cm}\\
\hspace{1cm}+\displaystyle\frac{n(n+2)}{2}(u_{\varepsilon,R}^{\frac{4}{n-2}}- u_{\varepsilon,R,a}^{\frac{4}{n-2}})w_{\varepsilon,R} + \displaystyle\frac{n-2}{4(n-1)}\overline{h}u_{\varepsilon,R}\\
\\
\hspace{1cm}+ \left.\displaystyle\frac{n-2}{4(n-1)}\partial_i\partial_jh_{ij}(u_{\varepsilon,R,a}- u_{\varepsilon,R})\right).
\end{array}
\end{equation}

In fact, we will show that the map $\mathcal{N}_\varepsilon(R,a,\phi,\cdot)$ is a contraction for small enough $\varepsilon>0$, and as a consequence of this we will get that the fixed point is continuous with respect to the parameters $\varepsilon$, $R$, $a$ and $\phi$. 
\begin{remark}\label{remark02}
The vanishing of the Weyl tensor up to the order $d-2$ is sharp, in the following sense: if $\nabla^lW_g(0)=0,$ $l=0,1,\ldots, d-3$, then for $n\geq 6$, $g_{ij}=\delta_{ij}+O(|x|^d)$ and
$$(\Delta-\Delta_g)u_{\varepsilon,R,a}=O(|x|^{d-\frac{n}{2}}).$$
This implies $(\Delta-\Delta_g)u_{\varepsilon,R,a}\not\in C^{0,\alpha}_{\mu-2}(B_{r_\varepsilon}(0)\backslash\{0\})$, with $\mu>1$.
\end{remark}
The next lemma will be very useful to show Proposition \ref{propo03}. To prove it use the Laplacian in local coordinates.
%
%
\begin{lemma}\label{lem02}
Let $g$ be a metric in $B_r(0)\subset\mathbb{R}^n$ in conformal normal coordinates with the Weyl tensor satisfying the assumption (\ref{eq53}). Then, for all $\mu\in\mathbb{R}$ and $v\in C_{\mu}^{2,\alpha}(B_r(0)\backslash\{0\})$ there is a constant $c>0$ that does not depend on $r$ and $\mu$ such that
$$\|(\Delta-\Delta_g)(v)\|_{(0,\alpha),\mu-2,r}\leq cr^{d+1} \|v\|_{(2,\alpha),\mu,r}.$$
\end{lemma}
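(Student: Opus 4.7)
The strategy is to expand $\Delta - \Delta_g$ in the given coordinate system, observe that the Weyl vanishing hypothesis forces the coefficients to vanish to high order at the origin, and then compare scales on dyadic annuli $[\sigma, 2\sigma]$.

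\textbf{Step 1: Taylor expansion of the metric.} In conformal normal coordinates satisfying (\ref{eq53}), a standard computation (see Brendle \cite{B}) yields $g_{ij}(x) = \delta_{ij} + O(|x|^{d+1})$, hence $g^{ij}(x) = \delta^{ij} + O(|x|^{d+1})$ and $\partial_k g^{ij}(x) = O(|x|^d)$. By Theorem \ref{teo07} we also have $\det g = 1 + O(|x|^N)$, so $\partial_k \log \det g = O(|x|^{N-1})$, which for large $N$ is negligible compared with the other terms.

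\textbf{Step 2: Expansion of the operator difference.} Writing $\Delta_g v = \frac{1}{\sqrt{\det g}}\,\partial_i(\sqrt{\det g}\, g^{ij}\,\partial_j v)$ in local coordinates, one gets
\begin{equation*}
(\Delta_g - \Delta) v \;=\; (g^{ij} - \delta^{ij})\,\partial_i\partial_j v \;+\; b^j\,\partial_j v,
\end{equation*}
where $b^j = \partial_i g^{ij} + \tfrac{1}{2}g^{ij}\,\partial_i\log\det g = O(|x|^d)$, and the leading coefficient satisfies $g^{ij} - \delta^{ij} = O(|x|^{d+1})$. The constants in these estimates depend only on finitely many $C^N$-norms of the $h_{ij}$ on the fixed ball $B_{r_1}(0)$, and in particular are independent of $r, \mu, v$.

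\textbf{Step 3: Dyadic scaling.} Fix $0 < \sigma \leq r/2$. By definition of the weighted norm, on the annulus $[\sigma,2\sigma]$ one has $|\partial^k v| \lesssim \sigma^{\mu-k}\|v\|_{(2,\alpha),\mu,r}$ for $k = 0,1,2$, with the corresponding Hölder seminorm of $\partial^2 v$ bounded by $\sigma^{\mu-2-\alpha}\|v\|_{(2,\alpha),\mu,r}$. Combining with Step 2, each summand in $(\Delta-\Delta_g)v$ is dominated on $[\sigma,2\sigma]$ in $C^{0,\alpha}$-norm by
\begin{equation*}
\sigma^{d+1}\cdot \sigma^{\mu-2}\,\|v\|_{(2,\alpha),\mu,r} + \sigma^{d}\cdot \sigma^{\mu-1}\,\|v\|_{(2,\alpha),\mu,r} \;\leq\; C\,\sigma^{d+\mu-1}\,\|v\|_{(2,\alpha),\mu,r},
\end{equation*}
where $C$ is independent of $\sigma$, $r$, $\mu$. (The Hölder seminorm on $[\sigma,2\sigma]$ is handled in the same way, using that each factor in the product rule contributes a matching power of $\sigma$.)

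\textbf{Step 4: Conclusion.} Multiplying by $\sigma^{-(\mu-2)}$ and using $\sigma \leq r/2$ gives
\begin{equation*}
\sigma^{-(\mu-2)}\,\|(\Delta-\Delta_g)v\|_{(0,\alpha),[\sigma,2\sigma]} \;\leq\; C\,\sigma^{d+1}\,\|v\|_{(2,\alpha),\mu,r} \;\leq\; C\,r^{d+1}\,\|v\|_{(2,\alpha),\mu,r}.
\end{equation*}
Taking the supremum over $\sigma \in (0, r/2]$ yields the claimed estimate. The only real subtlety is confirming that the constant is uniform in $\mu$ and $r$, which is automatic once one notes that the weighted norm absorbs all factors of $\sigma^{\mu}$ evenly and that the coefficient bounds in Step 1 come from a fixed compact neighborhood of $p$. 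There is no serious analytic obstacle here; this is essentially a bookkeeping lemma whose content is that the Weyl vanishing hypothesis upgrades a routine $O(|x|^2)$ bound on $\Delta - \Delta_g$ to an $O(|x|^{d+1})$ bound.
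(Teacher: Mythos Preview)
Your proof is correct and follows exactly the approach the paper indicates (``use the Laplacian in local coordinates''); the paper omits all details, and you have supplied them accurately by combining the metric expansion $g_{ij}=\delta_{ij}+O(|x|^{d+1})$ with the dyadic definition of the weighted norm.
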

%
%
\subsection{Complete Delaunay-type ends}\label{sec12}
The previous discussion tells us that to solve the equation (\ref{eq05}) with  prescribed boundary data on a small sphere centered at $0$, we have to show that the map $\mathcal{N}_\varepsilon(R,a,\phi,\cdot)$, defined in (\ref{eq36}) for $3\leq n\leq 7$ and in (\ref{eq59}) for $n\geq 8$, has a fixed point. To do this, we will show that this map is a contraction using the fact that the right inverse $G_{\varepsilon,R,r_\varepsilon,a}$ of $L_{\varepsilon,R,a}$ in the punctured ball $B_{r_\varepsilon}(0)\backslash\{0\}$, given by Corollary \ref{cor01}, has norm bounded independently of $\varepsilon$, $R$, $a$ and $r_\varepsilon$.

Next  we will prove the main result of this section. This will solve the singular Yamabe problem locally.
\begin{remark}\label{remark03}
To ensure some estimates that we will need, from now on, we will consider $R^{\frac{2-n}{2}}=2(1+b)\varepsilon^{-1}$, with $|b|\leq 1/2$.
\end{remark}
%
%
\begin{proping}\label{propo03}
Let $\mu \in (1,5/4)$, $\tau>0$, $\kappa>0$ and $\delta_2>\delta_1$ be fixed constants. There exists a constant $\varepsilon_0\in(0,1)$ such that for each $\varepsilon\in(0,\varepsilon_0]$, $|b|\leq 1/2$, $a\in\mathbb{R}^n$ with $|a|r_\varepsilon^{1-\delta_2}\leq 1$, and $\phi\in\pi''(C^{2,\alpha}(\mathbb{S}_{r_{\varepsilon}}^{n-1}))$ with $\|\phi\|_{(2,\alpha),r_\varepsilon}\leq \kappa r_\varepsilon^{2+d-\frac{n}{2}-\delta_1}$, there exists a fixed point of the map $\mathcal{N}_\varepsilon(R,a,\phi,\cdot)$ in the ball of radius $\tau r_\varepsilon^{2+d-\mu-\frac{n}{2}}$ in $C_{\mu}^{2,\alpha}(B_{r_\varepsilon}(0)\backslash\{0\})$.
\end{proping}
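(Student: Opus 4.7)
The plan is to apply the Banach fixed point theorem to $\mathcal{N}_\varepsilon(R,a,\phi,\cdot)$ on the closed ball $\mathcal{B}_{\varepsilon,\tau} = \{v : \|v\|_{(2,\alpha),\mu,r_\varepsilon} \le \tau r_\varepsilon^{2+d-\mu-\frac{n}{2}}\}$ in $C^{2,\alpha}_\mu(B_{r_\varepsilon}(0)\setminus\{0\})$. Since $\delta_1<\delta_2$, for $\varepsilon$ small enough this ball sits inside the ball where the estimates of Lemma \ref{lem01} apply, and by (\ref{eq73}) and the hypothesis on $\phi$, $v_\phi$ also lies in such a ball. The only structural tools needed are the uniform bound $\|G_{\varepsilon,R,r_\varepsilon,a}\|\le K$ from Corollary \ref{cor01}, the Poisson estimate (\ref{eq73}), the interior bound $\|w_{\varepsilon,R}\|_{(2,\alpha),2+d-\frac{n}{2},r_\varepsilon}\le c$ from (\ref{eq57}), Lemma \ref{lem02} for $(\Delta-\Delta_g)$, Lemma \ref{lem01} for $Q_{\varepsilon,R,a}$, and the improved decay $R_g-\partial_i\partial_j h_{ij}=O(|x|^{n-3})$ coming from the Weyl vanishing hypothesis.

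For the contraction estimate I would take $v_0,v_1\in\mathcal{B}_{\varepsilon,\tau}$ and apply $K$ to $\|\cdot\|_{(0,\alpha),\mu-2,r_\varepsilon}$-norms of the three terms of (\ref{eq59}) that depend on $v$: Lemma \ref{lem02} gives $\|(\Delta-\Delta_g)(v_1-v_0)\|\le c r_\varepsilon^{d+1}\|v_1-v_0\|$; the $R_g$ pointwise bound ($O(|x|^{d-1})$ for $n\ge 8$, $O(|x|^2)$ for $n\le 7$) gives the same kind of factor; and Lemma \ref{lem01} controls the $Q_{\varepsilon,R,a}$ difference by $C\varepsilon^{\lambda_n}r_\varepsilon^{d+1}$ times $\|v_1-v_0\|$ (the sum $\|w_{\varepsilon,R}\|+\|v_\phi\|+\|v_i\|$ being uniformly bounded by the choice of $\mathcal{B}_{\varepsilon,\tau}$). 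All three factors tend to $0$ as $\varepsilon\to 0$, so the Lipschitz constant is $\le 1/2$ for $\varepsilon_0$ small enough.

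For the self-mapping property I would bound $\|\mathcal{N}_\varepsilon(R,a,\phi,0)\|_{(2,\alpha),\mu,r_\varepsilon}$ term by term in (\ref{eq59}), using again the bound $K$ on $G_{\varepsilon,R,r_\varepsilon,a}$. Writing each term as an element of $C^{0,\alpha}_{\mu-2}$: $(\Delta-\Delta_g)u_{\varepsilon,R,a}=O(|x|^{1+d-\frac{n}{2}})$ and $(\Delta-\Delta_g)(v_\phi+w_{\varepsilon,R})$ contribute via Lemma \ref{lem02} with the improvement $r_\varepsilon^{d+1}$; the term $(R_g-\partial_i\partial_j h_{ij})u_{\varepsilon,R,a}=O(|x|^{\frac{n-4}{2}})$ by Proposition \ref{propo08} and the Weyl hypothesis (this is precisely why $w_{\varepsilon,R}$ has been introduced to cancel the offending $\pi''(\partial_i\partial_j h_{ij})u_{\varepsilon,R}$); $\partial_i\partial_j h_{ij}(u_{\varepsilon,R,a}-u_{\varepsilon,R})=O(|a||x|^{d+1-\frac{n}{2}})$ from Corollary \ref{cor02}; $(u_{\varepsilon,R}^{4/(n-2)}-u_{\varepsilon,R,a}^{4/(n-2)})w_{\varepsilon,R}=O(|a||x|^{d+1-\frac{n}{2}})$ from (\ref{eq76}); $u_{\varepsilon,R,a}^{4/(n-2)}v_\phi=O(\|\phi\|r_\varepsilon^{-2}|x|^{0})$ controlled by $\|\phi\|\le\kappa r_\varepsilon^{2+d-\frac n2-\delta_1}$; $\bar h u_{\varepsilon,R}$ negligible since $\bar h=O(|x|^{N''})$; and $Q_{\varepsilon,R,a}(v_\phi+w_{\varepsilon,R})$ is bounded by $C\varepsilon^{\lambda_n}r_\varepsilon^{3+2d-\frac{n}{2}-\mu}$ via Lemma \ref{lem01}. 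Collecting, each contribution is absorbed by $\tau r_\varepsilon^{2+d-\mu-\frac{n}{2}}$ once $\tau$ is chosen large and $\varepsilon_0$ small, using also $|a|r_\varepsilon^{1-\delta_2}\le 1$. The case $3\le n\le 7$ is identical but shorter: one uses formula (\ref{eq36}) and works without $w_{\varepsilon,R}$, replacing the $R_g-\partial_i\partial_j h_{ij}$ term by the direct estimate $R_g u_{\varepsilon,R,a}=O(|x|^{1+\frac{2-n}{2}})$ which lies in $C^{0,\alpha}_{\mu-2}$ when $n\le 7$.

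The main obstacle is the self-mapping estimate for the terms that record the failure of $g$ to be flat and of $u_{\varepsilon,R,a}$ to be radial: concretely, the interplay between the radius $\tau r_\varepsilon^{2+d-\mu-\frac{n}{2}}$ required by the conclusion, the exponent $\delta_1$ built into the hypotheses of Lemmas \ref{lem01} and \ref{lem09}, and the smaller exponent $\delta_2$ appearing in $|a|r_\varepsilon^{1-\delta_2}\le 1$. The arithmetic has to be done carefully so that every term picks up a positive power of $r_\varepsilon$ or a power of $\varepsilon$ beyond what the radius requires; the margin is exactly what the Weyl vanishing hypothesis (through $h_{ij}=O(|x|^{d+1})$ and $R_g-\partial_i\partial_j h_{ij}=O(|x|^{n-3})$) provides. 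Continuity of the fixed point in $(R,a,\phi)$ then follows automatically from the contraction property, which is needed later in Section \ref{sec05}.
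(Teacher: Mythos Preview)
Your overall architecture is right and matches the paper: contraction mapping on $\mathcal{B}_{\varepsilon,\tau}$, with the contraction constant handled by Lemma~\ref{lem02}, the $R_g$ decay, and Lemma~\ref{lem01}, and the self-map by bounding $\mathcal{N}_\varepsilon(R,a,\phi,0)$ term by term. Two points, however, need correction.

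First, a minor one: $\tau$ is \emph{given} in the statement, not chosen. The proof must work for every fixed $\tau>0$, so each term in $\|\mathcal{N}_\varepsilon(R,a,\phi,0)\|$ has to be bounded by $c\,r_\varepsilon^{\delta}\cdot r_\varepsilon^{2+d-\mu-\frac n2}$ with some $\delta>0$ independent of $\varepsilon$; then $\varepsilon_0=\varepsilon_0(\tau)$ is chosen so that $c\,r_\varepsilon^\delta<\tau/(2N)$. Saying ``once $\tau$ is chosen large'' misses this.

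Second, and this is a genuine gap: your estimate for $u_{\varepsilon,R,a}^{4/(n-2)}v_\phi$ is too crude to close the argument. Using only $u_{\varepsilon,R,a}^{4/(n-2)}=O(|x|^{-2})$ and $|v_\phi(x)|\le C(|x|/r_\varepsilon)^2\|\phi\|$ gives a pointwise bound $O(r_\varepsilon^{-2}\|\phi\|)$, hence
\[
\bigl\|u_{\varepsilon,R,a}^{4/(n-2)}v_\phi\bigr\|_{(0,\alpha),\mu-2,r_\varepsilon}
\le C\,r_\varepsilon^{-\mu}\|\phi\|
\le C\kappa\,r_\varepsilon^{-\delta_1}\cdot r_\varepsilon^{2+d-\mu-\frac n2},
\]
and the factor $r_\varepsilon^{-\delta_1}$ \emph{blows up} as $\varepsilon\to 0$. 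This is precisely the place where the hypothesis $R^{\frac{2-n}{2}}=2(1+b)\varepsilon^{-1}$ with $|b|\le\tfrac12$ enters (and is the only place it is really used in this proposition). The paper splits the punctured ball into $\{r_\varepsilon^{1+\lambda}\le|x|\le r_\varepsilon\}$ and $\{|x|\le r_\varepsilon^{1+\lambda}\}$ for a suitable $\lambda>0$: on the outer annulus one has $-\log|x|+\log R\le 0$ and, from $v_\varepsilon(t)\le\varepsilon e^{\frac{n-2}{2}|t|}$ together with the choice of $R$, the sharpened bound $u_{\varepsilon,R}^{4/(n-2)}(x)\le C|x|^{-2}r_\varepsilon^{2}$, which produces the needed extra factor $r_\varepsilon^{2-\delta_1}$; on the inner region the crude bound $|x|^{-2}$ is used but now $\sigma^{2-\mu}\le r_\varepsilon^{(2-\mu)(1+\lambda)}$ supplies a factor $r_\varepsilon^{(2-\mu)\lambda-\delta_1}$, positive for $\lambda$ in the allowed range coming from $s<4(d-2+3n/2)^{-1}$ and $\delta_1<(8n-16)^{-1}$. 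Without this refined treatment the self-mapping inequality fails. (A smaller but related point: for the term $(\Delta-\Delta_g)u_{\varepsilon,R,a}$ you should also split off the radial part $u_{\varepsilon,R}$, since in conformal normal coordinates $\Delta_g=\Delta+O(|x|^N)$ on radial functions; the naive bound from $g_{ij}=\delta_{ij}+O(|x|^{d+1})$ alone would only give $O(|x|^{d-\frac n2})$, not $O(|x|^{1+d-\frac n2})$.)
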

%
%
\begin{proof} First note that $|a|r_\varepsilon\leq r_\varepsilon^{\delta_2}\rightarrow 0$ when $\varepsilon$ tends to zero. It follows from Corollary \ref{cor01}, Lemma \ref{lem05} and \ref{lem09} that the map $\mathcal{N}_\varepsilon(R,a,\phi,\cdot)$ is well defined in the ball of radius $\tau r_\varepsilon^{2+d-\mu-\frac{n}{2}}$ in $C_\mu^{2,\alpha}(B_{r_\varepsilon}(0)\backslash\{0\})$ for small $\varepsilon>0$.

Following \cite{AB} we will show that
\begin{equation}\label{eq112}
\|\mathcal{N}_\varepsilon(R,a,\phi,0)\|_{(2,\alpha),\mu,r_\varepsilon}< \frac{1}{2}\tau r_\varepsilon^{2+d-\mu-\frac{n}{2}},
\end{equation}
and for all $v_i\in C_{\mu}^{2,\alpha}(B_{r_\varepsilon}(0)\backslash\{0\})$ with $\|v_i\|_{(2,\alpha),\mu,r_\varepsilon} \leq \tau r_\varepsilon^{2+d-\mu-\frac{n}{2}}$, $i=1,2$, we will have
\begin{equation}\label{eq113}
\|\mathcal{N}_\varepsilon(R,a,\phi,v_1)- \mathcal{N}_\varepsilon(R,a,\phi,v_2)\|_{(2,\alpha),\mu,r_\varepsilon} <\frac{1}{2} \|v_1-v_2\|_{(2,\alpha),\mu,r_\varepsilon}.
\end{equation}
It will follow from this that for all $v\in C_{\mu}^{2,\alpha}(B_{r_\varepsilon}(0)\backslash\{0\})$ in the ball of radius $\tau r_\varepsilon^{2+d-\mu-\frac{n}{2}}$ we will get
$$\|\mathcal{N}_\varepsilon(R,a,\phi,v)\|_{(2,\alpha),\mu,r_\varepsilon} \leq \|\mathcal{N}_\varepsilon(R,a,\phi,v)- \mathcal{N}_\varepsilon(R,a,\phi,0)\|_{(2,\alpha),\mu,r_\varepsilon}+ \|\mathcal{N}_\varepsilon(R,a,\phi,0)\|_{(2,\alpha),\mu,r_\varepsilon}.$$
Hence we conclude that the map $\mathcal{N}_\varepsilon(R,a,\phi,\cdot)$ will have a fixed point  belonging to the ball of radius $\tau r_\varepsilon^{2+d-\mu-\frac{n}{2}}$ in $C_\mu^{2,\alpha}(B_{r_\varepsilon}(0)\backslash\{0\})$.

Consider $3\leq n\leq 7$. 

Since $G_{\varepsilon,R,r_{\varepsilon},a}$ is bounded independently of $\varepsilon$, $R$ and $a$, it follows that
$$ \|\mathcal{N}_\varepsilon(R,a,\phi,0)\|_{(2,\alpha),\mu,r_{\varepsilon}} \leq c\left(\|(\Delta-\Delta_g)(u_{\varepsilon,R,a}+v_{\phi}) \|_{(0,\alpha),\mu-2,r_{\varepsilon}}\right.$$
$$\left.+ \|R_g(u_{\varepsilon,R,a}+v_{\phi})\|_{(0,\alpha),\mu-2,r_{\varepsilon}}+ \|Q_{\varepsilon,R,a}(v_{\phi})\|_{(0,\alpha),\mu-2,r_{\varepsilon}}+ \|u_{\varepsilon,R,a}^{\frac{4}{n-2}} v_{\phi}\|_{(0,\alpha),\mu-2,r_{\varepsilon}}\right),$$
where $c>0$ is a constant that does not depend on $\varepsilon$, $R$ and $a$.

Using local coordinates we obtain that
$$ \sigma^{2-\mu}\|(\Delta-\Delta_g)(u_{\varepsilon,R,a}- u_{\varepsilon,R})\|_{(0,\alpha),[\sigma,2\sigma]} \leq c\sigma^{1+d-\mu}\|u_{\varepsilon,R,a}- u_{\varepsilon,R}\|_{(2,\alpha),[\sigma,2\sigma]}\leq c|a|\sigma^{3+d-\mu-\frac{n}{2}},$$
since $u_{\varepsilon,R,a}=u_{\varepsilon,R}+O''(|a||x|^{\frac{4-n}{2}})$, by (\ref{eq37}). The condition $\mu<3/2$ implies
\begin{equation}\label{eq106}
\|(\Delta-\Delta_g)( u_{\varepsilon,R,a}-u_{\varepsilon,R})\|_{(0,\alpha),\mu-2,r_\varepsilon} \leq c|a|r_\varepsilon^{3+d-\mu-\frac{n}{2}}.
\end{equation}

As in the proof of Lemma \ref{lem05} we have that $(\Delta-\Delta_g)u_{\varepsilon,R}=O(|x|^N)$, and from this we obtain
\begin{equation}\label{eq107}
\|(\Delta-\Delta_g)u_{\varepsilon,R}\|_{(0,\alpha),\mu-2,r_\varepsilon}\leq cr_\varepsilon^{N'},
\end{equation}
where $N'$ is as big as we want. Hence, from (\ref{eq106}) and (\ref{eq107}), we get
%
%
\begin{equation}\label{eq04}
\|(\Delta-\Delta_g)u_{\varepsilon,R,a}\|_{(0,\alpha),\mu-2,r_\varepsilon} \leq cr_\varepsilon^{\delta_2}r_\varepsilon^{2+d-\mu-\frac{n}{2}},
\end{equation}
since $|a|r_\varepsilon^{1-\delta_2}\leq 1$, with $\delta_2>0$.

From Lemma \ref{lem02} and (\ref{eq73}), we conclude that
\begin{equation*}\label{eq79}
\|(\Delta-\Delta_g)v_{\phi}\|_{(0,\alpha)\mu-2,r_\varepsilon} \leq cr_\varepsilon^{1+d-\mu} \|\phi\|_{(2,\alpha),r_\varepsilon} \leq c\kappa r_\varepsilon^{3+2d-\mu-\frac{n}{2}-\delta_1}
\end{equation*}
and then
%
%
\begin{equation}\label{eq06}
\|(\Delta-\Delta_g)v_{\phi}\|_{(0,\alpha)\mu-2,r_\varepsilon}\leq c\kappa r_\varepsilon^{1+d-\delta_1} r_\varepsilon^{2+d-\mu-\frac{n}{2}}.
\end{equation}

Furthermore, since $5-\mu-n/2\geq 3+d-\mu-n/2$, $R_g=O(|x|^{2})$ and we have (\ref{eq105}), we get that
%
%
\begin{equation}\label{eq11}
\|R_gu_{\varepsilon,R,a}\|_{(0,\alpha),\mu-2,r_\varepsilon}\leq cr_\varepsilon^{5-\mu-\frac{n}{2}}\leq cr_\varepsilon r_\varepsilon^{2+d-\mu-\frac{n}{2}}.
\end{equation}
Using (\ref{eq73}), we also get
%
%
\begin{equation}\label{eq12}
\|R_gv_{\phi}\|_{(0,\alpha),\mu-2,r_\varepsilon}\leq cr_\varepsilon^{4-\mu}\|\phi\|_{(2,\alpha),r_\varepsilon} \leq c\kappa r_\varepsilon^{4-\delta_1} r_\varepsilon^{2+d-\mu-\frac{n}{2}},
\end{equation}
with $4-\delta_1>0$.

By Lemma \ref{lem01} and (\ref{eq73}), we obtain
%
%
\begin{equation}\label{eq14}
\|Q_{\varepsilon,R,a}(v_{\phi})\|_{(0,\alpha),\mu-2,r_\varepsilon}\leq c\varepsilon^{\lambda_n}r_\varepsilon^{1+d-2\mu}\|\phi\| ^2_{(2,\alpha),r_\varepsilon}\leq c\kappa^2\varepsilon^{\delta'}
r_\varepsilon^{2+d-\mu-\frac{n}{2}},
\end{equation}
with $\delta'=\lambda_n+s(3+2d-\mu-n/2-2\delta_1)> 0$, since  $\mu<5/4$, $s>(d+1-\delta_1)^{-1}$ and $0<\delta_1<(8n-16)^{-1}$.

Let us estimate the norm $\|u_{\varepsilon,R,a}^{\frac{4}{n-2}}v_\phi \|_{(0,\alpha),\mu-2,r_\varepsilon}$.

First, (\ref{eq76}) implies $u_{\varepsilon,R,a}^{\frac{4}{n-2}}= u_{\varepsilon,R}^{\frac{4}{n-2}}+O(|a||x|^{-1})$. Hence, using (\ref{eq73}), we deduce that
\begin{equation}\label{eq77}
\sigma^{2-\mu}\|(u_{\varepsilon,R,a}^{\frac{4}{n-2}}- u_{\varepsilon,R}^{\frac{4}{n-2}} )v_\phi \|_{(0,\alpha),[\sigma,2\sigma]} \leq C|a|r_\varepsilon^{1-\mu}\|\phi\|_{(2,\alpha),r_\varepsilon}\leq C\kappa r_\varepsilon^{\delta_2-\delta_1} r_\varepsilon^{2+d-\mu-\frac{n}{2}},
\end{equation}
since $|a|r_\varepsilon^{1-\delta_2}\leq 1$, with $\delta_2-\delta_1>0$.

If 
$r_\varepsilon^{1+\lambda}\leq |x|\leq r_\varepsilon$ with $\lambda>0$, then $$-s\log\varepsilon\leq -\log|x|\leq -s(1+\lambda)\log\varepsilon,$$
and by the choice of $R$, $R^{\frac{2-n}{2}}=2(1+b)\varepsilon^{-1}$ with $|b|<1/2$, see Remark \ref{remark03},we  obtain
$$\hspace{-3,5cm}\left(\frac{2}{n-2}-s\right)\log\varepsilon +\log(2+2b)^{\frac{2}{2-n}} \leq -\log|x|+\log R\leq$$
$$\hspace{4,6cm}\leq \left(\frac{2}{n-2}-s(1+\lambda)\right)\log\varepsilon +\log (2+2b)^{\frac{2}{2-n}},$$
with $\frac{2}{n-2}-s>0$, since $s<4(d-2+3n/2)^{-1}< 2(n-2)^{-1}$. We also have
$$v_{\varepsilon}(-\log|x|+\log R)\leq \varepsilon e^{\left(\frac{n-2}{2} s-1 \right)\log \varepsilon+\log(2+2b)}= (2+2b)\varepsilon^{\frac{n-2}{2} s}$$
for small enough $\lambda>0$. This follows from the estimate $v_\varepsilon(t)\leq \varepsilon e^{\frac{n-2}{2}|t|}$, $\forall t\in \mathbb{R}$. Hence
\begin{equation}\label{eq32}
u_{\varepsilon,R}^{\frac{4}{n-2}}(x)= |x|^{-2}v_{\varepsilon}^{\frac{4}{n-2}}(-\log|x|+\log R)\leq C_n|x|^{-2}r_\varepsilon^2.
\end{equation}

If we take $0<\lambda<\frac{2}{s(n-2)}-1$ fixed, then $\frac{2}{n-2}-s(1+\lambda)>0$ and from (\ref{eq32}) we get
$$\|u_{\varepsilon,R}^{\frac{4}{n-2}} \|_{(0,\alpha),[\sigma,2\sigma]}\leq C\sigma^{-2}r_\varepsilon^2,$$
for $r_\varepsilon^{1+\lambda}\leq \sigma\leq 2^{-1}r_\varepsilon$,
and then
\begin{equation}\label{eq95}
\sigma^{2-\mu}\|u_{\varepsilon,R}^{\frac{4}{n-2}} v_\phi\|_{(0,\alpha),[\sigma,2\sigma]}\leq C\kappa r_\varepsilon^{2-\delta_1} r_\varepsilon^{2+d-\mu-\frac{n}{2}},
\end{equation}
with $2-\delta_1>0$.

For $0\leq \sigma\leq r_\varepsilon^{1+\lambda}$, we have
\begin{equation}\label{eq96}
\sigma^{2-\mu}\|u_{\varepsilon,R}^{\frac{4}{n-2}} v_\phi\|_{(0,\alpha),[\sigma,2\sigma]} \leq Cr_\varepsilon^{(2-\mu)(1+\lambda)-2} \|\phi\|_{(2,\alpha),r_\varepsilon}\leq C\kappa r_\varepsilon^{(2-\mu)\lambda-\delta_1} r_\varepsilon^{2+d-\mu-\frac{n}{2}},
\end{equation}
Since $s<4(d-2+3n/2)^{-1}$, we can take $\lambda$ such that $\frac{1}{4n-8}<\lambda<\frac{2}{s(n-2)}-1$. This together with $\mu<5/4$ and $0<\delta_1<(8n-16)^{-1}$ implies $(2-\mu)\lambda-\delta_1>0$.

Therefore, by (\ref{eq77}), (\ref{eq95}) and (\ref{eq96}) we obtain
%
%
\begin{equation}\label{eq54}
\|u_{\varepsilon,R,a}^{\frac{4}{n-2}}v_\phi\|_{(0,\alpha),\mu-2,r_\varepsilon} \leq cr_\varepsilon^{\delta''-\mu}\|\phi\|\leq c\kappa r_\varepsilon^{\delta''-\delta_1}
r_{\varepsilon}^{2+d-\mu-\frac{n}{2}},
\end{equation}
for some $\delta''>\delta_1$ fixed independent of $\varepsilon$.

Therefore, from (\ref{eq04}), (\ref{eq06}), (\ref{eq11}), (\ref{eq12}), (\ref{eq14}) and (\ref{eq54}) it follows (\ref{eq112}) for small enough $\varepsilon>0$.

For the same reason as before,
$$ \|\mathcal{N}_\varepsilon(R,a,\phi,v_1)- \mathcal{N}_\varepsilon(R,a,\phi,v_2) \|_{(2,\alpha),\mu,r_\varepsilon}\leq c\left(\|(\Delta_g-\Delta)(v_1-v_2) \|_{(0,\alpha),\mu-2,r_\varepsilon}\right.$$
$$\left.+\|R_g(v_1-v_2)\|_{(0,\alpha),\mu-2,r_\varepsilon}
+ \|Q_{\varepsilon,R,a}( v_{\phi}+v_1)-Q_{\varepsilon,R,a}(v_{\phi}+v_2) \|_{(0,\alpha),\mu-2,r_\varepsilon}\right),$$
where $c>0$ is a constant independent of $\varepsilon$, $R$ and $a$.

From Lemma \ref{lem02} and $R_g=O(|x|^2)$ we obtain
%
%
\begin{equation}\label{eq15}
\|(\Delta-\Delta_g)(v_1-v_2)\|_{(0,\alpha),\mu-2,r_\varepsilon}\leq cr_\varepsilon^{d+1}\|v_1-v_2\|_{(2,\alpha),\mu,r_\varepsilon}
\end{equation}
and
%
%
\begin{equation}\label{eq16}
\|R_g(v_1-v_2)\|_{(0,\alpha),\mu-2,r_\varepsilon}\leq cr_\varepsilon^4\|v_1-v_2\|_{(2,\alpha),\mu,r_\varepsilon}.
\end{equation}

As before, Lemma \ref{lem01} and (\ref{eq73}) imply
%
%
\begin{equation}\label{eq17}
\|Q_{\varepsilon,R,a}(v_{\phi}+v_1) -Q_{\varepsilon,R,a}(v_{\phi} +v_2)\|_{(0,\alpha),\mu-2,r_\varepsilon}\leq
c_\kappa\varepsilon^{\lambda_n+s(3+2d-\mu-\frac{n}{2}-\delta_1)} \|v_1-v_2\|_{(2,\alpha),\mu,r_\varepsilon}
\end{equation}
with $\lambda_n+s(3+2d-\mu-n/2-\delta_1)>0$ as in (\ref{eq14}).

Therefore, from (\ref{eq15}), (\ref{eq16}) and (\ref{eq17}), we deduce (\ref{eq113}) provided $v_1$, $v_2$ belong to the ball of radius $\tau r_\varepsilon^{2+d-\mu-\frac{n}{2}}$ in $C_\mu^{2,\alpha}(B_{r_\varepsilon}(0)\backslash\{0\}))$ for $\varepsilon>0$ chosen small enough.

Consider $n\geq 8$. 

Similarly
$$ \hspace{-2cm}\|\mathcal{N}_\varepsilon(R,a,\phi,0)\|_{(2,\alpha),\mu,r_\varepsilon} \leq c\left(\|(\Delta-\Delta_g)(u_{\varepsilon,R,a}+v_\phi+w_{\varepsilon,R}) \|_{(0,\alpha),\mu-2,r_\varepsilon}\right.$$
$$\hspace{-2cm}+ \|R_g(v_\phi+w_{\varepsilon,R})\|_{(0,\alpha),\mu-2,r_\varepsilon} + \|Q_{\varepsilon,R,a}(v_\phi+w_{\varepsilon,R}) \|_{(0,\alpha),\mu-2,r_\varepsilon}$$
$$+ \|(R_g-\partial_i\partial_jh_{ij})u_{\varepsilon,R,a} \|_{(0,\alpha),\mu-2,r_\varepsilon}+ \|\partial_i\partial_jh_{ij} (u_{\varepsilon,R,a}-u_{\varepsilon,R})\|_{(0,\alpha),\mu-2,r_\varepsilon}$$
$$\hspace{2cm}+ \|u_{\varepsilon,R,a}^{\frac{4}{n-2}} v_\phi\|_{(0,\alpha),\mu-2,r_\varepsilon}+ \|(u_{\varepsilon,R}^{\frac{4}{n-2}}- u_{\varepsilon,R,a}^{\frac{4}{n-2}})w_{\varepsilon,R} \|_{(0,\alpha),\mu-2,r_\varepsilon}+ \left.\|\displaystyle \overline{h}u_{\varepsilon,R} \|_{(0,\alpha),\mu-2,r_\varepsilon}\right),$$
where $c>0$ does not depend on $\varepsilon$, $R$ and $a$.

From (\ref{eq73}), (\ref{eq57}), Lemma \ref{lem01} and the fact that $R_g=O(|x|^{d-1})$, we get
%
%
\begin{equation}\label{eq21}
\|(\Delta-\Delta_g)w_{\varepsilon,R}\|_{(0,\alpha),\mu-2,r_\varepsilon} \leq cr_\varepsilon^{d+1} r_\varepsilon^{2+d-\mu-\frac{n}{2}},
\end{equation}
%
%
\begin{equation}\label{eq22}
\|R_g(v_\phi+w_{\varepsilon,R})\|_{(0,\alpha),\mu-2,r_\varepsilon}\leq c\kappa r_\varepsilon^{1+d-\delta_1} r_\varepsilon^{2+d-\mu-\frac{n}{2}},
\end{equation}
and
%
%
\begin{equation}\label{eq13}
\|Q_{\varepsilon,R,a}(v_\phi+w_{\varepsilon,R}) \|_{(0,\alpha),\mu-2,r_\varepsilon}\leq cr_\varepsilon^{\delta'} r_\varepsilon^{2+d-\mu-\frac{n}{2}},
\end{equation}
for some $\delta'>0$.

Note that 
$$(R_g-\partial_i\partial_jh_{ij}) u_{\varepsilon,R,a}=O(|x|^{\frac{n}{2}-2})$$ 
and 
$$\partial_i\partial_jh_{ij}(u_{\varepsilon,R,a}-u_{\varepsilon,R}) = O(|a||x|^{1+d-\frac{n}{2}}),$$
by Corollary \ref{cor02}. This implies
%
%
\begin{equation}\label{eq24}
\|(R_g-\partial_i\partial_jh_{ij}) u_{\varepsilon,R,a}\|_{(0,\alpha),\mu-2,r_\varepsilon}\leq cr_\varepsilon r_\varepsilon^{2+d-\mu-\frac{n}{2}}
\end{equation}
and
%
%
\begin{equation}\label{eq26}
\|\partial_i\partial_jh_{ij}(u_{\varepsilon,R,a}- u_{\varepsilon,R})\|_{(0,\alpha),\mu-2,r_\varepsilon}\leq c|a|r_\varepsilon r_\varepsilon^{2+d-\mu-\frac{n}{2}}.
\end{equation}

Finally, by the proof of Corollary \ref{cor01} we have  $u_{\varepsilon,R,a}^{\frac{4}{n-2}}- u_{\varepsilon,R}^{\frac{4}{n-2}}=O(|a||x|^{-1}).$
Hence,
%
%
\begin{equation}\label{eq58}
\|(u_{\varepsilon,R}^{\frac{4}{n-2}}- u_{\varepsilon,R,a}^{\frac{4}{n-2}})w_{\varepsilon,R} \|_{(0,\alpha),\mu-2,r_\varepsilon}\leq c|a|r_\varepsilon r_\varepsilon^{2+d-\mu-\frac{n}{2}}.
\end{equation}

Since $\overline{h}=O(|x|^{N'})$, where $N'$ is as big as we want, by (\ref{eq04}), (\ref{eq06}), (\ref{eq54}), (\ref{eq21}), (\ref{eq22}), (\ref{eq13}), (\ref{eq24}), (\ref{eq26}) and (\ref{eq58}), we deduce (\ref{eq113}) for $\varepsilon>0$ small enough.

Now, we have
$$\|\mathcal{N}_\varepsilon(R,a,\phi,v_1)- \mathcal{N}_\varepsilon(R,a,\phi,v_2) \|_{(2,\alpha),\mu,r_\varepsilon}\leq c\left(\|(\Delta_g-\Delta)(v_1-v_2) \|_{(0,\alpha),\mu-2,r_\varepsilon}\right.$$
$$+ \|Q_{\varepsilon,R,a}( v_\phi+w_{\varepsilon,R,a}+v_1)- Q_{\varepsilon,R,a}(v_\phi+w_{\varepsilon,R,a}+v_2) \|_{(0,\alpha),\mu-2},r_\varepsilon$$
$$\left.+\|R_g(v_1-v_2)\|_{(0,\alpha),\mu-2,r_\varepsilon}\right).$$

As before we obtain
%
%
\begin{equation}\label{eq33}
\|(\Delta-\Delta_g)(v_1-v_2)\|_{(0,\alpha),\mu-2,r_\varepsilon}\leq cr_\varepsilon^{d+1}\|v_1-v_2\|_{(2,\alpha),\mu,r_\varepsilon}
\end{equation}
and
%
%
\begin{equation}\label{eq34}
\|R_g(v_1-v_2)\|_{(0,\alpha),\mu-2,r_\varepsilon}\leq cr_\varepsilon^{d+1}\|v_1-v_2\|_{(0,\alpha),\mu,r_\varepsilon}.
\end{equation}

By Lemma \ref{lem01} and (\ref{eq73}), we obtain
\begin{equation}\label{eq81}
\begin{array}{c}
\|Q_{\varepsilon,R,a}( v_\phi+w_{\varepsilon,R,a}+v_1)- Q_{\varepsilon,R,a}(v_\phi+w_{\varepsilon,R,a}+v_2) \|_{(0,\alpha),\mu-2,r_\varepsilon}\leq\\
\leq c_\kappa \varepsilon^{\lambda_n+s(3+2d-\mu-\frac{n}{2}-\delta_1)} \|v_1-v_2\|_{(2,\alpha),\mu,r_\varepsilon},
\end{array}
\end{equation}
with $\lambda_n+s(3+2d-\mu-n/2-\delta_1)>0$.

Therefore, from (\ref{eq33}), (\ref{eq34}) and (\ref{eq81}), we deduce (\ref{eq113}) provided $v_1$, $v_2$ belong to the ball of radius $\tau r_\varepsilon^{2+d-\mu-\frac{n}{2}}$ in $C_\mu^{2,\alpha}(B_{r_\varepsilon}(0)\backslash\{0\}))$ for $\varepsilon>0$ chosen small enough.
\end{proof}

We summarize the main result of this section in the next theorem.
%
%
\begin{theorem}\label{teo01}
Let $\mu\in (1,5/4)$, $\tau>0$, $\kappa>0$ and $\delta_2>\delta_1$ be fixed constants. There exists a constant $\varepsilon_0\in(0,1)$ such that for each $\varepsilon\in(0,\varepsilon_0]$, $|b|\leq 1/2$, $a\in\mathbb{R}^n$ with $|a|r_\varepsilon^{1-\delta_2}\leq 1$ and $\phi\in\pi''(C^{2,\alpha}(\mathbb{S}_{r_\varepsilon}^{n-1}))$ with $\|\phi\|_{(2,\alpha),r_\varepsilon}\leq \kappa r_\varepsilon^{2+d-\frac{n}{2}-\delta_1}$, there exists a solution $U_{\varepsilon,R,a,\phi}\in C^{2,\alpha}_{\mu}(B_{r_\varepsilon}(0)\backslash\{0\})$ for the equation
$$\left\{\begin{array}{lcl}
H_g(u_{\varepsilon,R,a}+w_{\varepsilon,R}+v_{\phi}+ U_{\varepsilon,R,a,\phi})=0 & \mbox{ in } & B_{r_\varepsilon}(0)\backslash\{0\}\\
\pi''_{r_\varepsilon}((v_{\phi}+U_{\varepsilon,R,a,\phi})|_{\partial B_{r_\varepsilon}(0)})=\phi  & \mbox{ on } & \partial B_{r_\varepsilon}(0)
\end{array}\right.$$
where $w_{\varepsilon,R}\equiv 0$ for $3\leq n\leq 7$ and $w_{\varepsilon,R}\in \pi''(C_{2+d-\frac{n}{2}}^{2,\alpha}(B_{r_\varepsilon}(0)\backslash\{0\}))$ is solution of the equation (\ref{eq35}) for $n\geq 8$.

Moreover,
\begin{equation}\label{eq71}
\|U_{\varepsilon,R,a,\phi}\|_{(2,\alpha),\mu,r_\varepsilon}\leq \tau r_\varepsilon^{2+d-\mu-\frac{n}{2}}
\end{equation}
and
\begin{equation}\label{eq82}
\|U_{\varepsilon,R,a,\phi_1}- U_{\varepsilon,R,a,\phi_2}\|_{(2,\alpha),\mu,r_\varepsilon}\leq Cr_\varepsilon^{\delta_3-\mu}\|\phi_1-\phi_2\|_{(2,\alpha),r_\varepsilon},
\end{equation}
for some constants $\delta_3>0$  that does not depend on $\varepsilon$, $R$, $a$ and $\phi_i$, $i=1,2$.
\end{theorem}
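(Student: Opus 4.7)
The existence of the fixed point $U_{\varepsilon,R,a,\phi}$ and the size bound (\ref{eq71}) follow immediately from Proposition \ref{propo03}: the estimates (\ref{eq112}) and (\ref{eq113}) established there show that $\mathcal{N}_\varepsilon(R,a,\phi,\cdot)$ is a contraction on the closed ball of radius $\tau r_\varepsilon^{2+d-\mu-n/2}$ in $C_\mu^{2,\alpha}(B_{r_\varepsilon}(0)\setminus\{0\})$, so the Banach fixed point theorem produces $U_{\varepsilon,R,a,\phi}$. By construction, $U_{\varepsilon,R,a,\phi} = \mathcal{N}_\varepsilon(R,a,\phi,U_{\varepsilon,R,a,\phi})$ lies in the image of $G_{\varepsilon,R,r_\varepsilon,a}$, so applying $L_{\varepsilon,R,a}$ to both sides and unwinding the definition of $\mathcal{N}_\varepsilon$ in (\ref{eq36}) or (\ref{eq59}) recovers equation (\ref{eq10}) or (\ref{eq07}), which in turn is equivalent to $H_g(u_{\varepsilon,R,a}+w_{\varepsilon,R}+v_\phi+U_{\varepsilon,R,a,\phi})=0$.

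The boundary condition $\pi''_{r_\varepsilon}((v_\phi+U_{\varepsilon,R,a,\phi})|_{\partial B_{r_\varepsilon}(0)})=\phi$ is built in: by Proposition \ref{propo02} and the scaling (\ref{eq62}), the Poisson extension satisfies $\pi''_{r_\varepsilon}(v_\phi|_{\mathbb{S}_{r_\varepsilon}^{n-1}})=\phi$, while Corollary \ref{cor01} guarantees that every function in the range of $G_{\varepsilon,R,r_\varepsilon,a}$, in particular $U_{\varepsilon,R,a,\phi}$, has vanishing high-eigenmode trace on $\partial B_{r_\varepsilon}(0)$. Adding the two contributions yields the stated boundary condition.

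For the Lipschitz estimate (\ref{eq82}), set $U_i:=U_{\varepsilon,R,a,\phi_i}$ and write
\[
U_1-U_2 = \bigl[\mathcal{N}_\varepsilon(R,a,\phi_1,U_1)-\mathcal{N}_\varepsilon(R,a,\phi_1,U_2)\bigr] + \bigl[\mathcal{N}_\varepsilon(R,a,\phi_1,U_2)-\mathcal{N}_\varepsilon(R,a,\phi_2,U_2)\bigr].
\]
The first bracket has norm at most $\tfrac12\|U_1-U_2\|_{(2,\alpha),\mu,r_\varepsilon}$ by (\ref{eq113}), so it may be absorbed to the left, leaving the task of estimating the second bracket by $Cr_\varepsilon^{\delta_3-\mu}\|\phi_1-\phi_2\|_{(2,\alpha),r_\varepsilon}$. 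Since $v_\phi=\mathcal{P}_{r_\varepsilon}(\phi)$ depends linearly on $\phi$, every term in the definition of $\mathcal{N}_\varepsilon$ that involves $\phi$ is either linear in $v_{\phi_1}-v_{\phi_2}=\mathcal{P}_{r_\varepsilon}(\phi_1-\phi_2)$, namely $(\Delta-\Delta_g)v_\phi$, $R_g v_\phi$, and $u_{\varepsilon,R,a}^{4/(n-2)}v_\phi$, or enters through the difference $Q_{\varepsilon,R,a}(v_{\phi_1}+w_{\varepsilon,R}+U_2)-Q_{\varepsilon,R,a}(v_{\phi_2}+w_{\varepsilon,R}+U_2)$, which Lemma \ref{lem01} controls in the same way as in Proposition \ref{propo03}. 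Repeating the computations (\ref{eq06}), (\ref{eq12}), (\ref{eq14}) and (\ref{eq54}) but without inserting the a priori bound $\|\phi\|\le\kappa r_\varepsilon^{2+d-n/2-\delta_1}$, and using the bounded norm of $G_{\varepsilon,R,r_\varepsilon,a}$ from Corollary \ref{cor01}, each term yields a bound of the form $Cr_\varepsilon^{\beta-\mu}\|\phi_1-\phi_2\|_{(2,\alpha),r_\varepsilon}$ with a positive exponent $\beta$; taking $\delta_3$ to be the minimum of these exponents gives (\ref{eq82}).

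The main subtlety, and the one that drives the choice of $\delta_3$, is the term $u_{\varepsilon,R,a}^{4/(n-2)}v_\phi$, whose Hölder norm is not controlled by a clean power of $|x|$ due to the Delaunay concentration near the origin; its treatment requires the two-scale splitting of the ball $B_{r_\varepsilon}(0)$ at radius $r_\varepsilon^{1+\lambda}$ used for (\ref{eq95})--(\ref{eq96}), which works precisely because $s<4(d-2+3n/2)^{-1}$ and $\delta_1<(8n-16)^{-1}$ leave room for a positive leftover exponent. Once this term is accommodated, the other terms are strictly better, so a uniform $\delta_3>0$ (in fact $\delta_3>\delta_1$) exists and the proof concludes.
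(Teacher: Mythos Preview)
Your proposal is correct and follows essentially the same approach as the paper. The paper's own proof is extremely terse---it simply cites Proposition~\ref{propo03} for existence and~(\ref{eq71}), then states the inequality $\|U_{\varepsilon,R,a,\phi_1}-U_{\varepsilon,R,a,\phi_2}\|\le 2\|\mathcal{N}_\varepsilon(R,a,\phi_1,U_{\varepsilon,R,a,\phi_2})-\mathcal{N}_\varepsilon(R,a,\phi_2,U_{\varepsilon,R,a,\phi_2})\|$ (your contraction-plus-absorption step) and declares that~(\ref{eq82}) follows; you have correctly filled in the term-by-term analysis that the paper leaves implicit, including the identification of the $u_{\varepsilon,R,a}^{4/(n-2)}v_\phi$ term as the one governing~$\delta_3$.
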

%
%
\begin{proof}
The solution $U_{\varepsilon,R,a,\phi}$ is the fixed point of the map $\mathcal{N}_\varepsilon(R,a,\phi,\cdot)$ given by Proposition \ref{propo03} with the estimate (\ref{eq71}).

Use the fact that $U_{\varepsilon,R,a,\phi}$ is a fixed point of the map $\mathcal{N}_\varepsilon(R,a,\phi,\cdot)$ to show that
$$\|U_{\varepsilon,R,a,\phi_1}- U_{\varepsilon,R,a,\phi_2}\|_{(2,\alpha),\mu,r_\varepsilon}\leq 2\|\mathcal{N}_\varepsilon(R,a,\phi_1, U_{\varepsilon,R,a,\phi_2})- \mathcal{N}_\varepsilon(R,a,\phi_2, U_{\varepsilon,R,a,\phi_2})\|_{(2,\alpha),\mu,r_\varepsilon}.$$

From this we obtain the inequality (\ref{eq82}).
\end{proof}
We will write the full conformal factor of the resulting constant scalar curvature metric with respect to the metric $g$ as
$$\mathcal{A}_{\varepsilon}(R,a,\phi):=u_{\varepsilon,R,a}+w_{\varepsilon,R}+ v_\phi+U_{\varepsilon,R,a,\phi},$$
in conformal normal coordinates. More precisely, the previous analysis says that the metric $\hat{g}=\mathcal{A}_{\varepsilon}(R,a,\phi)^{\frac{4}{n-2}}g$ is defined in $\overline{B_{r_\varepsilon}(p)}\backslash\{p\}\subset M$, it is complete and has constant scalar curvature $R_{\hat{g}}=n(n-1)$. The completeness follows from the estimate
$$\mathcal{A}_{\varepsilon}(R,a,\phi)\geq c|x|^{\frac{2-n}{2}},$$
for some constant $c>0$.

%
%

\section{Exterior Analysis}\label{sec04}
%
%
In Section \ref{sec02} we have found a family of constant scalar curvature metrics on $\overline{B_{r_\varepsilon}(p)}\backslash\{p\}\subset M$, conformal to $g_0$ and with prescribed high eigenmode data. Now we will use the same method of the previous section to perturb the metric $g_0$ and build a family of constant scalar curvature metrics on the complement of some suitable ball centered at $p$ in $M$.

First, using the non-degeneracy we find a right inverse for the operator $L_{g_0}^1$ (see (\ref{eq29})), in the complement of the ball $B_r(p)\subset M$ for small enough $r$, with bounded norm independently of $r$.

In contrast with the previous section, in which we worked with  conformal normal coordinates, in this section it is better to work with the constant scalar curvature metric, since in this case the constant function $1$ satisfies $H_{g_0}(1)=0$. Hence, in this section, $(M^n,{g_0})$ is an $n-$dimensional nondegenerate compact Riemannian manifold of constant scalar curvature $R_{{g_0}}=n(n-1)$. 

%
%
\subsection{Analysis in $M\backslash B_r(p)$}
Let $r_1\in(0,1)$ and $\Psi:B_{r_1}(0)\rightarrow M$ be a normal coordinate system with respect to $g=\mathcal{F}^{\frac{4}{n-2}}g_0$ on $M$ centered at $p$, where $\mathcal{F}$ is defined in Section \ref{sec02}. We denote by $G_p(x)$ the Green's function for $L_{g_0}^1=\Delta_{g_0}+n$, the linearization of $H_{g_0}$ about the constant function 1, with pole at $p$ (the origin in our coordinate system). We assume that $G_p(x)$ is normalized such that in the coordinates $\Psi$ we have $\displaystyle\lim_{x\rightarrow 0}|x|^{n-2}G_p(x)=1$. This implies that $|G_p\circ\Psi(x)|\leq C|x|^{2-n}$, for all $x\in B_{r_1}(0)$. In these coordinates we have that $(g_0)_{ij}=\delta_{ij}+O(|x|^2)$, since $g_{ij}=\delta_{ij}+O(|x|^2)$ and $\mathcal{F}=1+O(|x|^2)$.

Our goal in this section is to solve the equation
%
%
\begin{equation}\label{eq25}
H_{{g_0}}(1+\lambda G_{p}+u)=0\;\;\;\mbox{ on }\;\;\;M\backslash B_r(p)
\end{equation}
with $\lambda\in\mathbb{R}$, $r\in (0,r_1)$ and prescribed boundary data on $\partial B_r(p)$. In fact, we will get a solution with prescribed boundary data, except in the space spanned by the constant functions.

To solve this equation we will use basically the same techniques that were used in Proposition \ref{propo03}. We linearize $H_{{g_0}}$ about 1 to get
$$H_{{g_0}}(1+\lambda G_{p}+u)=L_{{g_0}}^1(u)+Q^1(\lambda G_{p}+u),$$
since $H_{{g_0}}(1)=0$ and $L_{{g_0}}^1(G_{p})=0$, where $Q^1$ is given by (\ref{eq44}). Next, we will find a right inverse for $L_{g_0}^1$ in a suitable space and so we will reduce the equation (\ref{eq25}) to the problem of fixed point as in the previous section.
%
%
\subsection{Inverse for $L_{{g_0}}^1$ in $M\backslash\Psi(B_r(0))$}\label{sec15}
To find a right inverse for $L_{{g_0}}^1$, we will follow the method of Jleli in \cite{M}. This problem is approached by decomposing $f$ as the sum of two functions, one of them with support contained in an annulus inside $\Psi(B_{r_1}(0))$. Inside the annulus we transfer the problem to normal coordinates and solve. For the remainder term we use the right invertibility of $L_{{g_0}}^1$ on $M$ which is a consequence of the non-degeneracy.

The next two lemmas allow us to use a perturbation argument in the annulus contained in $\Psi(B_{r_1}(0))$.
%
%
\begin{lemma}\label{lem06}
Fix any $\nu\in\mathbb{R}$. There exists $C>0$ independent of $r$ and $s$  such that if $0<2r<s\leq r_1$, then
$$\|(L_{{g_0}}^1-\Delta) (v)\|_{C^{0,\alpha}_{\nu-2}(\Omega_{r,s})}\leq Cs^2\|v\|_{C^{2,\alpha}_{\nu}(\Omega_{r,s})},$$
for all $v\in C_\nu^{2,\alpha}(\Omega_{r,s})$.
\end{lemma}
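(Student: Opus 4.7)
The plan is to split the operator as $L_{g_0}^1-\Delta = (\Delta_{g_0}-\Delta) + n\cdot\mathrm{Id}$ and estimate both pieces on each dyadic annulus in $\Omega_{r,s}$, extracting the factor $s^2$ from the weight gymnastics combined with the second order vanishing of $g_0-\delta$ at $p$ in these coordinates.

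First I would expand $\Delta_{g_0}$ in the coordinates $\Psi$: writing $\Delta_{g_0} = (g_0)^{ij}\partial_i\partial_j - (g_0)^{ij}\Gamma_{ij}^k\partial_k$ and using the stated expansion $(g_0)_{ij} = \delta_{ij} + O(|x|^2)$, I obtain coefficients
\[
a^{ij}(x) := (g_0)^{ij}(x) - \delta^{ij} = O(|x|^2), \qquad b^k(x) := -(g_0)^{ij}(x)\Gamma_{ij}^k(x) = O(|x|),
\]
the second estimate because the first derivatives of $g_0$ vanish at the origin in these normal coordinates. Hence $(\Delta_{g_0}-\Delta)v(x) = a^{ij}(x)\,\partial_i\partial_j v(x) + b^k(x)\,\partial_k v(x)$. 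Since $a^{ij}$ and $b^k$ are smooth with the indicated vanishing at $0$, a direct computation gives
\[
\|a^{ij}\|_{(0,\alpha),[\sigma,2\sigma]} \le C\sigma^2, \qquad \|b^k\|_{(0,\alpha),[\sigma,2\sigma]} \le C\sigma,
\]
with $C$ independent of $\sigma$.

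Next, on a fixed dyadic annulus $|x|\in[\sigma,2\sigma]$ with $r\le\sigma\le s/2$, the standard consequence of the weighted norm definition gives $\|\nabla^j v\|_{(0,\alpha),[\sigma,2\sigma]} \le C\sigma^{-j}\|v\|_{(2,\alpha),[\sigma,2\sigma]}$ for $j=0,1,2$. Combining this with the product estimate for the local Hölder norms, I obtain
\[
\|(\Delta_{g_0}-\Delta)v\|_{(0,\alpha),[\sigma,2\sigma]} \le C\,\|v\|_{(2,\alpha),[\sigma,2\sigma]},
\]
with $C$ independent of $\sigma$, $r$ and $s$. The zero-order part is trivial: $\|nv\|_{(0,\alpha),[\sigma,2\sigma]} \le n\|v\|_{(2,\alpha),[\sigma,2\sigma]}$.

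Finally, passing to the weighted norm on $\Omega_{r,s}$, I multiply by $\sigma^{-(\nu-2)} = \sigma^{2}\sigma^{-\nu}$ and take the supremum over $\sigma\in[r,s/2]$, getting
\[
\|(L_{g_0}^1-\Delta)v\|_{C^{0,\alpha}_{\nu-2}(\Omega_{r,s})} \le C \sup_{r\le\sigma\le s/2} \sigma^{2}\cdot\sigma^{-\nu}\|v\|_{(2,\alpha),[\sigma,2\sigma]} \le C s^{2}\, \|v\|_{C^{2,\alpha}_{\nu}(\Omega_{r,s})}.
\]
The key point is that the $s^2$ factor comes solely from the constraint $\sigma\le s/2$ together with the two powers of $|x|$ gained from $(g_0)_{ij}-\delta_{ij} = O(|x|^2)$; no delicate cancellation is needed. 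There is no real obstacle here — the estimate is essentially a book-keeping exercise once the coordinate expansion of the coefficients is in hand.
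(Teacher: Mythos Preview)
Your argument is correct and is exactly the approach the paper intends: its proof consists of the single line ``Use the Laplacian in local coordinates,'' and you have carried out precisely that computation, expanding $\Delta_{g_0}$ via $(g_0)_{ij}=\delta_{ij}+O(|x|^2)$ and tracking the weights on dyadic annuli. One small comment: your closing sentence slightly overstates the role of the $O(|x|^2)$ vanishing---for the zero-order piece $nv$ the factor $s^2$ comes purely from the weight shift $\nu-2\to\nu$---but your estimates themselves are all correct.
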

%
%
\begin{proof} Use the Laplacian in local coordinates.
\end{proof}
%
%
\begin{lemma}\label{lem3}
Assume that $\nu\in\left(1-n,2-n\right)$ is fixed and that $0<2r<s\leq r_1$. Then there exists an operator
$$\tilde{G}_{r,s}:C^{0,\alpha}_{\nu-2}(\Omega_{r,s})\rightarrow C^{2,\alpha}_{\nu}(\Omega_{r,s})$$
such that, for all $f\in C^{0,\alpha}_{\nu}(\Omega_{r,s})$, the function $w=\tilde{G}_{r,s}(f)$ is a solution of
%
%
\begin{eqnarray*}
\left\{\begin{array}{rclccl}
\Delta w & = & f & \mbox{ in } & B_s(0)\backslash B_r(0)\\
w & = & 0 & \mbox{ on } & \partial B_s(0)\\
w & \in & \mathbb{R} & \mbox{ on } & \partial B_r(0)
\end{array}\right..
\end{eqnarray*}
In addition,
$$\|\tilde{G}_{r,s}(f)\|_{C_\nu^{2,\alpha}(\Omega_{r,s})}\leq C\|f\|_{C_{\nu-2}^{0,\alpha}(\Omega_{r,s})},$$
for some constant $C>0$ that does not depend on $s$ and $r$.
\end{lemma}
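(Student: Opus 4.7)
The natural approach is separation of variables. Expand $f$ in spherical harmonics, $f(\rho\theta)=\sum_{j\ge 0}f_j(\rho)e_j(\theta)$, and seek $w$ of the same form. For each $j$ the reduced ODE
\begin{equation*}
w_j''+\tfrac{n-1}{\rho}w_j'-\tfrac{\lambda_j}{\rho^2}w_j = f_j,\qquad\rho\in[r,s],
\end{equation*}
has homogeneous solutions $\rho^{\gamma_j^\pm}$ with indicial roots $\gamma_j^\pm=\tfrac{1}{2}(2-n\pm\sqrt{(n-2)^2+4\lambda_j})$. For $j\ge 1$ one has $\gamma_j^+\ge 1$ and $\gamma_j^-\le 1-n$, so the hypothesis $\nu\in(1-n,2-n)$ places $\nu$ strictly between these roots. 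For $j=0$ the roots are $0$ and $2-n$, both lying above $\nu$, so this mode requires separate treatment.

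For $j\ge 1$ the plan is to impose the two-point Dirichlet conditions $w_j(r)=w_j(s)=0$, which encode the inner-boundary requirement that the higher eigenmodes vanish there. The solution is given by the Green's function built from $\rho^{\gamma_j^\pm}$; splitting the integral at $t=\rho$ and using $|f_j(t)|\le\|f_j\|_{\nu-2}t^{\nu-2}$, a direct computation yields
\begin{equation*}
|w_j(\rho)|\le\frac{\|f_j\|_{\nu-2}}{(\gamma_j^+-\nu)(\nu-\gamma_j^-)}\,\rho^\nu,
\end{equation*}
and the prefactor is uniformly bounded in $j$, $r$, $s$ (indeed it improves as $j\to\infty$).

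For $j=0$ the condition that $w$ be constant on $\partial B_r$ is automatic, since $w_0(\rho)e_0$ is radial, so only $w_0(s)=0$ must be imposed. The free parameter is pinned down by additionally requiring $w_0'(s)=0$, which gives the explicit formula
\begin{equation*}
w_0(\rho)=\int_\rho^s\tau^{1-n}\int_\tau^s t^{n-1}f_0(t)\,dt\,d\tau.
\end{equation*}
Using $|f_0(t)|\le\|f_0\|_{\nu-2}t^{\nu-2}$ together with $n+\nu-2\in(-1,0)$ (a consequence of $\nu\in(1-n,2-n)$), the inner integral is controlled by $C\tau^{n+\nu-2}\|f_0\|_{\nu-2}$, and integrating once more yields $|w_0(\rho)|\le C\rho^\nu\|f_0\|_{\nu-2}$ with $C$ depending only on $n,\nu$. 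The constant value $w_0(r)$ produced by this construction is what realises ``$w\in\mathbb R$ on $\partial B_r$''.

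Summing the eigenmodes formally defines $w=\tilde G_{r,s}(f)$, with the pointwise bound $|w(\rho\theta)|\le C\rho^\nu\|f\|_{C^0_{\nu-2}}$. To upgrade to the full $C^{2,\alpha}_\nu$ estimate, for each dyadic annulus $[\sigma,2\sigma]\subset[r,s/2]$ the rescaling $x\mapsto\sigma^{-1}x$ transplants the equation to the fixed annulus $[1,2]$ with uniformly bounded coefficients; interior Schauder theory then gives the $(2,\alpha)$-control from the pointwise bound and the weighted $C^{0,\alpha}$ norm of $f$ on the same piece, and taking the weighted supremum in $\sigma$ finishes the estimate. The main obstacle is securing uniformity in $r$ for the $j=0$ mode, and this is precisely where both endpoints of $(1-n,2-n)$ are used: $\nu>1-n$ is equivalent to the integrability $n+\nu-2>-1$ that keeps the inner integrals finite uniformly as $r\to 0$, while $\nu<2-n$ leaves room for the constant-on-$\partial B_r$ degree of freedom that accommodates the nontrivial value $w_0(r)$ inherent in the construction.
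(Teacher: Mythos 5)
The paper itself does not prove this lemma; it cites Lemma 13.23 of Jleli's thesis \cite{M} and \cite{JP}. Your reconstruction follows the same general strategy (eigenmode decomposition, Green's function for each radial ODE exploiting that $\nu$ avoids the indicial roots, rescaled Schauder to upgrade), and the mode-by-mode estimates you write down are correct: the indicial roots $\gamma_j^\pm$, the uniformity of the two-point Green's function constant in $r,s$ when $2r<s$, and the explicit $j=0$ formula and its $\rho^\nu$ bound all check out.

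The genuine gap is the sentence ``Summing the eigenmodes formally defines $w$, with the pointwise bound $|w(\rho\theta)|\le C\rho^\nu\|f\|_{C^0_{\nu-2}}$.'' That bound does \emph{not} follow by summing the mode-by-mode estimates. The constants in $|w_j(\rho)|\le\frac{C}{\lambda_j}\rho^\nu\|f_j\|_{\nu-2}$ only decay like $\lambda_j^{-1}$, the spherical harmonics $e_j$ grow in $L^\infty$, and $\sum_j\|f_j\|_{\nu-2}$ is not controlled by $\|f\|_{C^0_{\nu-2}}$; indeed even $\sum_j|w_j(\rho)|^2$ need not converge from these bounds for $n\ge 5$. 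One cannot invoke the maximum principle as a shortcut either: the barrier $A\rho^\nu$ has $\Delta(A\rho^\nu)=A\nu(\nu+n-2)\rho^{\nu-2}>0$ with the wrong sign, and the radial Dirichlet problem $w_0(r)=w_0(s)=0$ genuinely fails the $\rho^\nu$ bound as $r\to 0$, so the comparison argument cannot distinguish the modes. Closing this step requires a real argument — e.g.\ a mode-by-mode Hardy-type $L^2$-to-$L^2$ estimate with $j$-uniform constant (obtained by the substitution $t=\log\rho$, turning the reduced ODE into a constant-coefficient equation with characteristic roots $\gamma_j^\pm-\nu$ of opposite sign), followed by the dyadic Schauder bootstrap; or a compactness/blow-up argument; or appealing to the uniform high-eigenmode inverse of \cite{MP} for $j\ge n+1$ while treating the finitely many modes $j\le n$ by hand. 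A minor additional point: the constraint $\nu>1-n$ is not what makes the $j=0$ integrals finite (only $\nu<2-n$ and $\nu<0$ enter there); $\nu>1-n$ is needed for the modes $j=1,\dots,n$, whose lower indicial root is exactly $\gamma_j^-=1-n$, so that the two-point Green's function constant $\bigl((\gamma_j^+-\nu)(\nu-\gamma_j^-)\bigr)^{-1}$ remains finite.
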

\begin{proof} See lemma 13.23 in \cite{M} and \cite{JP}.
\end{proof}
%
%
\begin{proping}\label{propo04}
Fix $\nu\in(1-n,2-n)$. There exists $r_2< \frac{1}{4}r_1$ such that, for all $r\in(0,r_2)$ we can define an operator
$$G_{r,{{g_0}}}:C^{0,\alpha}_{\nu-2}(M_r)\rightarrow C^{2,\alpha}_{\nu}(M_r),$$
with the property that, for all $f\in C^{0,\alpha}_{\nu-2}(M_r)$ the function $w=G_{r,{{g_0}}}(f)$ solves
$$L_{g_0}^1(w)  =  f,$$
in $M_r$ with $w\in\mathbb{R}$ constant on $\partial B_r(p)$. In addition
$$\|G_{r,{{g_0}}}(f)\|_{C^{2,\alpha}_\nu(M_r)}\leq C\|f\|_{C^{0,\alpha}_{\nu-2}(M_r)},$$
where $C>0$ does not depend on $r.$
\end{proping}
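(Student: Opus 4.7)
The plan is a parametrix construction that splices the Laplace inverse $\tilde G_{r,s_0}$ of Lemma~\ref{lem3} with the global right inverse of $L_{g_0}^1$ on $M$ furnished by nondegeneracy, and then corrects the boundary values on $\partial B_r(p)$ by a harmonic extension via $\mathcal Q_r$. By Definition~\ref{def3}, nondegeneracy of $g_0$ about $1$ produces a bounded right inverse $\bar G: C^{0,\alpha}(M) \to C^{2,\alpha}(M)$ of $L_{g_0}^1 = \Delta_{g_0} + n$ on the closed manifold. Fix $s_0 \in (0, r_1/4)$ to be chosen small but independent of $r$, set $r_2 = s_0/4$, and pick a cutoff $\chi$ with $\chi \equiv 1$ on $\Psi(B_{s_0/2}(0))$ and supported in $\Psi(B_{s_0}(0))$.

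Given $r \in (0, r_2)$ and $f \in C^{0,\alpha}_{\nu-2}(M_r)$, set $v := \tilde G_{r, s_0}(f|_{\Omega_{r,s_0}})$ and let $V_1$ on $M_r$ be the extension of $(\chi v)\circ\Psi^{-1}$ by zero. Then $V_1 \in C^{2,\alpha}_\nu(M_r)$ is constant on $\partial B_r(p)$, with $\|V_1\|_{C^{2,\alpha}_\nu(M_r)} \leq C\|f\|_{C^{0,\alpha}_{\nu-2}(M_r)}$. Computing the commutator of $L_{g_0}^1$ with $\chi$ and using $\Delta v = f$ on $\Omega_{r,s_0}$ yields $L_{g_0}^1 V_1 = f - f_2$, where
\[
f_2 = (1-\chi)f - \chi(L_{g_0}^1 - \Delta)v - 2\langle\nabla\chi,\nabla v\rangle_{g_0} - v\,\Delta_{g_0}\chi.
\]
The two commutator terms and $(1-\chi)f$ are supported in the fixed annulus $\Psi(\Omega_{s_0/2,s_0})$, so they lie in $C^{0,\alpha}(M)$ with norm bounded by $C\|f\|_{C^{0,\alpha}_{\nu-2}(M_r)}$. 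By Lemma~\ref{lem06}, the middle term also lies in $C^{0,\alpha}(M)$ with norm at most $Cs_0^2\|f\|_{C^{0,\alpha}_{\nu-2}(M_r)}$. Setting $W_2 := \bar G(f_2)$, which lies in $C^{2,\alpha}(M)$ and embeds into $C^{2,\alpha}_\nu(M_r)$ uniformly in $r$ since $\nu<0$, we get $L_{g_0}^1(V_1+W_2) = f$, but $W_2|_{\partial B_r(p)}$ is in general not constant.

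To restore the boundary condition, let $\varphi_r$ denote the $L^2$-projection of $W_2|_{\partial B_r(p)}$ onto the orthogonal complement of the constants. Taylor expansion of $W_2 \in C^{2,\alpha}(M)$ at $p$ gives $\|\varphi_r\|_{(2,\alpha),r} \leq Cr\|W_2\|_{C^{2,\alpha}(M)} \leq Cr\|f\|_{C^{0,\alpha}_{\nu-2}(M_r)}$. Applying $\mathcal Q_r$ from Proposition~\ref{lem4} to $\varphi_r$, multiplying the result by a cutoff supported near $p$, and pulling back to $M_r$ yields a correction $V_3 \in C^{2,\alpha}_\nu(M_r)$ with $V_3|_{\partial B_r(p)} = \varphi_r$; by~\eqref{eq74} together with the bound on $\varphi_r$, both $\|V_3\|_{C^{2,\alpha}_\nu(M_r)}$ and $\|L_{g_0}^1 V_3\|_{C^{0,\alpha}_{\nu-2}(M_r)}$ are $O(r)\|f\|_{C^{0,\alpha}_{\nu-2}(M_r)}$. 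Then $U := V_1 + W_2 - V_3$ is constant on $\partial B_r(p)$ and solves $L_{g_0}^1 U = f - f_3$ with $\|f_3\|_{C^{0,\alpha}_{\nu-2}(M_r)} = O(r)\|f\|_{C^{0,\alpha}_{\nu-2}(M_r)}$. Iterating the whole construction on $f_3$ yields a convergent Neumann series defining $G_{r,g_0}(f)$; choosing $s_0$ small enough that $Cs_0^2 < 1/4$ and then $r_2$ small enough that the $O(r)$ boundary correction is also $<1/4$ makes the overall map a strict contraction, and the resulting bound on $G_{r,g_0}$ is uniform in $r$.

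The hard part will be uniformity of the operator norm in $r$. The norms of $\tilde G_{r,s_0}$ and $\bar G$ are uniform by Lemma~\ref{lem3} (the weight $\nu \in (1-n, 2-n)$ lies strictly between the first two indicial roots of $\Delta$ at the puncture) and by the nondegeneracy hypothesis respectively; the delicate point is that the boundary-correction step via $\mathcal Q_r$ must remain uniformly controlled, which relies crucially on the factor $r^{n-1}$ in~\eqref{eq74} being paired with the $O(r)$ Taylor-smallness of $\varphi_r$ so that the correction genuinely contracts in $r$ rather than blowing up.
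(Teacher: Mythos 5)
Your overall parametrix strategy is close in spirit to the paper's, but there is a concrete gap in the middle step. You claim that the term $\chi(L_{g_0}^1 - \Delta)v$ lies in $C^{0,\alpha}(M)$ with norm $\leq Cs_0^2\|f\|_{C^{0,\alpha}_{\nu-2}(M_r)}$, citing Lemma~\ref{lem06}, and then feed the whole of $f_2$ into the \emph{unweighted} global inverse $\bar G: C^{0,\alpha}(M)\to C^{2,\alpha}(M)$. That claim is false. Lemma~\ref{lem06} gives only a weighted bound in $C^{0,\alpha}_{\nu-2}(\Omega_{r,s_0})$; pointwise, $(L_{g_0}^1-\Delta)v = O(|x|^{\nu})$ with $\nu<2-n\leq -1$, so this term is unbounded near $\partial B_r(p)$ and its support reaches all the way to $|x|=r$ (the cutoff $\chi$ does not truncate it away from $p$). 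Any extension of $f_2$ to $M$ therefore has $C^{0,\alpha}(M)$ norm blowing up like $r^{\nu-2}$ as $r\to 0$, so $\bar G$ cannot be applied with a bound uniform in $r$, and the identity $L_{g_0}^1(V_1+W_2)=f$ on $M_r$ is not established.

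The paper avoids precisely this by an extra preliminary step: it uses Lemma~\ref{lem06} together with a Neumann-series perturbation of $\tilde G_{r,s}$ to produce an operator $G_{r,r_1}$ that is a right inverse for $L_{g_0}^1$ itself (not $\Delta$) on the annulus, with norm uniform in $r$. Setting $w_0 = \eta\,G_{r,r_1}(f|_{\Omega_{r,r_1}})$, the defect $h = f - L_{g_0}^1(w_0)$ then consists only of cutoff commutators and $(1-\eta)f$, hence is genuinely supported in $M_{r_1/2}$, genuinely in $C^{0,\alpha}(M)$, and the global inverse applies cleanly. The paper also handles the boundary condition differently: rather than correcting $W_2|_{\partial B_r(p)}$ by an exterior harmonic extension $\mathcal Q_r$ as you do, it simply multiplies $(L_{g_0}^1)^{-1}(h)$ by a second cutoff $\chi$ that \emph{vanishes} on $B_{r_2}(p)$, so $w_0+w_1$ is automatically constant on $\partial B_r(p)$; the resulting commutator defect, supported in $\Omega_{r_2,2r_2}$, has $C^{0,\alpha}_{\nu-2}$ norm $O(r_2^{-1-\nu})\to 0$ since $\nu<-1$. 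Your $\mathcal Q_r$-based boundary correction is itself sound (the $r^{n-1}$ gain from~\eqref{eq74} paired with the $O(r)$ Taylor smallness of the non-constant boundary trace does give a genuine contraction), but it cannot rescue the argument as long as $\chi(L_{g_0}^1-\Delta)v$ is misrouted into $\bar G$; you would instead need to leave that term in the iteration remainder $f_3$ (where its $C^{0,\alpha}_{\nu-2}$ norm \emph{is} $O(s_0^2)\|f\|$) rather than in $f_2$.
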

%
%
\begin{proof} From Lemma \ref{lem06} and a perturbation argument follows that the result of Lemma \ref{lem3} holds for $s=r_1$ small enough when $\Delta$ is replaced by $L_{{g_0}}^1$. We denote by $G_{r,r_1}$ the corresponding operator.

Let $f\in C^{0,\alpha}_{\nu-2}(M_r)$ and define a function $w_0\in C^{2,\alpha}_\nu(M_r)$ by
$$w_0:=\eta G_{r,r_1}(f|_{\Omega_{r,r_1}})$$
where $\eta$ is a smooth, radial function equal to 1 in $B_{\frac{1}{2}r_1}(p)$, vanishing in $M_{r_1}$ and satisfying $|\partial_r\eta(x)|\leq c|x|^{-1}$ and $|\partial_r^2\eta(x)|\leq c|x|^{-2}$ for all $x\in B_{r_1}(0)$. From this it follows that $\|\eta\|_{(2,\alpha),[\sigma,2\sigma]}$ is uniformly bounded in $\sigma$, for every $r\leq \sigma\leq \frac{1}{2}r_1$. Thus,
\begin{equation}\label{eq08}
\|w_0\|_{C^{2,\alpha}_\nu(M_r)}\leq C\| f\|_{{C^{0,\alpha}_{\nu-2}(M_r)}},
\end{equation}
where the constant $C>0$ is independent of $r$ and $r_1$.

Since $w_0=G_{r,r_1}(f|_{\Omega_{r,r_1}})$ in $\Omega_{r,\frac{1}{2}r_1}$, the function
$$h:=f-L_{{g_0}}^1(w_0)$$
is supported in $M_{\frac{1}{2}r_1}$. We can consider that $h$ is defined on the whole $M$ with $h\equiv 0$ in $B_{\frac{1}{2}r_1}(p)$. By (\ref{eq08}) we get
\begin{equation}\label{eq23}
\|h\|_{C^{0,\alpha}(M)}\leq C_{r_1}\|f\|_{C^{0,\alpha}_{\nu-2}(M_r)},
\end{equation}
with the constant $C_{r_1}>0$ independent of $r$.

Since $L_{{g_0}}^1:C^{2,\alpha}(M)\rightarrow C^{0,\alpha}(M)$ has a bounded inverse, we can define the function
$$w_1:=\chi (L_{{g_0}}^1)^{-1}(h),$$
where $\chi$ is a smooth, radial function equal to 1 in $M_{2r_2}$, vanishing in $B_{r_2}(p)$ and satisfying $|\partial_r\chi(x)|\leq c|x|^{-1}$ and $|\partial_r^2\chi(x)|\leq c|x|^{-2}$ for all $x\in B_{2r_2}(0)$ and some $r_2\in(r,\frac{1}{4}r_1)$ to be chosen later. This implies that $\|\chi\|_{(2,\alpha),[\sigma,2\sigma]}$ is uniformly bounded for $r\leq \sigma\leq \frac{1}{2}r_1$.

Hence, from (\ref{eq23})
\begin{equation}\label{eq19}
\|w_1\|_{C^{2,\alpha}_\nu(M_r)}\leq C_{r_1}\|(L_{{g_0}}^1)^{-1}(h)\|_{C^{2,\alpha}(M)}\leq C_{r_1}\|h\|_{C^{0,\alpha}(M)}\leq C_{r_1}\|f\|_{C^{0,\alpha}_{\nu-2}(M_r)},
\end{equation}
since $\nu<0$, where the constant $C_{r_1}>0$ is independent of $r$ and $r_2$.

Define an application $F_{r,{{g_0}}}:C^{0,\alpha}_{\nu-2}(M_r)\rightarrow C^{2,\alpha}_{\nu}(M_r)$ as
$$F_{r,{{g_0}}}(f)=w_0+w_1.$$

From (\ref{eq08}), (\ref{eq23}) and (\ref{eq19}) we obtain
\begin{equation}\label{eq31}
\|F_{r,g_0}(f)\|_{C^{2,\alpha}_{\nu}(M_r)}\leq C\|f\|_{C^{0,\alpha}_{\nu-2}(M_r)},
\end{equation}
and
\begin{equation}\label{eq30}
\|L_{g_0}^1(F_{r,g_0}(f))-f\|_{C^{0,\alpha}_{\nu-2}(M_r)}\leq Cr_2^{-1-\nu}\|f\|_{C^{0,\alpha}_{\nu-2}(M_r)}
\end{equation}
since $1-n<\nu<2-n$ implies that $2-\nu>0$ and $-1-\nu>0$, for some constant $C>0$ independent of $r$ and $r_2$. The assertion follows from a perturbation argument by (\ref{eq31}) and (\ref{eq30}), as in the proof of Corollary \ref{cor01}.
\end{proof}
%
%
\subsection{Constant scalar curvature metrics on $M\backslash B_r(p)$}\label{sec13}
In this section we will solve the equation (\ref{eq25}) using the method employed in the interior analysis, the fixed point method. In fact we will find a family of metrics with parameters $\lambda\in\mathbb{R}$, $0<r<r_1$ and some boundary data.

For each $\varphi \in C^{2,\alpha}(\mathbb{S}_r^{n-1})$ $L^2-$orthogonal to the constant functions, let $u_{\varphi}\in C^{2,\alpha}_\nu(M_r)$ be such that $u_{\varphi}\equiv 0$ in $M_{r_1}$ and $u_{\varphi}\circ\Psi=  \eta\mathcal{Q}_r(\varphi)$, where $\mathcal{Q}_r$ is defined in Section \ref{sec06}, $\eta$ is a smooth, radial function equal to 1 in $B_{\frac{1}{2}r_1}(0)$, vanishing in $\mathbb{R}^n\backslash B_{r_1}(0)$, and satisfying $|\partial_r\eta(x)|\leq c|x|^{-1}$ and $|\partial_r^2\eta(x)|\leq c|x|^{-2}$ for all $x\in B_{r_1}(0)$. As before, we have $\|\eta\|_{(2,\alpha),[\sigma,2\sigma]}\leq c$, for every $r\leq\sigma\leq\frac{1}{2}r_1$. Hence, using (\ref{eq74}) we conclude that
\begin{equation}\label{eq111}
\|u_{\varphi}\|_{C^{2,\alpha}_\nu(M_r)}\leq cr^{-\nu}\|\varphi\|_{(2,\alpha),r},
\end{equation}
for all $\nu\geq 1-n$.

Finally, substituting $u:=u_{\varphi}+v$ in equation (\ref{eq25}), we have that to show the existence of a solution of the equation (\ref{eq25}) it is enough to show that for suitable $\lambda\in\mathbb{R}$, and $\varphi\in C^{2,\alpha}(\mathbb{S}^{n-1}_r)$ the map $\mathcal{M}_r(\lambda,\varphi,\cdot):C^{2,\alpha}_\nu(M_r)\rightarrow C^{2,\alpha}_\nu(M_r)$, given by
%
%
\begin{equation}\label{eq60}
\mathcal{M}_r(\lambda,\varphi,v) = -G_{r,{{g_0}}}(Q^1(\lambda G_{p} +u_{\varphi}+v)+L_{{g_0}}^1(u_{\varphi})),
\end{equation}
has a fixed point for small enough $r>0$. We will show that $\mathcal{M}_r(\lambda,\varphi,\cdot)$ is a contraction, and as a consequence the fixed point will depend continuously on the parameters $r$, $\lambda$ and $\varphi$.
%
%
\begin{proping}\label{propo07}
Let $\nu\in(3/2-n,2-n)$, $\delta_4\in(0,1/2)$, $\beta>0$ and $\gamma>0$ be fixed constants. There exists $r_2\in (0,r_1/4)$ such that if $r\in(0,r_2)$, $\lambda\in\mathbb{R}$ with $|\lambda|^2\leq r^{d-2+\frac{3n}{2}}$, and $\varphi\in C^{2,\alpha}(\mathbb{S}_r^{n-1})$ is $L^2-$orthogonal to the constant functions with $\|\varphi\|_{(2,\alpha),r}\leq \betaup r^{2+d-\frac{n}{2}-\delta_4}$, then there is a fixed point of the map $\mathcal{M}_r(\lambda,\varphi, \cdot)$ in the ball of radius $\gamma r^{2+d-\nu-\frac{n}{2}}$ in $C_{\nu}^{2,\alpha}(M_r)$.
\end{proping}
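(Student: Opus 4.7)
The plan mirrors the fixed-point argument used in the proof of Proposition \ref{propo03}. Since the right inverse $G_{r,g_0}$ provided by Proposition \ref{propo04} has operator norm bounded independently of $r$, it will suffice to establish
$$\|\mathcal{M}_r(\lambda,\varphi,0)\|_{C^{2,\alpha}_\nu(M_r)}\le \frac{\gamma}{2}\,r^{2+d-\nu-\frac{n}{2}},$$
and for all $v_1,v_2$ in the closed ball of radius $\gamma\,r^{2+d-\nu-\frac{n}{2}}$ in $C^{2,\alpha}_\nu(M_r)$,
$$\|\mathcal{M}_r(\lambda,\varphi,v_1)-\mathcal{M}_r(\lambda,\varphi,v_2)\|_{C^{2,\alpha}_\nu(M_r)}\le \frac{1}{2}\|v_1-v_2\|_{C^{2,\alpha}_\nu(M_r)}.$$
Banach's fixed point theorem will then produce the fixed point, with the estimate on its norm following from the first inequality applied to the image of the origin.

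To estimate $L_{g_0}^1(u_\varphi)$, I would use that $u_\varphi\circ\Psi=\eta\mathcal{Q}_r(\varphi)$ with $\Delta\mathcal{Q}_r(\varphi)=0$, writing
$$L_{g_0}^1(u_\varphi)=(L_{g_0}^1-\Delta)(\eta\mathcal{Q}_r(\varphi))+2\nabla\eta\cdot\nabla\mathcal{Q}_r(\varphi)+\mathcal{Q}_r(\varphi)\Delta\eta.$$
The last two summands are supported in the fixed annulus $\Omega_{r_1/2,r_1}$, so by (\ref{eq74}) they contribute at most $O(r^{n-1}\|\varphi\|_{(2,\alpha),r})$ to the norm. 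The first summand I would bound via Lemma \ref{lem06} in $\Omega_{r,r_1/2}$, combined with (\ref{eq111}) to produce $C r_1^2\,r^{-\nu}\|\varphi\|_{(2,\alpha),r}$. Invoking $\|\varphi\|_{(2,\alpha),r}\le\beta r^{2+d-n/2-\delta_4}$, both contributions become $O(r^{2+d-\nu-\frac{n}{2}+\varepsilon})$ for some $\varepsilon>0$, precisely because $\nu>\frac{3}{2}-n>1-n+\delta_4$ under the assumption $\delta_4<\frac{1}{2}$.

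For the nonlinear term $Q^1(\lambda G_p+u_\varphi+v)$, the first observation is that on $M_r$ the argument is uniformly small: from $|G_p(x)|\le C|x|^{2-n}$ and $|\lambda|^2\le r^{d-2+\frac{3n}{2}}$ one gets $|\lambda G_p|\le Cr^\kappa$ for some $\kappa>0$, while $|u_\varphi|+|v|\le Cr^{\kappa'}$ similarly. Thus $|Q^1(w)|\le Cw^2$ and $|Q^1(w_1)-Q^1(w_2)|\le C(|w_1|+|w_2|)|w_1-w_2|$ apply, and the analysis reduces to weighted bounds on $(\lambda G_p)^2$, $u_\varphi^2$, $v^2$ and cross products. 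The most delicate of these is
$$\|(\lambda G_p)^2\|_{C^{0,\alpha}_{\nu-2}(M_r)}\le C\lambda^2\sup_{r\le\sigma\le r_1/2}\sigma^{6-2n-\nu}.$$
For $n\ge 5$ the exponent $6-2n-\nu$ is negative, so the supremum is attained at $\sigma=r$ and yields $C\lambda^2 r^{6-2n-\nu}\le Cr^{2+d-\nu-\frac{n}{2}+2}$; for $n=3,4$ the exponent is positive, so the supremum is an $r_1$-dependent constant and the smallness comes directly from $|\lambda|^2\le r^{d-2+\frac{3n}{2}}$, which a short check shows dominates $r^{2+d-\nu-\frac{n}{2}}$ in these dimensions. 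The remaining quadratic and cross terms carry either extra powers of $\|\varphi\|$ or of $\|v\|$ and are absorbed without difficulty.

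The \emph{main obstacle} is calibrating the weight $\nu\in(\tfrac{3}{2}-n,2-n)$ and the bounds on $\lambda$ and $\varphi$ so that every term produces a strictly positive excess power of $r$. The upper bound $\nu<2-n$ is what allows $(\lambda G_p)^2$ (naturally of weight $4-2n$) to be interpreted in $C^{0,\alpha}_{\nu-2}$ after paying a controllable power of $r$; the lower bound $\nu>\tfrac{3}{2}-n$ (together with $\delta_4<\tfrac{1}{2}$) makes the $L_{g_0}^1(u_\varphi)$ contributions sub-dominant; and the exponent $d-2+\tfrac{3n}{2}$ in the hypothesis on $|\lambda|^2$ is exactly what is needed to render $(\lambda G_p)^2$ subordinate to the target weight uniformly in $n\ge 3$. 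Once these balance is verified, the contraction estimate follows by the same bookkeeping applied to the Lipschitz bound on $Q^1$.
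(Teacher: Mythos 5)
Your overall strategy — Banach fixed point with the two estimates on $\mathcal{M}_r(\lambda,\varphi,0)$ and on the Lipschitz constant, handling the nonlinear term via $Q^1(w)\approx Cw^2$, and noticing that the exponent $d-2+\tfrac{3n}{2}$ on $\lambda$ is calibrated to make $(\lambda G_p)^2$ subordinate — is exactly the structure of the paper's proof. Your treatment of the last two summands $2\nabla\eta\cdot\nabla\mathcal{Q}_r(\varphi)+\mathcal{Q}_r(\varphi)\Delta\eta$, and your bookkeeping for $(\lambda G_p)^2$ on the annulus and on the compact part, agree with the paper.

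However, there is a genuine gap in the estimate of the first summand $(L_{g_0}^1-\Delta)(\eta\,\mathcal{Q}_r(\varphi))$. You propose to bound it by composing Lemma \ref{lem06} with inequality (\ref{eq111}), producing $C r_1^2\,r^{-\nu}\|\varphi\|_{(2,\alpha),r}$. But (\ref{eq111}) realizes its supremum at the inner scale $\sigma\sim r$ (because $1-n-\nu<0$), while Lemma \ref{lem06} returns the outer constant $r_1^2$. Combining the two therefore overcounts: with $\|\varphi\|_{(2,\alpha),r}\le\beta r^{2+d-n/2-\delta_4}$, the resulting exponent is $2+d-\nu-\tfrac{n}{2}-\delta_4$, which is a \emph{deficit} of $r^{-\delta_4}$ relative to the target $r^{2+d-\nu-n/2}$, so this term cannot be absorbed into the ball of radius $\gamma r^{2+d-\nu-n/2}$ as $r\to 0$. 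Your heuristic $\nu>\tfrac32-n>1-n+\delta_4$ is the right condition to aim for, but your specific chain of inequalities does not realize it.

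The fix, as in the paper's proof, is to estimate scale by scale rather than through the global weighted norm: from (\ref{eq74}) one has $\|\mathcal{Q}_r(\varphi)\|_{(2,\alpha),[\sigma,2\sigma]}\lesssim r^{n-1}\sigma^{1-n}\|\varphi\|_{(2,\alpha),r}$, hence
$$\sigma^{2-\nu}\|(L_{g_0}^1-\Delta)(u_\varphi)\|_{(0,\alpha),[\sigma,2\sigma]}\lesssim r^{n-1}\sigma^{3-n-\nu}\|\varphi\|_{(2,\alpha),r}.$$
Since $\nu<2-n$ forces $3-n-\nu>0$, the supremum over $\sigma\in[r,\tfrac12 r_1]$ is attained at the \emph{outer} scale $\sigma\sim r_1$, giving $C_{r_1}r^{n-1}\|\varphi\|_{(2,\alpha),r}\lesssim C_{r_1}\beta\,r^{n-1+\nu-\delta_4}\,r^{2+d-\nu-n/2}$, and $n-1+\nu-\delta_4>0$ precisely because $\nu>\tfrac32-n$ and $\delta_4<\tfrac12$. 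This is where the lower bound on $\nu$ actually enters. Passing through (\ref{eq111}) discards the $\sigma$-dependence of $\mathcal{Q}_r(\varphi)$ and loses exactly the factor $r^{n-1+\nu}$ needed to close the estimate.
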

%
%
\begin{proof} As in Proposition \ref{propo03} we will show that
\begin{equation}\label{eq114}
\|\mathcal{M}_r(\lambda,\varphi,0)\|_{C_\nu^{2,\alpha}(M_r)}\leq \frac{1}{2}\gamma r_\varepsilon^{2+d-\nu-\frac{n}{2}}
\end{equation}
and
\begin{equation}\label{eq115}
\|\mathcal{M}_r(\lambda,\varphi,v_1)- \mathcal{M}_r(\lambda,\varphi,v_2)\|_{C_\nu^{2,\alpha}(M_r)}\leq \frac{1}{2}\|v_1-v_2\|_{C_\nu^{2,\alpha}(M_r)},
\end{equation}
for all $v_i\in C^{2,\alpha}_\nu(M_r)$ with $\|v_i\|_{C^{2,\alpha}_\nu(M_r)}\leq \gamma r_\varepsilon^{2+d-\nu-\frac{n}{2}}$, $i=1$ and $2$.

From (\ref{eq60}) and Proposition \ref{propo04} it follows that
$$\|\mathcal{M}_r(\lambda,\varphi,0)\|_{C^{2,\alpha}_\nu(M_r)}\leq c\left(\|Q^1(\lambda G_{p}+u_{\varphi}) \|_{C_{\nu-2}^{0,\alpha}(M_r)}+\|L_{{g_0}}^1(u_{\varphi}) \|_{C_{\nu-2}^{0,\alpha}(\Omega_{r,r_1})}\right),$$
for some constant $c>0$ independent of $r$.

Note that $|\lambda G_p|\leq cr^{1+\frac{d}{2}-\frac{n}{4}},$ with $1+d/2-n/4>0$ and $c>0$ independent of $r$, and from (\ref{eq44})
\begin{equation}\label{eq83}
Q^1(u)=\displaystyle\frac{n(n+2)}{n-2}u^2 \int_0^1\int_0^1(1+stu)^{\frac{6-n}{n-2}}sdsdt
\end{equation}
for $1+stu>0$. Since $0<c<1+st\lambda G_p<C$ in $M_{r_1}$ for small enough $r$, then
$$\max_{t\in[0,1]}\|(1+st\lambda G_p)^{\frac{6-n}{n-2}}\|_{C^{0,\alpha}(M_{\frac{1}{2}r_1})}\leq c,$$
and
\begin{equation}\label{eq84}
\|G_p\|_{C^{0,\alpha}(M_{\frac{1}{2}r_1})}\leq c,
\end{equation}
where $c>0$ is a constant independent of $r$. Thus, by (\ref{eq83}) and (\ref{eq84}) we have
%
%
\begin{equation}\label{eq40}
\|Q^1(\lambda G_{p})\|_{C^{0,\alpha}(M_{\frac{1}{2}r_1})} \leq C\displaystyle|\lambda|^2\leq Cr^{\delta'} r^{2+d-\nu-\frac{n}{2}},
\end{equation}
where the constant $C>0$ does not depend on $r$ and $\delta'=2n-4+\nu>0$ since $\nu>3/2-n$.

Now, observe that (\ref{eq111}) implies 
$|u_\varphi(x)|\leq c\beta r^{2+d-\frac{n}{2}-\delta_4},\;\; \forall x\in M_r,$
with $2+d-n/2-\delta_4>0$. From this and $|\lambda G_p(x)|\leq cr^{1+\frac{d}{2}-\frac{n}{4}}$ for all $x\in\Omega_{r,r_1}$, we get $0<c<1+t(\lambda G_p+u_{\varphi})<C$ for every $0\leq t\leq 1$. Again, using (\ref{eq111}) and $|\lambda \nabla G_p|\leq cr^{\frac{d}{2}-\frac{n}{4}}$, we conclude that the H\"older norm of $(1+t(\lambda G_{p}+u_{\varphi}))^{\frac{6-n}{n-2}}$ is bounded independently of $r$ and $t$. Therefore,
$$\max_{0\leq t\leq 1} \|(1+t(\lambda G_{p}+u_{\varphi}))^{\frac{6-n}{n-2}} \|_{(0,\alpha),[\sigma,2\sigma]}\leq C.$$

From (\ref{eq83}) we obtain
$$ \sigma^{2-\nu}\|Q^1(\lambda G_{p}+u_{\varphi})\|_{(0,\alpha),[\sigma,2\sigma]} \leq C\sigma^{2-\nu}\|\lambda G_{p}+u_{\varphi}\|_{(0,\alpha),[\sigma,2\sigma]}^2 \leq C_\beta r^{\frac{5}{2}+d-\nu-\frac{n}{2}},$$
since $n\geq 3$, $\delta_4<1/2$, $r\leq\sigma$ and $\nu>3/2-n$ implies that $6+2d-\nu-n-2\delta_4>5/2+d-\nu-n/2$ and  $9/2-\nu-2n<0$.

Therefore
%
%
\begin{equation}\label{eq41}
\|Q^1(\lambda G_{p}+u_{\varphi}) \|_{C_{\nu-2}^{0,\alpha}(\Omega_{r,r_1})} \leq C_\beta r^{\frac{1}{2}} r^{2+d-\nu-\frac{n}{2}},
\end{equation}
and from (\ref{eq40}) and (\ref{eq41}), we get
%
%
\begin{equation}\label{eq42}
\|Q^1(\lambda G_{p}+u_{\varphi}) \|_{C_{\nu-2}^{0,\alpha}(M_r)}\leq C_\beta r^{\delta''} r^{2+d-\nu-\frac{n}{2}},
\end{equation}
for some constant $\delta''>0$ independent of $r$.

From $\Delta\mathcal{Q}_r= \delta^{ij}\partial_i\partial_j\mathcal{Q}_r=0$, $(g_0)_{ij}=\delta_{ij}+O(|x|^2)$ and $\det g_0=1+O(|x|^2)$, we obtain
$$ \|\Delta_{g_0}(u_{\varphi})\|_{(0,\alpha),[\sigma,2\sigma]} \leq C \left(\|\mathcal{Q}_r(\varphi)\|_{(2,\alpha),[\sigma,2\sigma]}+ \sigma^{-2}\|\eta\|_{(0,\alpha),[\sigma,2\sigma]} \|\mathcal{Q}_r(\varphi)\|_{(2,\alpha),[\sigma,2\sigma]}\right),$$
where the term with $\sigma^{-2}$ appears only for $\sigma>\frac{1}{4}r_1$ , since $\partial_i\eta\equiv 0$ in $B_{\frac{1}{2}r_1}(0)$.

Therefore, using that $3-n-\nu>0$ and $\delta_4\in(0,1/2)$ we get
\begin{equation*}\label{eq87}
\sigma^{2-\nu}\|L_{{g_0}}^1(u_\varphi) \|_{(0,\alpha),[\sigma,2\sigma]} 
\leq C_{r_1}r^{n-1} \|\varphi \|_{(2,\alpha),r}= C_{r_1}\beta r^{n-1+\nu-\delta_4}r^{2+d-\nu-\frac{n}{2}},
\end{equation*}
with $n-1+\nu-\delta_4>0$.

This implies
%
%
\begin{equation}\label{eq61}
\|L_{{g_0}}^1(u_\varphi) \|_{C^{0,\alpha}_{\nu-2}(\Omega_{r,r_1})} \leq C_{r_1}\beta r^{n-1+\nu-\delta_4}r^{2+d-\nu-\frac{n}{2}},
\end{equation}
with $n-1+\nu-\delta_4>0$.

Therefore, by (\ref{eq42}) and (\ref{eq61}) we obtain (\ref{eq114}) for $r>0$ small enough.

For the same reason as before, we have
$$\|\mathcal{M}_r(\lambda,\varphi,v_1)- \mathcal{M}_r(\lambda,\varphi,v_0)\|_{C^{2,\alpha}_\nu(M_r)} \leq c\|Q^1(\lambda G_{p}+u_\varphi+v_1)-Q^1(\lambda G_{p}+u_\varphi+v_0)\|_{C^{0,\alpha}_{\nu-2}(M_r)}.$$

Furthermore,
$$Q^1(\lambda G_{p}+u_\varphi+v_1)-Q^1(\lambda G_{p}+u_\varphi+v_0)= \displaystyle\frac{n(n+2)}{n-2} (v_1-v_0)\int^1_0\int_0^1(1+sz_t)^{\frac{6-n}{n-2}} z_tdsdt,$$
where $z_t=\lambda G_{p}+u_\varphi+v_0+t(v_1-v_0)$, since for small enough $r>0$ we have $0<c<1+sz_t<C$. This implies
$$\|(1+sz_t)^{\frac{6-n}{n-2}}\|_{C^{0,\alpha}(M_{\frac{1}{2}r_1})}\leq C\;\;\;\mbox{ and }\;\;\;\|(1+sz_t)^{\frac{6-n}{n-2}}\|_{(0,\alpha),[\sigma,2\sigma]}\leq C,$$
with the constant $C>0$ independent of $r$. Then, by (\ref{eq84}), we have
$$\|Q^1(\lambda G_{p}+v_1)- Q^1(\lambda G_{p}+v_0)\|_{C^{0,\alpha}(M_{r_1})} \leq C(r^{\frac{d}{2}-1+\frac{3n}{4}}+r^{2+d-\nu-\frac{n}{2}}) \|v_1-v_0\|_{C^{2,\alpha}_\nu(M_r)}$$
and
\begin{equation*}\label{eq88}
\sigma^{2-\nu}\|Q^1(\lambda G_{p}+u_{\varphi}+v_1)-Q^1(\lambda G_{p}+u_{\varphi}+v_0)\|_{(0,\alpha),[\sigma,2\sigma]}\leq$$ $$
\begin{array}{cl}
\hspace{1,5cm}\leq &  C\left(|\lambda|\sigma^{4-n} +\sigma^2\|u_{\varphi}\|_{(2,\alpha),[\sigma,2\sigma]}+ \sigma^2\|v_1\|_{(2,\alpha),[\sigma,2\sigma]}\right.\\
&\hspace{0,5cm}+ \left.\sigma^2\|v_0\|_{(2,\alpha), [\sigma,2\sigma]}\right)\sigma^{-\nu}\|v_1-v_0\| _{(0,\alpha),[\sigma,2\sigma]}\\
\hspace{1,5cm}\leq & C_{r_1,\beta}r^{2+\frac{d}{2}-\frac{n}{4}} \|v_1-v_0\| _{C^{2,\alpha}_\nu(M_r)}
\end{array}
\end{equation*}
since $1+\nu<0$, $2+d/2-n/4<3+d-n/2 <4+d-n/2-\delta_4$ and $0<\delta_4<1/2$. Notice that $2+d/2-n/4>0$.

Therefore, we deduce (\ref{eq115}) for small enough $r>0$.
\end{proof}
From Proposition \ref{propo07} we get the main result of this section.
%
%
\begin{theorem}\label{teo02}
Let $\nu\in(3/2-n,2-n)$, $\delta_4\in(0,1/2)$, $\beta>0$ and $\gamma>0$ be fixed constants. There is $r_2\in(0,r_1/2)$ such that  if $r\in(0,r_2)$, $\lambda\in\mathbb{R}$ with $|\lambda|^2\leq r^{d-2+\frac{3n}{2}}$, and $\varphi\in C^{2,\alpha}(\mathbb{S}_r^{n-1})$ is $L^2-$orthogonal to the constant functions with $\|\varphi\|_{(2,\alpha),r}\leq \beta r^{2+d-\frac{n}{2}-\delta_4}$, then there is a solution $V_{\lambda,\varphi}\in C^{2,\alpha}_\nu(M_r)$ to the problem
$$\left\{\begin{array}{l}
H_{{g_0}} (1+\lambda G_{p}+ u_\varphi+V_{\lambda,\varphi})=0\;\;\;\mbox{ in }\;\;\;M_r\\
(u_{\varphi}+V_{\lambda,\varphi})\circ\Psi|_{\partial B_r(0)}-\varphi\in\mathbb{R}\;\;\;\mbox{ on }\;\;\;\partial M_r
\end{array}\right..$$

Moreover,
\begin{equation}\label{eq72}
\|V_{\lambda,\varphi}\|_{C^{2,\alpha}_\nu(M_r)}\leq \gamma r^{2+d-\nu-\frac{n}{2}},
\end{equation}
and
\begin{equation}\label{eq86}
\|V_{\lambda,\varphi_1}- V_{\lambda,\varphi_2}\|_{C^{2,\alpha}_\nu(M_r)} \leq Cr^{\delta_5-\nu}\|\varphi_1-\varphi_2\|_{(2,\alpha),r},
\end{equation}
for some constant $\delta_5>0$ small enough independent of $r$.
\end{theorem}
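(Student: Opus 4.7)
The theorem is essentially a packaging of Proposition \ref{propo07}: the solution $V_{\lambda,\varphi}$ is precisely the fixed point of $\mathcal{M}_r(\lambda,\varphi,\cdot)$ produced there. My plan is therefore to (i) quote Proposition \ref{propo07} to obtain existence and the norm bound (\ref{eq72}), (ii) verify that the fixed point indeed solves the stated boundary value problem, and (iii) derive the Lipschitz-in-$\varphi$ bound (\ref{eq86}) from the contraction estimate (\ref{eq115}).

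For step (ii), I would expand $H_{g_0}$ about $1$. Since $R_{g_0}=n(n-1)$ we have $H_{g_0}(1)=0$, and by definition of $G_p$ we have $L_{g_0}^1(G_p)=0$; hence
\begin{equation*}
H_{g_0}(1+\lambda G_p+u_\varphi+V_{\lambda,\varphi})
= L_{g_0}^1(u_\varphi+V_{\lambda,\varphi})+Q^1(\lambda G_p+u_\varphi+V_{\lambda,\varphi}).
\end{equation*}
The fixed-point identity $V_{\lambda,\varphi}=\mathcal{M}_r(\lambda,\varphi,V_{\lambda,\varphi})$ together with the definition (\ref{eq60}) of $\mathcal{M}_r$ and the defining property $L_{g_0}^1\circ G_{r,g_0}=\mathrm{Id}$ from Proposition \ref{propo04} gives $L_{g_0}^1(V_{\lambda,\varphi})=-L_{g_0}^1(u_\varphi)-Q^1(\lambda G_p+u_\varphi+V_{\lambda,\varphi})$, so the right-hand side above vanishes. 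For the boundary condition, recall from Section \ref{sec13} that $u_\varphi\circ\Psi=\eta\,\mathcal{Q}_r(\varphi)$ with $\eta\equiv 1$ near $\partial B_r(0)$ and $\mathcal{Q}_r(\varphi)|_{\partial B_r(0)}=\varphi$, so $u_\varphi\circ\Psi=\varphi$ on $\partial B_r(0)$; meanwhile the output of $G_{r,g_0}$ is constant on $\partial B_r(p)$ by Proposition \ref{propo04}, so $V_{\lambda,\varphi}\circ\Psi\in\mathbb{R}$ on $\partial B_r(0)$. This yields $(u_\varphi+V_{\lambda,\varphi})\circ\Psi|_{\partial B_r(0)}-\varphi\in\mathbb{R}$ as required.

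For step (iii), the contraction estimate (\ref{eq115}) gives the standard trick
\begin{equation*}
\|V_{\lambda,\varphi_1}-V_{\lambda,\varphi_2}\|_{C^{2,\alpha}_\nu(M_r)}
\leq 2\,\bigl\|\mathcal{M}_r(\lambda,\varphi_1,V_{\lambda,\varphi_2})-\mathcal{M}_r(\lambda,\varphi_2,V_{\lambda,\varphi_2})\bigr\|_{C^{2,\alpha}_\nu(M_r)}.
\end{equation*}
Since $u_{\varphi_1}-u_{\varphi_2}=u_{\varphi_1-\varphi_2}$ by linearity, the right-hand side is controlled by
\begin{equation*}
\bigl\|L_{g_0}^1(u_{\varphi_1-\varphi_2})\bigr\|_{C^{0,\alpha}_{\nu-2}(\Omega_{r,r_1})}
+\bigl\|Q^1(\lambda G_p+u_{\varphi_1}+V_{\lambda,\varphi_2})-Q^1(\lambda G_p+u_{\varphi_2}+V_{\lambda,\varphi_2})\bigr\|_{C^{0,\alpha}_{\nu-2}(M_r)},
\end{equation*}
up to the uniform bound on $G_{r,g_0}$. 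The first term is handled exactly as in (\ref{eq61}), yielding a bound of order $r^{n-1}\|\varphi_1-\varphi_2\|_{(2,\alpha),r}$, while the second is estimated as in the contraction step of Proposition \ref{propo07} (with $u_\varphi$ playing the role that $v_1-v_2$ played before), which via (\ref{eq111}) produces a bound of the form $Cr^{\delta}\,r^{-\nu}\|\varphi_1-\varphi_2\|_{(2,\alpha),r}$ for some $\delta>0$. Combining gives (\ref{eq86}) with $\delta_5=\min\{\delta,n-1+\nu\}>0$, which is positive precisely because $\nu>3/2-n$.

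The work has already been done in Proposition \ref{propo07}, so no real obstacle remains; the only point requiring a bit of care is checking that the Lipschitz constants in $\varphi$ extracted from the $Q^1$ term are uniform on the ball of radius $\gamma r^{2+d-\nu-n/2}$ where $V_{\lambda,\varphi_2}$ lives, but this is immediate from the same $L^\infty$-smallness estimates used to control the factor $(1+sz_t)^{(6-n)/(n-2)}$ in the proof of (\ref{eq115}).
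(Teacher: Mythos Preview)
Your proposal is correct and follows essentially the same approach as the paper: the paper's proof simply states that $V_{\lambda,\varphi}$ is the fixed point from Proposition \ref{propo07} (giving (\ref{eq72})) and that (\ref{eq86}) ``follows similarly to (\ref{eq82}),'' which is precisely the contraction trick $\|V_1-V_2\|\leq 2\|\mathcal{M}_r(\lambda,\varphi_1,V_2)-\mathcal{M}_r(\lambda,\varphi_2,V_2)\|$ that you carry out. Your step (ii) verifying the boundary value problem is more explicit than the paper (which omits this entirely), but the reasoning is exactly what is intended.
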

%
%
\begin{proof} The solution $V_{\lambda,\varphi}$ is the fixed point of $\mathcal{M}_r(\lambda,\varphi,\cdot)$ given by Proposition \ref{propo07} with the estimate (\ref{eq72}). The inequality (\ref{eq86}) follows similarly to (\ref{eq82}).
\end{proof}
Define $f:=1/\mathcal{F}$, where $\mathcal{F}$ is the function defined in Section \ref{sec07}. We have $g_0=f^{\frac{4}{n-2}}g$ with $f=1+O(|x|^2)$ in conformal normal coordinates centered at $p$. We will denote the full conformal factor of the resulting constant scalar curvature metric in $M_r$ with respect to the metric $g$ as $\mathcal{B}_r(\lambda,\varphi)$, that is, the metric
$$\tilde{g}=\mathcal{B}_r(\lambda,\varphi)^{\frac{4}{n-2}}g$$
has constant scalar curvature $R_{\tilde{g}}=n(n-1)$, where
$$\mathcal{B}_r(\lambda,\varphi):=f+\lambda fG_p+fu_{\varphi}+fV_{\lambda,\varphi}.$$

%
%

\section{Constant Scalar Curvature on $M\backslash\{p\}$}\label{sec05}
%
%
The main task of this section is to prove the following theorem:
%
%
\begin{theorem}\label{teo04}
Let $(M^n,{g_0})$ be an $n-$dimensional compact Riemannian manifold of scalar curvature $R_{g_0}=n(n-1)$, nondegenerate about 1, and let $p\in M$ be such that $\nabla^kW_{g_0}(p)=0$ for $k=0,\ldots,d-2$, where $W_{g_0}$ is the Weyl tensor. Then there exist a constant $\varepsilon_0$ and a one-parameter family of complete metrics $g_\varepsilon$ on $M\backslash\{p\}$ defined for $\varepsilon\in(0,\varepsilon_0)$ such that:
\begin{enumerate}
\item[i)] each $g_\varepsilon$ is conformal to $g_0$ and has constant scalar curvature $R_{g_\varepsilon}=n(n-1)$;
\item[ii)] $g_\varepsilon$ is asymptotically Delaunay;
\item[iii)] $g_\varepsilon\rightarrow g_0$ uniformly on compact sets in $M\backslash\{p\}$ as $\varepsilon\rightarrow 0$.
\end{enumerate}
\end{theorem}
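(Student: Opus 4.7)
The plan is to glue the interior Delaunay-type piece $\mathcal{A}_\varepsilon(R,a,\phi)$ from Theorem \ref{teo01} to the exterior perturbation piece $\mathcal{B}_{r_\varepsilon}(\lambda,\varphi)$ from Theorem \ref{teo02} along the sphere $\partial B_{r_\varepsilon}(p)$, with $r_\varepsilon=\varepsilon^s$ and $s$ in the range of Remark \ref{rem01}. Both pieces satisfy $H_g=0$ on their respective sides, so if their Cauchy data match on $\partial B_{r_\varepsilon}(p)$, then the combined function is a weak solution of $H_{g_0}(u)=0$ on $M\setminus\{p\}$; ellipticity of the constant scalar curvature equation then upgrades it to a smooth solution, completeness follows from the lower bound $\mathcal{A}_\varepsilon\geq c|x|^{(2-n)/2}$ of Section \ref{sec12}, and the asymptotic Delaunay property is built into $u_{\varepsilon,R,a}$. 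The available parameters are $b\in\mathbb{R}$ (encoding $R$ via $R^{(2-n)/2}=2(1+b)/\varepsilon$), $a\in\mathbb{R}^n$, $\lambda\in\mathbb{R}$, and the boundary data $\phi\in\pi''(C^{2,\alpha}(\mathbb{S}^{n-1}_{r_\varepsilon}))$ and $\varphi\in C^{2,\alpha}(\mathbb{S}^{n-1}_{r_\varepsilon})$ with $\varphi\perp 1$. Matching the zeroth and first order Cauchy data and splitting each condition into constant, coordinate-function, and high eigenmode components yields exactly $2(n+1)$ scalar equations plus two high-eigenmode equations, matching the dimension count of the free parameters.

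First I would handle the high-eigenmode conditions by a contraction in $(\phi,\varphi)$ for each fixed choice of $(b,a,\lambda)$ in suitable small balls. Using the Poisson operators $\mathcal{P}_{r_\varepsilon}$ and $\mathcal{Q}_{r_\varepsilon}$ from Section \ref{sec10}, the equations
\[
\pi''_{r_\varepsilon}\bigl((\mathcal{A}_\varepsilon-\mathcal{B}_{r_\varepsilon})|_{\partial B_{r_\varepsilon}}\bigr)=0,\qquad
\pi''_{r_\varepsilon}\bigl(\partial_r(\mathcal{A}_\varepsilon-\mathcal{B}_{r_\varepsilon})|_{\partial B_{r_\varepsilon}}\bigr)=0
\]
are rewritten as a fixed-point problem $(\phi,\varphi)=T_\varepsilon(\phi,\varphi)$, in which the Lipschitz estimates \eqref{eq82} and \eqref{eq86}, combined with the norm bounds \eqref{eq73} and \eqref{eq74} on the Poisson extensions, produce a small Lipschitz constant proportional to a positive power of $r_\varepsilon$. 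Next I would solve the low-mode equations for $(b,a,\lambda)$ by an implicit function/fixed-point argument. Using Corollaries \ref{cor02} and \ref{cor04}, the constant component of $\mathcal{A}_\varepsilon$ on $\partial B_{r_\varepsilon}$ is $1+b+\frac{\varepsilon^2}{4(1+b)}r_\varepsilon^{2-n}+(\text{small})$, while the constant component of $\mathcal{B}_{r_\varepsilon}$ is $1+\lambda r_\varepsilon^{2-n}+(\text{small})$; matching values and radial derivatives gives a $2\times 2$ linear system in $(b,\lambda)$ whose leading part is nondegenerate because $u_{\varepsilon,R}$ and $\lambda G_p$ contribute to the $|x|^{2-n}$ coefficient with different scalings in $\varepsilon$. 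The coordinate-function modes are handled analogously: Corollary \ref{cor02} makes the dependence of $\mathcal{A}_\varepsilon$ on $a$ explicit, Remark \ref{remark01} supplies the explicit coordinate-function part of $\mathcal{Q}_{r_\varepsilon}(\varphi)$, and the resulting linear system for $a$ is invertible with controlled inverse. All coupling terms to $(\phi,\varphi)$ and to the correctors $U_{\varepsilon,R,a,\phi}$, $V_{\lambda,\varphi}$, $w_{\varepsilon,R}$, $v_\phi$, $u_\varphi$ are of order a positive power of $r_\varepsilon$ by \eqref{eq71}, \eqref{eq72} and \eqref{eq57}, which allows the full joint problem to be solved by iteration.

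The main obstacle is showing that the output of the coupled fixed-point map remains inside the a priori balls imposed upstream, namely $\|\phi\|_{(2,\alpha),r_\varepsilon}\leq\kappa r_\varepsilon^{2+d-n/2-\delta_1}$ and $\|\varphi\|_{(2,\alpha),r_\varepsilon}\leq\beta r_\varepsilon^{2+d-n/2-\delta_4}$ (Theorems \ref{teo01} and \ref{teo02}) as well as $|a|r_\varepsilon^{1-\delta_2}\leq 1$, $|b|\leq 1/2$, and $|\lambda|^2\leq r_\varepsilon^{d-2+3n/2}$. This is a bookkeeping exercise in the exponents: every estimate feeding into $T_\varepsilon$ and into the low-mode system must beat the ball radius, and this is precisely what constrains $s$ to the interval of Remark \ref{rem01} and forces the positivity of $\delta_3-\mu$, $\delta_5-\nu$ and of the various error exponents appearing in Sections \ref{sec02} and \ref{sec04}. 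Once a joint fixed point is produced, the glued function is $C^{1,\alpha}$ across $\partial B_{r_\varepsilon}(p)$, elliptic regularity gives smoothness, and letting $\varepsilon\to 0$ forces $\lambda,\varphi\to 0$ and hence $g_\varepsilon\to g_0$ uniformly on compact subsets of $M\setminus\{p\}$, which establishes all three conclusions of the theorem.
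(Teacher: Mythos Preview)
Your proposal is correct and follows essentially the same gluing scheme as the paper. Two organizational points are worth noting. First, rather than running a joint contraction in the pair $(\phi,\varphi)$, the paper slaves the interior high-eigenmode datum to the exterior one: it sets $\varphi=\omega+\vartheta$ (coordinate part plus high part), \emph{defines} $\phi_\vartheta:=\pi''_{r_\varepsilon}((\mathcal{B}_{r_\varepsilon}(\lambda,\omega+\vartheta)-u_{\varepsilon,R,a}-w_{\varepsilon,R})|_{\mathbb{S}^{n-1}_{r_\varepsilon}})$ so that the zeroth-order high-eigenmode match is automatic, and then solves a single fixed-point equation in $\vartheta$ for the derivative match via the Dirichlet-to-Neumann isomorphism $\mathcal{Z}(\vartheta)=\partial_r(\mathcal{P}_1(\vartheta)-\mathcal{Q}_1(\vartheta))$. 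Second, the finite-dimensional matching is done with Brouwer's theorem (continuity only), first for $(b,\lambda)$ from the constant-mode system and then for $(a,\omega)$ from the coordinate-mode system; in particular the coordinate-mode equations are a $2n\times 2n$ system in the pair $(a,\omega)$, not in $a$ alone as your wording suggests, with leading matrix built from $F(r_\varepsilon)=(n-2)u_{\varepsilon,R}+r_\varepsilon\partial_r u_{\varepsilon,R}$ and $G(r_\varepsilon)$ whose invertibility comes from Corollary \ref{cor04}. These are packaging differences; the estimates you invoke and the exponent bookkeeping you describe are exactly what the paper uses.
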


If the dimension is at most 5, no condition on the Weyl tensor is needed. Let us give some examples of non locally conformally flat manifolds for which the theorem applies.
\vspace{0.3cm}

\noindent{\bf Example:} The spectrum of the Laplacian on the $n-$sphere $\mathbb{S}^n(k)$ of constant curvature $k>0$ is given by Spec$(\Delta_g)=\{i(n+i-1)k; i=0,1,\ldots\}$. Consider the product manifolds $\mathbb{S}^2(k_1)\times \mathbb{S}^2(k_2)$ and $\mathbb{S}^2(k_3)\times \mathbb{S}^3(k_4)$. If we normalize so that the curvatures satisfy the conditions $k_1+k_2=6$ and $k_3+3k_4=10$, then the operator given in definition \ref{def3} with $u=1$ is equal to $L_{g_{12}}^1=\Delta_{g_{12}}+4$ and $L_{g_{34}}^1=\Delta_{g_{34}}+5$, where $g_{12}$ and $g_{34}$ are the standard metrics on $\mathbb{S}^2(k_1)\times \mathbb{S}^2(k_2)$ and $\mathbb{S}^2(k_3)\times \mathbb{S}^3(k_4)$, respectively. Notice that we have $R_{g_{12}}=12$ and $R_{g_{23}}=20$. 

It is not difficult to show that the spectra satisfy
$$\mbox{Spec}(L_{g_{12}}^1)\subseteq \{i(i+1)k_m-4;m=1,2\mbox{ and }i=0,1,\ldots\}\cup [8,\infty)$$
and
$$\mbox{Spec}(L_{g_{34}}^1)\subseteq \{i(i+1)k_3-4,i(i+2)k_4-4;i=0,1,\ldots\}\cup[6,\infty).$$

The product $\mathbb{S}^2(k_1)\times \mathbb{S}^2(k_2)$ with normalized constant scalar curvature equal to 12, is degenerate if and only if $k_1=4/(i(i+1))$ or $k_2=4/(i(i+1))$ for some $i=1,2,\ldots$ For the product $\mathbb{S}^2(k_3)\times \mathbb{S}^3(k_4)$ with normalized constant scalar curvature equal to 20, we conclude that it is degenerate if and only if $k_3=4/(i(i+1))$ or $k_4=4/(i(i+2))$, for some $i=1,2,\ldots$

Therefore we conclude that only countably many of these products are degenerate.
\vspace{0.3cm}

In previous sections we have constructed a family of constant scalar curvature metrics on $\overline{B_{r_\varepsilon}(p)}$, conformal to ${g_0}$ and singular at $p$, with  parameters $\varepsilon\in(0,\varepsilon_0)$ for some $\varepsilon_0>0$, $R>0$, $a\in\mathbb{R}^n$ and high eigenmode boundary data $\phi$. We have also constructed a family of constant scalar curvature metrics on $M_r=M\backslash B_r(p)$ conformal to ${g_0}$ with parameters $r\in(0,r_2)$ for some $r_2>0$, $\lambda\in\mathbb{R}$ and boundary data $\varphi$ $L^2-$orthogonal to the constant functions.

In this section we examine suitable choices of the parameter sets on each piece so that the Cauchy data can be made to match up to be $C^1$ at the boundary of $B_{r_\varepsilon}(p)$. In this way we obtain a weak solution to $H_{g_0}(u)=0$ on $M\backslash\{p\}$. It follows from elliptic regularity theory and the ellipticity of $H_{g_0}$ that the glued solutions are smooth metric.

To do this we will split the equation that the Cauchy data must  satisfy in an equation corresponding to the high eigenmode,  another one  corresponding to the space spanned by the constant functions, and $n$  equations corresponding to the space spanned by the coordinate functions.

%
%
\subsection{Matching the Cauchy data}\label{sec14}
From Theorem \ref{teo01} there is a family of constant scalar curvature metrics in $\overline{B_{r_\varepsilon}(p)}\backslash\{p\}$, for small enough $\varepsilon>0$, satisfying the following:
$$\hat{g}=\mathcal{A}_\varepsilon(R,a,\phi)^{\frac{4}{n-2}}g,$$
with $R_{\hat{g}}=n(n-1)$,
%
%
\begin{equation*}\label{eq64}
\mathcal{A}_\varepsilon(R,a,\phi)=u_{\varepsilon,R,a}+w_{\varepsilon,R}+ v_\phi+U_{\varepsilon,R,a,\phi},
\end{equation*}
in conformal normal coordinates centered at $p$, and with
\begin{enumerate}
\item[I1)] $R^{\frac{2-n}{2}}=2(1+b)\varepsilon^{-1}$ and $|b|\leq 1/2$;
\item[I2)] $\phi\in\pi''(C^{2,\alpha} (\mathbb{S}^{n-1}_{r_\varepsilon}))$ with $\|\phi\|_{(2,\alpha),r_\varepsilon}\leq \kappa r_\varepsilon^{2+d-\frac{n}{2}-\delta_1}$, $\delta_1\in(0,(8n-16)^{-1})$ and $\kappa>0$ is some constant to be chosen later;
\item[I3)] $|a|r_\varepsilon^{1-\delta_2}\leq 1$ with $\delta_2>\delta_1$;
\item[I4)] $w_{\varepsilon,R}\equiv 0$ for $3\leq n\leq 7$, $w_{\varepsilon,R}\in\pi''(C^{2,\alpha}_{2+d-\frac{n}{2}} (B_{r_\varepsilon}(0)\backslash\{0\}))$ is the solution of the equation (\ref{eq35}) for $n\geq 8$;
\item[I5)] $U_{\varepsilon,R,a,\phi}\in C^{2,\alpha}_\mu(B_{r_\varepsilon}(0)\backslash\{0\})$ with $\pi''_{r_\varepsilon}(U_{\varepsilon,R,a,\phi}|_{\partial B_{r_\varepsilon}(0)})=0$, satisfies the inequality (\ref{eq82}) and has norm bounded by $\tau r_\varepsilon^{2+d-\mu-\frac{n}{2}}$, with $\mu\in(1,5/4)$ and $\tau>0$ is independent of $\varepsilon$ and $\kappa$.
\end{enumerate}

Also, from Theorem \ref{teo02} there is a family of constant scalar curvature metrics  in $M_{r_\varepsilon}=M\backslash B_{r_\varepsilon}(p)$, for small enough $\varepsilon>0$, satisfying the following:
$$\tilde{g}=\mathcal{B}_{r_\varepsilon}(\lambda,\varphi)^ {\frac{4}{n-2}}g,$$
with $R_{\tilde{g}}=n(n-1)$,
%
%
\begin{equation*}\label{eq65}
\mathcal{B}_{r_\varepsilon}(\lambda,\varphi)=f+\lambda f G_p+fu_\varphi+ fV_{\lambda,\varphi},
\end{equation*}
 in conformal normal coordinates centered at $p$, with
\begin{enumerate}
\item[E1)] $f=1+\overline{f}$ with $\overline{f}=O(|x|^2)$;
\item[E2)] $\lambda\in\mathbb{R}$ with $|\lambda|^2\leq r_\varepsilon^{d-2+\frac{3n}{2}}$;
\item[E3)] $\varphi\in C^{2,\alpha}(\mathbb{S}^{n-1}_{r_\varepsilon})$ is $L^2-$orthogonal to the constant functions and belongs to the ball of radius $\beta r_\varepsilon^{2+d-\frac{n}{2}-\delta_4}$, $\delta_4\in(0,1/2)$ and $\beta>0$ is a constant to be chosen later;
\item[E4)] $V_{\lambda,\varphi}\in C^{2,\alpha}_\nu(M_{r_\varepsilon})$ is constant on $\partial M_{r_\varepsilon}$, satisfies the inequality (\ref{eq86}) and has norm bounded by $\gamma r_\varepsilon^{2+d-\nu-\frac{n}{2}}$, with $\nu\in(3/2-n,2-n)$ and $\gamma>0$ is a constant independent of $\varepsilon$ and $\beta$.
\end{enumerate}

Recall that $r_\varepsilon=\varepsilon^s$ with $(d+1-\delta_1)^{-1}<s<4(d-2+3n/2)^{-1}$, see Remark \ref{rem01}. For example, we can choose $\delta_1=1/8n$ and $s=2(n-1-1/2n)^{-1}$.

We want to show that there are parameters, $R\in\mathbb{R}_+$, $a\in\mathbb{R}^n$, $\lambda\in\mathbb{R}$ and $\varphi,\phi\in C^{2,\alpha}(\mathbb{S}^{n-1}_{r_\varepsilon})$ such that
\begin{equation}\label{eq47}
\left\{\begin{array}{lcl}
\mathcal{A}_\varepsilon(R,a,\phi) & = & \mathcal{B}_{r_\varepsilon}(\lambda,\varphi)\\
\partial_r\mathcal{A}_\varepsilon(R,a,\phi) & = & \partial_r\mathcal{B}_{r_\varepsilon}(\lambda,\varphi)
\end{array}\right.
\end{equation}
on $\partial B_{r_\varepsilon}(p)$.

First, let $\delta_1\in(0,(8n-16)^{-1})$ be fixed. If we take $\omega$ and $\vartheta$ in the ball of radius $r_\varepsilon^{2+d-\frac{n}{2}-\delta_1}$ in $C^{2,\alpha}(\mathbb{S}^{n-1}_{r_\varepsilon})$, with $\omega$ belonging to the space spanned by the coordinate functions, $\vartheta$ belonging to the high eigenmode, and we define $\varphi:=\omega+\vartheta$, then we can apply Theorem \ref{teo02} with $\beta= 2$ and $\delta_4=\delta_1$, to define $\mathcal{B}_{r_\varepsilon}( \lambda,\omega+\vartheta)$, since $\|\varphi\|_{(2,\alpha),r_\varepsilon}\leq 2r_\varepsilon^{2+d-\frac{n}{2}-\delta_1}$.

Now define
\begin{equation}\label{eq66}
\begin{array}{lcl}
\phi_\vartheta & := & \pi''_{r_\varepsilon}((\mathcal{B}_{r_\varepsilon}( \lambda,\omega+\vartheta)-u_{\varepsilon,R,a}-w_{\varepsilon,R}) |_{\mathbb{S}_{r_\varepsilon}^{n-1}})\\
\\
& = & \pi''_{r_\varepsilon}((\overline{f}+ \lambda fG_p+\overline{f}u_{\omega+\vartheta}+\overline{f} V_{\lambda,\omega+\vartheta}- u_{\varepsilon,R,a} - w_{\varepsilon,R})|_{\mathbb{S}_{r_\varepsilon}^{n-1}})+\vartheta,
\end{array}
\end{equation}
where in the second equality we use that $\pi''_{r_\varepsilon}(u_{\omega+\vartheta}|_ {\mathbb{S}^{n-1}_{r_\varepsilon}})=\vartheta$, $\pi''_{r_\varepsilon}(V_{\lambda,\omega+\vartheta} |_{\mathbb{S}^{n-1}_{r_\varepsilon}})=0$ and $f=1+\overline{f}$, with $\overline{f}=O(|x|^2)$. 

We have to derive an estimate for $\|\phi_\vartheta \|_{(2,\alpha),r_\varepsilon}$. To do this, we will use the inequality (\ref{eq43}) in Lemma \ref{lem07}. But before, from (\ref{eq93}) in Corollary \ref{cor02}, we obtain
\begin{equation}\label{eq92}
\pi''_{r_\varepsilon}(u_{\varepsilon,R,a} |_{\mathbb{S}_{r_\varepsilon}^{n-1}})=O(|a|^2r_\varepsilon^2),
\end{equation}
since $r_\varepsilon=\varepsilon^s$ and $R^{\frac{2-n}{2}}=2(1+b)\varepsilon^{-1}$ with $s<4(d-2+3n/2)^{-1}<2(n-2)^{-1}$ and $|b|\leq 1/2$ implies that $R<r_\varepsilon$ for small enough $\varepsilon>0$. 

Let $1+d/2-n/4>\delta_2>\delta_1$ and let $a\in\mathbb{R}^n$ with $|a|^2\leq r_\varepsilon^{d-\frac{n}{2}}$ ($\delta_2=1/8$, for example). Hence we have that $|a|r_\varepsilon^{1-\delta_2}\leq r_\varepsilon^{1+\frac{d}{2}-\frac{n}{4}-\delta_2}$ tends to zero when $\varepsilon$ goes to zero, and I3) is satisfied for $\varepsilon>0$ small enough. Furthermore, since $|a|^2r_\varepsilon^2\leq r_\varepsilon^{2+d-\frac{n}{2}}$, we can show that
\begin{equation}\label{eq90}
\|\pi''_{r_\varepsilon}(u_{\varepsilon,R,a} |_{\mathbb{S}_{r_\varepsilon}^{n-1}})\|_{(2,\alpha),r_\varepsilon}\leq Cr_\varepsilon^{2+d-\frac{n}{2}},
\end{equation}
for some constant $C>0$ independent of $\varepsilon$, $R$ and $a$.

Observe that $(fG_p)(x)=|x|^{2-n}+O(|x|^{3-n})$ and $|\lambda|^2\leq r_\varepsilon^{d-2+\frac{3n}{2}}$ imply $\pi''_{r_\varepsilon}(\lambda (fG_p)|_{\mathbb{S}^{n-1}_{r_\varepsilon}})= O(r_\varepsilon^{2+\frac{d}{2}-\frac{n}{4}}),$
with $2+d/2-n/4>2+d-n/2$. Thus
\begin{equation}\label{eq91}
\|\pi''_{r_\varepsilon}(\lambda (fG_p)|_{\mathbb{S}^{n-1}_{r_\varepsilon}})\|_{(2,\alpha),r_\varepsilon}\leq Cr_\varepsilon^{2+d-\frac{n}{2}}.
\end{equation}
Now, using (\ref{eq74}), (\ref{eq57}), (\ref{eq72}), (\ref{eq66}), Lemma \ref{lem07} and the fact that $\overline{f}=O(|x|^2)$, we deduce that
\begin{equation}\label{eq99}
\|\phi_\vartheta-\vartheta \|_{(2,\alpha),r_\varepsilon}\leq cr_\varepsilon^{2+d-\frac{n}{2}},
\end{equation}
and
$$\|\phi_\vartheta\|_{(2,\alpha),r_\varepsilon}\leq cr_\varepsilon^{2+d-\frac{n}{2}-\delta_1},$$
for every $\vartheta\in \pi''(C^{2,\alpha}(\mathbb{S}^{n-1}_{r_\varepsilon}))$ in the ball of radius $r_\varepsilon^{2+d-\frac{n}{2}-\delta_1}$, for some constant $c>0$ that does not depend on $\varepsilon$.  Therefore we can apply Theorem \ref{teo01} with $\kappa$ equal to this constant $c$ and $\mathcal{A}_\varepsilon(R,a,\phi_\vartheta)$ is well defined. The definition (\ref{eq66}) immediately yields
$$\pi''_{r_\varepsilon}(\mathcal{A}_\varepsilon(R,a,\phi_\vartheta) |_{\mathbb{S}_{r_\varepsilon}^{n-1}}) =\pi''_{r_\varepsilon}(\mathcal{B}_{r_\varepsilon} (\lambda,\omega+\vartheta) |_{\mathbb{S}_{r_\varepsilon}^{n-1}}).$$

We project the second equation of the system (\ref{eq47}) on the high  eigenmode, the space of functions which are $L^2(\mathbb{S}^{n-1})-$orthogonal to $e_0$ $,\ldots,e_n$. This yields a nonlinear equation which can be written as
\begin{equation}\label{eq89}
r_\varepsilon \partial_r(v_{\vartheta}- u_{\vartheta})+ \mathcal{S}_\varepsilon(a,b,\lambda,\omega,\vartheta)=0,
\end{equation}
on $\partial_rB_{r_\varepsilon}(0)$, where
$$ \mathcal{S}_\varepsilon(a,b,\lambda,\omega,\vartheta) = r_\varepsilon\partial_r v_{\phi_\vartheta-\vartheta}+   r_\varepsilon\partial_r \pi''_{r_\varepsilon}(u_{\varepsilon,R,a}|_{\mathbb{S}^{n-1}_{r_\varepsilon}}) +r_\varepsilon\partial_r w_{\varepsilon,R}$$
$$+ r_\varepsilon \partial_r \pi''_{r_\varepsilon}((U_{\varepsilon,R,a,\phi_\vartheta} -  \overline{f}-\lambda fG_p-\overline{f}u_{\omega+\vartheta})|_{ \mathbb{S}^{n-1}_{r_\varepsilon}})
- r_\varepsilon \partial_r \pi''_{r_\varepsilon}((fV_{\lambda,\omega+\vartheta}) |_{ \mathbb{S}^{n-1}_{r_\varepsilon}}).$$

Since $v_\vartheta=\mathcal{P}_r(\vartheta)$ and $u_\vartheta=\mathcal{Q}_r(\vartheta)$ in $\Omega_{r_\varepsilon,\frac{1}{2}r_1}\subset M_{r_\varepsilon}$ for some $r_1>0$, see Section \ref{sec13}, from (\ref{eq62}) and (\ref{eq75}), we conclude that
$$r_\varepsilon\partial_r (v_\vartheta- u_\vartheta) (r_\varepsilon\cdot) =\partial_r(\mathcal{P}_1(\vartheta_1)-\mathcal{Q}_1(\vartheta_1)),$$
where $\vartheta_1\in C^{2,\alpha}(\mathbb{S}^{n-1})$ is defined by $\vartheta_1(\theta):=\vartheta(r\theta)$. Define an isomorphism $\mathcal{Z}:\pi''(C^{2,\alpha}(\mathcal{S}^{n-1}))\rightarrow \pi''(C^{1,\alpha}(\mathcal{S}^{n-1}))$ by
$$\mathcal{Z}(\vartheta):= \partial_r(\mathcal{P}_1(\vartheta)-\mathcal{Q}_1(\vartheta)),$$
(see \cite{M}, proof of Proposition 8 in \cite{MP} and proof of Proposition 2.6 in \cite{PR}).

To solve the equation (\ref{eq89}) it is enough to show that the map $\mathcal{H}_\varepsilon(a,b,\lambda,\omega,\cdot): \mathcal{D}_\varepsilon \rightarrow \pi''(C^{2,\alpha}(\mathcal{S}^{n-1}))$ given by
$$\mathcal{H}_\varepsilon(a,b,\lambda,\omega,\vartheta)= -\mathcal{Z}^{-1}(\mathcal{S}_\varepsilon(a,b,\lambda,\omega, \vartheta_{r_\varepsilon}) (r_\varepsilon\cdot)),$$
has a fixed point, where $\mathcal{D}_\varepsilon:=\{\vartheta\in \pi''(C^{2,\alpha}(\mathbb{S}^{n-1}));\|\vartheta\|_{(2,\alpha),1}\leq r_\varepsilon^{2+d-\frac{n}{2}-\delta_1}\}$ and $\vartheta_{r_\varepsilon}(x):=\vartheta(r_\varepsilon^{-1}x)$.

\begin{lemma}\label{lem08}
There is a constant $\varepsilon_0>0$ such that if $\varepsilon\in(0,\varepsilon_0)$, $a\in\mathbb{R}^n$ with $|a|^2\leq r_\varepsilon^{d-\frac{n}{2}}$, $b$ and $\lambda$ in $\mathbb{R}$ with $|b|\leq 1/2$ and $|\lambda|^2\leq r_\varepsilon^{d-2+\frac{3n}{2}}$, and $\omega\in C^{2,\alpha}(\mathbb{S}^{n-1}_{r_\varepsilon})$ belongs to the space spanned by the coordinate functions and with norm bounded by $r_\varepsilon^{2+d-\frac{n}{2}-\delta_1}$, then the map $\mathcal{H}_\varepsilon(a,b,\lambda,\omega,\cdot)$ has a fixed point in $\mathcal{D}_\varepsilon$.
\end{lemma}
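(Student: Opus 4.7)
The plan is to apply the Banach fixed point theorem to the map $\mathcal{H}_\varepsilon(a,b,\lambda,\omega,\cdot)$ on the closed ball $\mathcal{D}_\varepsilon$. Two things need verification: first, that $\|\mathcal{H}_\varepsilon(a,b,\lambda,\omega,0)\|_{(2,\alpha),1}\leq \tfrac{1}{2}r_\varepsilon^{2+d-n/2-\delta_1}$, and second, that for $\vartheta_1,\vartheta_2\in\mathcal{D}_\varepsilon$ we have a contraction estimate $\|\mathcal{H}_\varepsilon(\cdot,\vartheta_1)-\mathcal{H}_\varepsilon(\cdot,\vartheta_2)\|_{(2,\alpha),1}\leq \tfrac12\|\vartheta_1-\vartheta_2\|_{(2,\alpha),1}$. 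Since $\mathcal{Z}^{-1}\colon\pi''(C^{1,\alpha}(\mathbb{S}^{n-1}))\to\pi''(C^{2,\alpha}(\mathbb{S}^{n-1}))$ is an isomorphism with norm independent of $\varepsilon$, it is enough to bound $\mathcal{S}_\varepsilon(a,b,\lambda,\omega,\vartheta_{r_\varepsilon})(r_\varepsilon\cdot)$ in $C^{1,\alpha}(\mathbb{S}^{n-1})$, which (by the scaling relation $\|f(r_\varepsilon\cdot)\|_{C^{k,\alpha}(\mathbb{S}^{n-1})}=\|f\|_{(k,\alpha),r_\varepsilon}$) amounts to bounding $\mathcal{S}_\varepsilon$ itself in $\|\cdot\|_{(1,\alpha),r_\varepsilon}$.

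For the first step, I would treat the five summands of $\mathcal{S}_\varepsilon$ separately at $\vartheta=0$. The term $r_\varepsilon\partial_r v_{\phi_0-0}$ is controlled using (\ref{eq99}) (giving $\|\phi_0\|_{(2,\alpha),r_\varepsilon}\leq cr_\varepsilon^{2+d-n/2}$) and the derivative estimate (\ref{eq48}) from Lemma \ref{lem07}. The term $r_\varepsilon\partial_r\pi''_{r_\varepsilon}(u_{\varepsilon,R,a}|_{\mathbb{S}^{n-1}_{r_\varepsilon}})$ is handled via Corollary \ref{cor02} together with the hypothesis $|a|^2\leq r_\varepsilon^{d-n/2}$, yielding a bound of order $|a|^2r_\varepsilon^2\leq r_\varepsilon^{2+d-n/2}$. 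The term $r_\varepsilon\partial_r w_{\varepsilon,R}$ vanishes for $3\leq n\leq 7$ and is bounded by the decay (\ref{eq57}) for $n\geq 8$, producing again a factor $r_\varepsilon^{2+d-n/2}$. The interior remainder $r_\varepsilon\partial_r\pi''_{r_\varepsilon}((U_{\varepsilon,R,a,\phi_0}-\overline f-\lambda fG_p-\overline f u_\omega)|_{\mathbb{S}^{n-1}_{r_\varepsilon}})$ is controlled by (\ref{eq71}) of Theorem \ref{teo01}, by $\overline f=O(|x|^2)$, and by $|\lambda|^2\leq r_\varepsilon^{d-2+3n/2}$. Finally the exterior term $r_\varepsilon\partial_r\pi''_{r_\varepsilon}((fV_{\lambda,\omega})|_{\mathbb{S}^{n-1}_{r_\varepsilon}})$ is bounded via (\ref{eq72}) of Theorem \ref{teo02}, which gives $\|V_{\lambda,\omega}\|_{C^{2,\alpha}_\nu(M_{r_\varepsilon})}\leq \gamma r_\varepsilon^{2+d-\nu-n/2}$ and hence $\|V_{\lambda,\omega}\|_{(2,\alpha),[r_\varepsilon,2r_\varepsilon]}\leq \gamma r_\varepsilon^{2+d-n/2}$. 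Combining these, $\|\mathcal{H}_\varepsilon(\cdot,0)\|_{(2,\alpha),1}\leq Cr_\varepsilon^{2+d-n/2}=Cr_\varepsilon^{\delta_1}\cdot r_\varepsilon^{2+d-n/2-\delta_1}$, which is less than half the radius of $\mathcal{D}_\varepsilon$ for $\varepsilon$ small.

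For the contraction, the key inputs are the Lipschitz estimates (\ref{eq82}) and (\ref{eq86}) for $U_{\varepsilon,R,a,\phi}$ and $V_{\lambda,\varphi}$ respectively. One first shows that the map $\vartheta\mapsto\phi_\vartheta$ is nearly the identity, i.e.\ $\|\phi_{\vartheta_1}-\phi_{\vartheta_2}-(\vartheta_1-\vartheta_2)\|_{(2,\alpha),r_\varepsilon}\leq Cr_\varepsilon^{\sigma}\|\vartheta_1-\vartheta_2\|_{(2,\alpha),r_\varepsilon}$ for some $\sigma>0$, by inspecting the definition (\ref{eq66}) and using $\overline f=O(|x|^2)$ together with (\ref{eq86}) for $V_{\lambda,\omega+\vartheta}$. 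Feeding this into (\ref{eq82}), the $U$-contribution to $\mathcal{S}_\varepsilon(\vartheta_1)-\mathcal{S}_\varepsilon(\vartheta_2)$ acquires a factor $r_\varepsilon^{\delta_3}$, the $V$-contribution acquires a factor $r_\varepsilon^{\delta_5}$, and the remaining terms (harmonic extensions of $\phi_\vartheta-\vartheta$ and $\overline f u_{\omega+\vartheta}$) acquire factors $r_\varepsilon^{\sigma}$ or $r_\varepsilon^2$. After multiplying by $\|\mathcal{Z}^{-1}\|$ this yields a Lipschitz constant bounded by $Cr_\varepsilon^{\sigma'}$ for some $\sigma'>0$, which is less than $1/2$ for $\varepsilon$ small.

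The main obstacle I anticipate is bookkeeping: every term of $\mathcal{S}_\varepsilon$ lives in a weighted Hölder space with a different natural scaling, and one must repeatedly pass between norms on the sphere of radius $r_\varepsilon$ and norms on the unit sphere, while keeping track of the compensating factors of $r_\varepsilon$ from the prefactor $r_\varepsilon\partial_r$ and from the $\pi''$ projection (via Lemma \ref{lem07}). None of the individual estimates is deep, but ensuring that the exponents assemble into a strictly positive power of $r_\varepsilon$ after $\mathcal{Z}^{-1}$ is the step that requires care; this is what the size restrictions imposed on $a$, $b$, $\lambda$, $\omega$ in the lemma statement are engineered to guarantee.
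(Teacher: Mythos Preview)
Your proposal is correct and follows essentially the same route as the paper's proof: reduce to bounding $\mathcal{S}_\varepsilon$ in $\|\cdot\|_{(1,\alpha),r_\varepsilon}$ via the isomorphism $\mathcal{Z}$, then estimate the five summands of $\mathcal{S}_\varepsilon$ at $\vartheta=0$ using (\ref{eq48}), (\ref{eq57}), (\ref{eq71}), (\ref{eq72}), (\ref{eq90}), (\ref{eq91}), (\ref{eq99}) together with $\overline f=O(|x|^2)$, and obtain the contraction from the Lipschitz bounds (\ref{eq82}), (\ref{eq86}) after first checking that $\vartheta\mapsto\phi_\vartheta$ differs from the identity by a map with Lipschitz constant $O(r_\varepsilon^{\sigma})$. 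Your assessment that the only difficulty is exponent bookkeeping is exactly right.
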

\begin{proof}
As before, in Proposition \ref{propo03} and \ref{propo07} it is enough to show that
\begin{equation}\label{eq97}
\|\mathcal{H}_\varepsilon(a,b,\lambda,\omega,0)\|_{(2,\alpha),1}\leq \frac{1}{2}r_\varepsilon^{2+d-\frac{n}{2}-\delta_1}
\end{equation}
and
\begin{equation}\label{eq98}
\|\mathcal{H}_\varepsilon(a,b,\lambda,\omega,\vartheta_1)- \mathcal{H}_\varepsilon(a,b,\lambda,\omega,\vartheta_2)\|_{(2,\alpha),1}\leq \frac{1}{2} \|\vartheta_1-\vartheta_2\|_{(2,\alpha),1},
\end{equation}
for all $\vartheta_1,\vartheta_2\in\mathcal{D}_\varepsilon$.

Since $\mathcal{Z}$ is an isomorphism, we have that
$$\|\mathcal{H}_\varepsilon(a,b,\lambda,\omega,0)\|_{(2,\alpha),1}\leq C\|\mathcal{S}_\varepsilon (a,b,\lambda,\omega,0)\|_{(1,\alpha),r_\varepsilon}$$
where by (\ref{eq99}), $\phi_0$ satisfies
$$\|\phi_0\|_{(2,\alpha),r_\varepsilon}\leq cr_\varepsilon^{2+d-\frac{n}{2}},$$
where the constant $C>0$ and $c>0$ are independent of $\varepsilon$.

From (\ref{eq48}), (\ref{eq73}), (\ref{eq74}), (\ref{eq57}), (\ref{eq71}), (\ref{eq72}), (\ref{eq90}) and (\ref{eq91}) and the fact that $\overline{f}=O(|x|^2)$ we obtain
$$\|\mathcal{S}_\varepsilon (a,b,\lambda,\omega,0)\|_{(1,\alpha),r_\varepsilon}\leq cr_\varepsilon^{2+d-\frac{n}{2}}.$$
for some constant $c>0$ independent of $\varepsilon$. 

Therefore we get (\ref{eq97}) for small enough $\varepsilon$.

Now, we have
$$\|\mathcal{H}_\varepsilon(a,b,\lambda,\omega,\vartheta_1)- \mathcal{H}_\varepsilon(a,b,\lambda,\omega,\vartheta_2)\|_{(2,\alpha),1} \leq C\left(\|r_\varepsilon\partial_r v_{\phi_{\vartheta_{r_\varepsilon,1}}-\vartheta_{r_\varepsilon,1} -(\phi_{\vartheta_{r_\varepsilon,2}} -\vartheta_{r_\varepsilon,2})}\|_{(1,\alpha),r_\varepsilon}\right.$$
$$\hspace{-2cm}+ \|r_\varepsilon \partial_r\pi''_{r_\varepsilon} ((U_{\varepsilon,R,a,\phi_{\vartheta_{r_\varepsilon,1}}}- U_{\varepsilon,R,a,\phi_{\vartheta_{r_\varepsilon,2}}}) |_{\mathbb{S}^{n-1}_{r_\varepsilon}}) \|_{(1,\alpha),r_\varepsilon}$$
$$+ \|r_\varepsilon \partial_r\pi''_{r_\varepsilon}( (f(V_{\lambda,\omega+\vartheta_{r_\varepsilon,1}}- V_{\lambda,\omega+\vartheta_{r_\varepsilon,2}})) |_{\mathbb{S}^{n-1}_{r_\varepsilon}})\|_{(1,\alpha),r_\varepsilon}$$
$$\left.+\|r_\varepsilon \partial_r \pi''_{r_\varepsilon}((\overline{f}u_{\vartheta_{r_\varepsilon,1} -\vartheta_{r_\varepsilon,2}})|_{\mathbb{S}^{n-1}_{r_\varepsilon}}) \|_{(1,\alpha),r_\varepsilon}\right),$$
where, by (\ref{eq66})
$$\phi_{\vartheta_{r_\varepsilon,1}}-\vartheta_{r_\varepsilon,1} -(\phi_{\vartheta_{r_\varepsilon,2}} -\vartheta_{r_\varepsilon,2})=\pi''_{r_\varepsilon} ((\overline{f}u_{\vartheta_{r_\varepsilon,1}-\vartheta_{r_\varepsilon,2}} +\overline{f} (V_{\lambda,\omega+\vartheta_{r_\varepsilon,1}}- V_{\lambda,\omega+\vartheta_{r_\varepsilon,2}})) |_{\mathbb{S}_{r_\varepsilon}^{n-1}}).$$

Using the inequality (\ref{eq43}) of Lemma \ref{lem07}, (\ref{eq74}), (\ref{eq86}) and the fact that $\overline{f}=O(|x|^2)$, we obtain
$$\|\phi_{\vartheta_{r_\varepsilon,1}}-\vartheta_{r_\varepsilon,1}- (\phi_{\vartheta_{r_\varepsilon,2}}-\vartheta_{r_\varepsilon,2}) \|_{(2,\alpha),r_\varepsilon}\leq cr_\varepsilon^{\delta_6} \|\vartheta_{r_\varepsilon,1}- \vartheta_{r_\varepsilon,2}\|_{(2,\alpha),r_\varepsilon},$$
for some constants $\delta_6>0$ and $c>0$ that does not depend on $\varepsilon$. This implies
\begin{equation}\label{eq108}
\|r_\varepsilon\partial_r v_{\phi_{\vartheta_{r_\varepsilon,1}}-\vartheta_{r_\varepsilon,1} -(\phi_{\vartheta_{r_\varepsilon,2}} -\vartheta_{r_\varepsilon,2})}\|_{(1,\alpha),r_\varepsilon}\leq cr_\varepsilon^{\delta_6}\|\vartheta_1- \vartheta_2\|_{(2,\alpha),1}.
\end{equation}

From (\ref{eq82}) and (\ref{eq86}) we conclude that
$$\|U_{\varepsilon,R,a,\phi_{\vartheta_{r_\varepsilon,1}}}- U_{\varepsilon,R,a,\phi_{\vartheta_{r_\varepsilon,2}}}\|_{(2,\alpha), [\frac{1}{2}r_\varepsilon,r_\varepsilon]}\leq Cr_\varepsilon^{\delta_1} \|\vartheta_{r_\varepsilon,1}- \vartheta_{r_\varepsilon,2}\|_{(2,\alpha),r_\varepsilon}$$
and
$$\|V_{\lambda,\omega+\vartheta_{r_\varepsilon,1}}- V_{\lambda,\omega+\vartheta_{r_\varepsilon,2}}\|_{(2,\alpha), [r_\varepsilon,2r_\varepsilon]}\leq Cr_\varepsilon^{\delta_5} \|\vartheta_{r_\varepsilon,1}- \vartheta_{r_\varepsilon,2}\|_{(2,\alpha),r_\varepsilon},$$
for some $\delta_1>0$ and $\delta_5>0$ independent of $\varepsilon$. From this, (\ref{eq74}) and the fact that $f=1+\overline{f}$, we derive an estimate as (\ref{eq108}) for the other terms, and from this the inequality (\ref{eq98}) follows, since $\varepsilon$ is small enough.
\end{proof}

Therefore there exists a unique solution of (\ref{eq89}) in the ball of radius $r_\varepsilon^{2+d-\frac{n}{2}-\delta_1}$ in $C^{2,\alpha}(\mathbb{S}^{n-1}_{r_\varepsilon})$. We denote by $\vartheta_{\varepsilon, a,b,\lambda,\omega}$ this solution given by Lemma \ref{lem08}. Since this solution is obtained through the application of fixed point theorems for contraction mappings, it is continuous with respect to the parameters $\varepsilon$, $a$, $b$, $\lambda$ and $\omega$.

Recall that $R^{\frac{2-n}{2}}=2(1+b)\varepsilon^{-1}$ with $|b|\leq 1/2$. Hence, using (\ref{eq92}) and Corollary \ref{cor02} and \ref{cor04} we show that
$$\begin{array}{rcl}
u_{\varepsilon,R,a}(r_\varepsilon\theta) & = & 1+b+\displaystyle \frac{\varepsilon^2}{4(1+b)} {r_\varepsilon}^{2-n}+ ((n-2)u_{\varepsilon,R}(r_\varepsilon\theta)+r\partial_r u_{\varepsilon,R}(r_\varepsilon\theta))a\cdot x\\
& + & O(|a|^2r_\varepsilon^2) +O(\varepsilon^{2\frac{n+2}{n-2}}{r_\varepsilon} ^{-n}),
\end{array}$$
where the last term, $O(\varepsilon^{2\frac{n+2}{n-2}}r_\varepsilon^{-n})$, does not depend on $\theta$. Hence, we have
%
%
$$\mathcal{A}_\varepsilon(R,a,\phi_{\vartheta_ {\varepsilon,a,b,\lambda,\omega}})(r_\varepsilon\theta) =1+b+ \displaystyle\frac{\varepsilon^2}{4(1+b)}{r_\varepsilon}^{2-n} +v_{\phi_{\vartheta_{\varepsilon, a,b,\lambda,\omega}}}(r_\varepsilon\theta) +w_{\varepsilon,R}(r_\varepsilon\theta)$$
$$+ ((n-2)u_{\varepsilon,R}(r_\varepsilon\theta)+ r_\varepsilon \partial_ru_{\varepsilon,R}(r_\varepsilon\theta))r_\varepsilon a\cdot \theta $$
$$\hspace{2cm}+ U_{\varepsilon,R,a,\phi_{\vartheta_{\varepsilon, a,b,\lambda,\omega}}}(r_\varepsilon\theta)+ O(|a|^2r_\varepsilon^2)+ O(\varepsilon^{2\frac{n+2}{n-2}}r_\varepsilon^{-n}).$$

In the exterior manifold $M_{r_\varepsilon}$, in conformal normal coordinate system in the neighborhood of $\partial M_{r_\varepsilon}$, namely $\Omega_{r_\varepsilon,\frac{1}{2}r_1}$, we have
%
%
$$\mathcal{B}_{r_\varepsilon}(\lambda,\omega+\vartheta_{\varepsilon, a,b,\lambda,\omega})(r_\varepsilon\theta) = 1+\lambda r_\varepsilon^{2-n}+u_{\omega+\vartheta_{\varepsilon, a,b,\lambda,\omega}}(r_\varepsilon\theta) + \overline{f}(r_\varepsilon\theta)$$
$$+ (\overline{f}u_{\omega+\vartheta_{\varepsilon, a,b,\lambda,\omega}})(r_\varepsilon\theta) + (fV_{\lambda,\omega+\vartheta_{\varepsilon, a,b,\lambda,\omega}})(r_\varepsilon\theta) + O(|\lambda| r_\varepsilon^{3-n}).$$
Using that $w_{\varepsilon,R}\in \pi''(C_{2+d-\frac{n}{2}}^{2,\alpha}( B_{r_\varepsilon}(0)\backslash\{0\}))$, we now project the system (\ref{eq47}) on the set of functions spanned by the constant function. This yields the equations
\begin{equation}\label{eq03}
\left\{\begin{array}{rcl}
b+\left( \displaystyle\frac{\varepsilon^2}{4(1+b)}- \lambda\right)r_\varepsilon^{2-n} & = & \mathcal{H}_{0,\varepsilon}(a,b,\lambda,\omega)\\
(2-n)\left( \displaystyle\frac{\varepsilon^2}{4(1+b)}- \lambda\right)r_\varepsilon^{2-n} & = & r_\varepsilon\partial_r\mathcal{H}_{0,\varepsilon}(a,b,\lambda,\omega)
\end{array}\right.,
\end{equation}
where $\mathcal{H}_{0,\varepsilon}$ and $\partial_r \mathcal{H}_{0,\varepsilon}$ are continuous maps and satisfy
\begin{equation}\label{eq00}
\mathcal{H}_{0,\varepsilon}(a,b,\lambda,\omega)= O(r_\varepsilon^{2+d-\frac{n}{2}})\;\;\;\mbox{ and }\;\;\;r_\varepsilon\partial_r \mathcal{H}_{0,\varepsilon}(a,b,\lambda,\omega)= O(r_\varepsilon^{2+d-\frac{n}{2}}).
\end{equation}
\begin{lemma}\label{lem12}
There is a constant $\varepsilon_1>0$ such that if $\varepsilon\in(0,\varepsilon_1)$, $a\in\mathbb{R}^n$ with $|a|^2\leq r_\varepsilon^{d-\frac{n}{2}}$ and $\omega\in C^{2,\alpha}(\mathbb{S}^{n-1}_{r_\varepsilon})$ belongs to the space spanned by the coordinate functions and has norm bounded by $r_\varepsilon^{2+d-\frac{n}{2}-\delta_1}$, then the system (\ref{eq03}) has a solution $(b,\lambda)\in\mathbb{R}^2$, with $|b|\leq 1/2$ and $|\lambda|^2\leq r_\varepsilon^{d-2+\frac{3n}{2}}$.
\end{lemma}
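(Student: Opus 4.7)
My plan is to rewrite the system (\ref{eq03}) as a fixed-point problem on a compact convex subset of $\mathbb{R}^2$ and apply Brouwer's fixed point theorem, using continuity of $\mathcal{H}_{0,\varepsilon}$ together with the decay estimates (\ref{eq00}) and the restriction $s < 4(d-2+3n/2)^{-1}$ on the exponent defining $r_\varepsilon = \varepsilon^s$.

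Concretely, I decouple the system: from the second equation of (\ref{eq03}), I solve
\[
\frac{\varepsilon^2}{4(1+b)} - \lambda \;=\; \frac{r_\varepsilon^{\,n-1}}{2-n}\, r_\varepsilon\partial_r\mathcal{H}_{0,\varepsilon}(a,b,\lambda,\omega),
\]
and substitute into the first equation of (\ref{eq03}) to obtain
\[
b \;=\; \mathcal{H}_{0,\varepsilon}(a,b,\lambda,\omega) \;-\; \frac{1}{2-n}\,r_\varepsilon\partial_r\mathcal{H}_{0,\varepsilon}(a,b,\lambda,\omega) \;=:\; F_1(b,\lambda),
\]
\[
\lambda \;=\; \frac{\varepsilon^2}{4(1+b)} \;-\; \frac{r_\varepsilon^{\,n-1}}{2-n}\,r_\varepsilon\partial_r\mathcal{H}_{0,\varepsilon}(a,b,\lambda,\omega) \;=:\; F_2(b,\lambda).
\]
Solving (\ref{eq03}) is equivalent to finding a fixed point of the continuous map $T_\varepsilon(b,\lambda):=(F_1(b,\lambda),F_2(b,\lambda))$, where continuity comes from the continuity of $\mathcal{H}_{0,\varepsilon}$ and $r_\varepsilon\partial_r\mathcal{H}_{0,\varepsilon}$ in their arguments (inherited from the contraction-mapping construction of the previous fixed points $U_{\varepsilon,R,a,\phi}$, $V_{\lambda,\varphi}$ and $\vartheta_{\varepsilon,a,b,\lambda,\omega}$).

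I then verify that $T_\varepsilon$ sends the compact convex set $K_\varepsilon := \{(b,\lambda)\in\mathbb{R}^2 : |b|\le 1/2,\ |\lambda|^2\le r_\varepsilon^{\,d-2+3n/2}\}$ into itself for small enough $\varepsilon$. By (\ref{eq00}),
\[
|F_1(b,\lambda)| \;\le\; |\mathcal{H}_{0,\varepsilon}| + \tfrac{1}{n-2}|r_\varepsilon\partial_r\mathcal{H}_{0,\varepsilon}| \;=\; O\bigl(r_\varepsilon^{\,2+d-n/2}\bigr),
\]
and since $2+d-n/2\ge 1/2$ for every $n\ge 3$, this is $\le 1/2$ for small $\varepsilon$. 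For the second coordinate, on $K_\varepsilon$ we have $|1+b|\ge 1/2$, so
\[
|F_2(b,\lambda)| \;\le\; \frac{\varepsilon^2}{2} + \frac{r_\varepsilon^{\,n-1}}{n-2}\cdot O\bigl(r_\varepsilon^{\,2+d-n/2}\bigr) \;=\; \frac{\varepsilon^2}{2} + O\bigl(r_\varepsilon^{\,1+n/2+d}\bigr).
\]
Squaring, $|F_2|^2 \le \varepsilon^4 + o(\varepsilon^4)$. The key quantitative step is the comparison $\varepsilon^4 \le r_\varepsilon^{\,d-2+3n/2}=\varepsilon^{s(d-2+3n/2)}$, which holds for small $\varepsilon$ precisely because the choice $s < 4(d-2+3n/2)^{-1}$ gives $s(d-2+3n/2) < 4$. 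Hence $T_\varepsilon(K_\varepsilon)\subset K_\varepsilon$ once $\varepsilon \le \varepsilon_1$ is small enough, and Brouwer's fixed point theorem produces $(b,\lambda)\in K_\varepsilon$ solving (\ref{eq03}).

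The main (and only) subtlety is the second inclusion: one must keep track of the fact that $\lambda$ is essentially $\varepsilon^2/4$ to leading order (a honest constant, not a perturbative quantity), so its size is not gained from the smallness of $\mathcal{H}_{0,\varepsilon}$ but rather controlled by the \emph{a priori} restriction $s<4(d-2+3n/2)^{-1}$ built into the definition of $r_\varepsilon$ in Remark~\ref{rem01}. This is exactly what forces that exponent range. Everything else is bookkeeping: continuity of $T_\varepsilon$ and self-mapping of $K_\varepsilon$.
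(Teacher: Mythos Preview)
Your proof is essentially identical to the paper's: both rewrite system (\ref{eq03}) as a fixed-point map $(b,\lambda)\mapsto\bigl(\mathcal{H}_{0,\varepsilon}+\tfrac{1}{n-2}r_\varepsilon\partial_r\mathcal{H}_{0,\varepsilon},\ \tfrac{\varepsilon^2}{4(1+b)}+\tfrac{r_\varepsilon^{n-2}}{n-2}\,r_\varepsilon\partial_r\mathcal{H}_{0,\varepsilon}\bigr)$ on the box $\{|b|\le 1/2,\ |\lambda|\le r_\varepsilon^{d/2-1+3n/4}\}$, verify self-mapping via (\ref{eq00}) and the key inequality $s(d/2-1+3n/4)<2$ from Remark~\ref{rem01}, and conclude by Brouwer. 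One small slip: dividing the second equation of (\ref{eq03}) by $(2-n)r_\varepsilon^{2-n}$ yields a factor $r_\varepsilon^{n-2}$, not $r_\varepsilon^{n-1}$, so the correction term in $F_2$ is $O(r_\varepsilon^{\,n/2+d})$ rather than $O(r_\varepsilon^{\,1+n/2+d})$; the self-mapping still holds since $n/2+d>d/2-1+3n/4$.
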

\begin{proof} Define a continuous map $\mathcal{G}_{\varepsilon,a,\omega}: \mathcal{D}_{0,\varepsilon}\rightarrow \mathbb{R}^2$ by
$$\begin{array}{rcl}
\mathcal{G}_{\varepsilon,a,\omega}(b,\lambda) & := & \displaystyle \left(\frac{r_\varepsilon}{n-2}\partial_r\mathcal{H}_{0,\varepsilon} (a,b,\lambda,\omega)+\mathcal{H}_{0,\varepsilon} (a,b,\lambda,\omega),\right.\\
\\
& & \displaystyle \left.\frac{\varepsilon^2}{4(1+b)}+\frac{r_\varepsilon^{n-1}}{n-2} \partial_r\mathcal{H}_{0,\varepsilon} (a,b,\lambda,\omega)\right),
\end{array}$$
where $\mathcal{D}_{0,\varepsilon}:= \{(b,\lambda)\in\mathbb{R}^2; |b|\leq 1/2\mbox{ and } |\lambda|\leq r_\varepsilon^{\frac{d}{2}-1+\frac{3n}{4}}\}$.

Then, using (\ref{eq00}) and the fact that $2> s(d/2-1+3n/4)$, we can show that
$\mathcal{G}_{\varepsilon,a,\omega}(\mathcal{D}_{0,\varepsilon})\subset \mathcal{D}_{0,\varepsilon},$ for small enough $\varepsilon>0$. By the Brouwer's fixed point theorem it follows that there exists a fixed point of the map $\mathcal{G}_{\varepsilon,a,\omega}$. Obviously, this fixed point is a solution of the system (\ref{eq03}).
\end{proof}

With further work, one can also show that the mapping is a contraction, and hence that the fixed point is unique and depends continuously on the parameter $\varepsilon$, $a$ and $\omega$. 

From now on we will work with the fixed point given by Lemma \ref{lem12} and we will write simply as $(b,\lambda)$.

Finally, we project the system (\ref{eq47}) over the space of functions spanned by the coordinate functions. It will be convenient to decompose $\omega$ in
\begin{equation}\label{eq102}
\omega=\sum_{i=1}^n\omega_ie_i,\;\;\;\mbox{ where }\;\;\;\omega_i=\int_{\mathbb{S}^{n-1}}\omega(r_\varepsilon\cdot)e_i.
\end{equation}

Hence, $|\omega_i|\leq c_n\sup_{\mathbb{S}^{n-1}_{r_\varepsilon}}|\omega|.$ From this and Remark \ref{remark01} we get the system
\begin{equation}\label{eq101}
\left\{\begin{array}{rcl}
F(r_\varepsilon)r_\varepsilon a_i-\omega_i & = & \mathcal{H}_{i,\varepsilon}(a,\omega)\\
G(r_\varepsilon)r_\varepsilon a_i-(1-n)\omega_i & = & r_\varepsilon\partial_r \mathcal{H}_{i,\varepsilon}(a,\omega),
\end{array}\right.
\end{equation}
$i=1,\ldots,n$, where
$$F(r_\varepsilon):= (n-2)u_{\varepsilon,R}(r_\varepsilon\theta)+ r_\varepsilon\partial_r u_{\varepsilon,R}(r_\varepsilon\theta),$$
$$G(r_\varepsilon):= (n-2)u_{\varepsilon,R}(r_\varepsilon\theta)+ nr_\varepsilon\partial_r u_{\varepsilon,R}(r_\varepsilon\theta) +r_\varepsilon^2\partial_r^2 u_{\varepsilon,R}(r_\varepsilon\theta),$$
\begin{equation}\label{eq103}
\mathcal{H}_{i,\varepsilon}(a,\omega)= O(r_\varepsilon^{2+d-\frac{n}{2}})
\;\;\;\mbox{ and }\;\;\;r_\varepsilon\partial_r\mathcal{H}_{i,\varepsilon}(a,\omega)= O(r_\varepsilon^{2+d-\frac{n}{2}}).
\end{equation}
The maps $\mathcal{H}_{i,\varepsilon}$ and $\partial_r\mathcal{H}_{i,\varepsilon}$ are continuous.

\begin{lemma}\label{lem13}
There is a constant $\varepsilon_2>0$ such that if $\varepsilon\in(0,\varepsilon_2)$ then the system (\ref{eq101}) has a solution $(a,\omega)\in\mathbb{R}^n\times C^{2,\alpha}(\mathbb{S}^{n-1}_{r_\varepsilon})$ with $|a|^2\leq r_\varepsilon^{d-\frac{n}{2}}$ and $\omega$ given by (\ref{eq102}) of norm bounded by $r_{\varepsilon}^{2+d-\frac{n}{2}-\delta_1}$.
\end{lemma}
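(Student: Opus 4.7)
The plan is to solve (\ref{eq101}) by inverting its linear part and applying Brouwer's fixed-point theorem, in parallel with the argument for Lemma \ref{lem12}. Note first that $b,\lambda$ are already chosen from Lemma \ref{lem12} as continuous functions of $(a,\omega)$ and that $\vartheta_{\varepsilon,a,b,\lambda,\omega}$ depends continuously on these parameters by Lemma \ref{lem08}, so the right-hand sides $\mathcal{H}_{i,\varepsilon}(a,\omega)$ and $r_\varepsilon\partial_r\mathcal{H}_{i,\varepsilon}(a,\omega)$ are continuous functions of $(a,\omega)$ satisfying the estimate (\ref{eq103}).

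First I would analyze the linear part. Using Corollary \ref{cor04} together with the normalization $R^{(2-n)/2}=2(1+b)\varepsilon^{-1}$, one obtains
\[
u_{\varepsilon,R}(r_\varepsilon\theta)=(1+b)+\tfrac{\varepsilon^{2}}{4(1+b)}r_\varepsilon^{2-n}+o(1),
\]
\[
r_\varepsilon\partial_r u_{\varepsilon,R}(r_\varepsilon\theta)=-\tfrac{(n-2)\varepsilon^{2}}{4(1+b)}r_\varepsilon^{2-n}+o(1),
\]
\[
r_\varepsilon^{2}\partial_r^{2}u_{\varepsilon,R}(r_\varepsilon\theta)=\tfrac{(n-2)^{2}\varepsilon^{2}}{4(1+b)}r_\varepsilon^{2-n}+o(1),
\]
where the remainders vanish as $\varepsilon\to 0$ thanks to the constraint $s<4/(d-2+3n/2)$ on the exponent $r_\varepsilon=\varepsilon^{s}$. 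The crucial point is that the $\varepsilon^{2}r_\varepsilon^{2-n}$ contributions cancel exactly in $F(r_\varepsilon)$, giving $F(r_\varepsilon)=(n-2)(1+b)+o(1)$, while $G(r_\varepsilon)=(n-2)(1+b)+O(\varepsilon^{2}r_\varepsilon^{2-n})=(n-2)(1+b)(1+o(1))$. Consequently the determinant of the $2\times 2$ coefficient matrix
\[
M=\begin{pmatrix} F(r_\varepsilon)r_\varepsilon & -1 \\ G(r_\varepsilon)r_\varepsilon & n-1\end{pmatrix}
\]
of each decoupled pair $(a_i,\omega_i)$ equals $r_\varepsilon[(n-1)F(r_\varepsilon)+G(r_\varepsilon)]=n(n-2)(1+b)r_\varepsilon(1+o(1))$, which is bounded below by $c\,r_\varepsilon$ for some $c>0$ that is uniform in $|b|\leq 1/2$.

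Next I would introduce the continuous map $\mathcal{K}_\varepsilon:\mathcal{D}\to\mathbb{R}^{n}\times V$, where $V=\operatorname{span}\{e_{1},\dots,e_{n}\}$ and
\[
\mathcal{D}=\bigl\{(a,\omega)\in\mathbb{R}^{n}\times V:|a|^{2}\leq r_\varepsilon^{d-n/2},\ \|\omega\|_{(2,\alpha),r_\varepsilon}\leq r_\varepsilon^{2+d-n/2-\delta_{1}}\bigr\},
\]
defined by solving the $2\times 2$ system (\ref{eq101}) for $(a'_i,\omega'_i)$ with right-hand side evaluated at $(a,\omega)$. Applying $M^{-1}$ together with (\ref{eq103}) yields
\[
|a'_{i}|=O(r_\varepsilon^{1+d-n/2}),\qquad |\omega'_{i}|=O(r_\varepsilon^{2+d-n/2}).
\]
Since $d+2-n/2>0$ (for both parities of $n$, given $d=[(n-2)/2]$), we have $|a'|^{2}=O(r_\varepsilon^{2+2d-n})=O(r_\varepsilon^{d-n/2}\cdot r_\varepsilon^{d+2-n/2})\leq r_\varepsilon^{d-n/2}$ and $\|\omega'\|_{(2,\alpha),r_\varepsilon}=O(r_\varepsilon^{2+d-n/2})\leq r_\varepsilon^{2+d-n/2-\delta_{1}}$ for all sufficiently small $\varepsilon$. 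Hence $\mathcal{K}_\varepsilon(\mathcal{D})\subset\mathcal{D}$.

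Finally, $\mathcal{D}$ is a compact convex subset of the finite-dimensional space $\mathbb{R}^{n}\times V\cong\mathbb{R}^{2n}$ (the norm $\|\cdot\|_{(2,\alpha),r_\varepsilon}$ is equivalent to the Euclidean norm on coefficients on the finite-dimensional $V$), so Brouwer's fixed-point theorem applied to the continuous map $\mathcal{K}_\varepsilon$ produces a fixed point, which is precisely the desired solution of (\ref{eq101}) with the required bounds. The main technical point is the exact cancellation of the $\varepsilon^{2}r_\varepsilon^{2-n}$ term in $F(r_\varepsilon)$ together with its subdominance in $G(r_\varepsilon)$; both are consequences of the careful calibration of $R$ and the choice of $s$, $\delta_{1}$ made in previous sections.
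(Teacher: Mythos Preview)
Your proposal is correct and follows essentially the same approach as the paper: invert the $2\times 2$ linear part of each pair of equations in (\ref{eq101}) using the asymptotics $F(r_\varepsilon)=(n-2)(1+b)+o(1)$ and $(n-1)F(r_\varepsilon)+G(r_\varepsilon)=n(n-2)(1+b)+o(1)$, then apply Brouwer's fixed-point theorem on a compact convex set determined by the bounds (\ref{eq103}). The only cosmetic difference is that the paper writes the fixed-point map coordinate by coordinate as $\mathcal{K}_{i,\varepsilon}:\mathcal{D}_{i,\varepsilon}\to\mathbb{R}^2$, whereas you treat the full $2n$-dimensional system $\mathcal{K}_\varepsilon:\mathcal{D}\to\mathbb{R}^n\times V$ at once; since $\mathcal{H}_{i,\varepsilon}$ depends on the whole vector $(a,\omega)$, your formulation is in fact the cleaner one.
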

\begin{proof}
Define a continuous map $\mathcal{K}_{i,\varepsilon}:\mathcal{D}_{i,\varepsilon}\rightarrow \mathbb{R}^2$ by
$$\mathcal{K}_{i,\varepsilon}(a_i,\omega_i) := \left( (G(r_\varepsilon)+ (n-1)F(r_\varepsilon))^{-1}r_\varepsilon^{-1} (r_\varepsilon\partial_r\mathcal{H}_{i,\varepsilon}(a,\omega)+ (n-1)\mathcal{H}_{i,\varepsilon}),\right.$$
$$
\hspace{1,5cm}\left. (G(r_\varepsilon)+ (n-1)F(r_\varepsilon))^{-1}F(r_\varepsilon) (r_\varepsilon\partial_r\mathcal{H}_{i,\varepsilon}(a,\omega) +(n-1)\mathcal{H}_{i,\varepsilon})-\mathcal{H}_{i,\varepsilon}\right),$$
where $\mathcal{D}_{i,\varepsilon}:= \{(a_i,\omega_i)\in\mathbb{R}^2;|a_i|^2\leq n^{-1}r_\varepsilon^{d-\frac{n}{2}}\mbox{ and }|\omega_i|\leq n^{-1}k_{i,n}^{-1}r_\varepsilon^{2+d-\frac{n}{2}-\delta_1}\}$, $k_{i,n}=\|e_i\|_{(2,\alpha),1}$, $F(r_\varepsilon) = (n-2)(1+b)+ O(\varepsilon^{2-s(n-2)})$ and $G(r_\varepsilon)+ (n-1)F(r_\varepsilon)= n(n-2)(1+b)+ O(\varepsilon^{2-s(n-2)})$ with $2-s(n-2)>0$. 

From (\ref{eq103}) we obtain that $\mathcal{K}_{i,\varepsilon}(\mathcal{D}_{i,\varepsilon})\subset \mathcal{D}_{i,\varepsilon},$ for small enough $\varepsilon>0$. Again, by the Brouwer's fixed point theorem there exists a fixed point of the map $\mathcal{K}_{i,\varepsilon}$ and this fixed point is a solution of the system (\ref{eq101}).
\end{proof}

Now we are ready to prove the main theorem of this paper.
\begin{proof}[\bf Proof of Theorem \ref{teo04}] We keep the notation of the last section. Using Theorem \ref{teo01} we find a family of constant scalar curvature metrics in $\overline{B_{r_\varepsilon}(p)}\subset M$, for small enough $\varepsilon>0$, given by
$$\hat{g}=\mathcal{A}_\varepsilon (R,a,\phi)^{\frac{4}{n-2}}g,$$
with the parameters $R\in\mathbb{R}^+$, $a\in\mathbb{R}^n$ and $\phi\in\pi''(C^{2,\alpha}(\mathbb{S}_{r_\varepsilon}^{n-1}))$ satisfying the conditions I1--I5.

From Theorem \ref{teo02} we obtain a family of constant scalar curvature metrics in $M\backslash B_{r_\varepsilon}(p)$, for small enough $\varepsilon>0$, given by
$$\tilde{g}=\mathcal{B}_{r_\varepsilon}(\lambda,\varphi)^{\frac{4}{n-2}}g,$$
with the parameters $\lambda\in\mathbb{R}$ and $\varphi\in C^{2,\alpha}(\mathbb{S}_{r_\varepsilon}^{n-1})$ satisfying the conditions E1--E4. As before, the metric $g$ is conformal to the metric $g_0$.

From Lemmas \ref{lem08}, \ref{lem12} and \ref{lem13} we conclude that there is $\varepsilon_0>0$ such that for all $\varepsilon\in(0,\varepsilon_0)$ there are parameters $R_\varepsilon$, $a_\varepsilon$, $\phi_\varepsilon$, $\lambda_\varepsilon$ and $\varphi_\varepsilon$ for which the functions $\mathcal{A}_\varepsilon (R_\varepsilon,a_\varepsilon,\phi_\varepsilon)$ and $\mathcal{B}_{r_\varepsilon}(\lambda_\varepsilon,\varphi_\varepsilon)$ coincide up to order one in $\partial B_{r_\varepsilon}(p)$. Hence using elliptic regularity we show that the function $\mathcal{W}_\varepsilon$ defined by $\mathcal{W}_\varepsilon:=\mathcal{A}_\varepsilon (R_\varepsilon,a_\varepsilon,\phi_\varepsilon)$ in $B_{r_\varepsilon}(p)\backslash\{p\}$ and $\mathcal{W}_\varepsilon:= \mathcal{B}_{r_\varepsilon}(\lambda_\varepsilon,\varphi_\varepsilon)$ in $M\backslash B_{r_\varepsilon}(p)$ is a positive smooth function in $M\backslash\{p\}$. Moreover, $\mathcal{W}_\varepsilon$ tends to infinity on approach to $p$.

Therefore, the metric $g_\varepsilon:=\mathcal{W}_\varepsilon^{\frac{4}{n-2}}g$ is a complete smooth metric defined in $M\backslash\{p\}$ and by Theorem \ref{teo01} and \ref{teo02} it satisfies i), ii) and iii).
\end{proof}
%
%
%
\section{Multiple point gluing}\label{sec16}
In this final section we discuss the minor changes that need to be made in order to deal with more than one singular point. Let $X=\{p_1,\ldots,p_k\}$ so that at each point we have $\nabla^l W_{g_0}(p_i)=0$, for $l=0,\dots,d-2$. 

As in the previous case, there are three steps. In Section \ref{sec02} we do not need to make any changes, since the analysis is done at each point $p_i$. Here, we find a family of metrics defined in $B_{r_{\varepsilon_i}}(p)\backslash\{p\}$, with $\varepsilon_i=t_i\varepsilon$, $\varepsilon>0$, $t_i\in(\delta,\delta^{-1})$ and $\delta>0$ fixed, $i=1,\ldots,k$.

In order to get a family of metrics as in Section \ref{sec04} we need to make some changes. Let $\Psi_i:B_{2r_0}(0)\rightarrow M$ be a normal coordinate system with respect to $g_i=\mathcal{F}_i^{\frac{4}{n-2}}g_0$ on $M$ centered at $p_i$. Here, $\mathcal{F}_i$ is such that as in Section \ref{sec04}. Therefore, each metric $g_i$ gives us conformal normal coordinates centered at $p_i$. Recall that $\mathcal{F}_i=1+O(|x|^2)$ in the coordinate system $\Psi_i$. Denote by $G_{p_i}$ the Green's function for $L_{g_0}^1$ with pole at $p_i$ and assume that $\displaystyle\lim_{x\rightarrow 0} |x|^{n-2}G_{p_i}(x)=1$ in the coordinate system $\Psi_i$. Let $G_{p_1,\ldots,p_k}\in C^\infty(M\backslash\{p_1,\ldots,p_k\})$ be such that
$$G_{p_1,\ldots,p_k}=\sum_{i=1}^k\lambda_i G_{p_i},$$
where $\lambda_i\in\mathbb{R}$.

Let $r=(r_{\varepsilon_1},\ldots,r_{\varepsilon_k})$. Denote by $M_{r}$ the complement in $M$ of the union of $\Psi_i(B_{r_{\varepsilon_i}}(0))$ and define the space $C_\nu^{l,\alpha} (M\backslash\{p_1,\ldots,p_k\})$ as in Definition \ref{def4}, with the following norm
$$\|v\|_{C_\nu^{l,\alpha}(M\backslash\{p\})}:= \|v\|_{C^{l,\alpha}(M_{\frac{1}{2}r_0})} +\sum_{i=1}^k\|v\circ\Psi_i\|_{(l,\alpha),\nu,r_0}.$$
The space $C_\nu^{l,\alpha} (M_{r})$ is defined similarly. 

It is possible to show an analogue of Proposition \ref{propo04} in this context, with $w\in\mathbb{R}$ constant on any component of $\partial M_{r}$.

Let $\varphi=(\varphi_1,\ldots,\varphi_k)$, with $\varphi_i\in C^{2,\alpha}(\mathbb{S}_r^{n-1})$ $L^2-$orthogonal to the constant functions. Let $u_\varphi\in C_\nu^{2,\alpha}(M_{r})$ be such that $u_\varphi\circ\Psi_i=\eta\mathcal{Q}_{r_{\varepsilon_i}}(\varphi_i)$, where $\eta$ is a smooth, radial function equal to 1 in $B_{r_0}(0)$, vanishing in $\mathbb{R}^n\backslash B_{2r_0}(0)$, and satisfying $|\partial_r\eta(x)|\leq c|x|^{-1}$ and $|\partial_r^2\eta(x)|\leq c|x|^{-2}$ for all $x\in B_{2r_0}(0)$.

Finally, in the same way that we showed the existence of solutions to the equation (\ref{eq25}), we solve the equation
$$H_{g_0}(1+G_{p_1,\ldots,p_k}+u_\varphi+u)=0.$$

The result reads as follows:
\begin{theorem}
Let $(M^n,g_0)$ be an $n-$dimensional compact Riemannian manifold of scalar curvature $n(n-1)$, nondegenerate about 1. Let $\{p_1,\ldots,p_k\}$ a set of points in $M$ so that $\nabla^j_{g_0}W_{g_0}(p_i)=0$ for $j=0,\ldots,\left[\frac{n-6}{2}\right]$ and $i=1,\ldots,k$, where $W_{g_0}$ is the Weyl tensor of the metric $g_0$. There exists a complete metric $g$ on $M\backslash\{p_1,\ldots,p_k\}$ conformal to $g_0$, with constant scalar curvature $n(n-1)$, obtained by attaching Delaunay-type ends to the points $p_1,\ldots,p_k$.
\end{theorem}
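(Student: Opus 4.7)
The plan is to run the same three-step strategy (interior analysis, exterior analysis, matching Cauchy data) used in the one-point case, carried out independently at each puncture $p_i$ for the interior and matching parts, and globalized for the exterior part.

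\textbf{Interior analysis.} Fix $\delta > 0$ and for $\varepsilon > 0$ small set $\varepsilon_i = t_i\varepsilon$ with $t_i \in (\delta,\delta^{-1})$. In conformal normal coordinates $\Psi_i$ about $p_i$ with respect to $g_i = \mathcal{F}_i^{4/(n-2)}g_0$, the hypothesis $\nabla^l W_{g_0}(p_i)=0$ for $l=0,\dots,d-2$ is exactly what is needed to apply Theorem \ref{teo01} verbatim. This produces, for prescribed high-eigenmode data $\phi_i$ on $\partial B_{r_{\varepsilon_i}}(p_i)$ and parameters $R_i, a_i$, a metric $\hat{g}_i = \mathcal{A}_{\varepsilon_i}(R_i, a_i, \phi_i)^{4/(n-2)} g_i$ of scalar curvature $n(n-1)$ on $B_{r_{\varepsilon_i}}(p_i)\setminus\{p_i\}$, with the estimates (\ref{eq71}) and (\ref{eq82}). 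Since the balls $B_{r_{\varepsilon_i}}(p_i)$ are disjoint for small $\varepsilon$, no interaction between the points occurs at this stage.

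\textbf{Exterior analysis.} Let $r = (r_{\varepsilon_1},\dots,r_{\varepsilon_k})$ and $M_r := M\setminus\bigcup_i \Psi_i(B_{r_{\varepsilon_i}}(0))$. Define $C^{l,\alpha}_\nu(M_r)$ via the natural norm that sums weighted seminorms in each coordinate chart $\Psi_i$ plus a uniform norm away from the punctures. The core step is the multi-puncture analog of Proposition \ref{propo04}: using nondegeneracy of $L_{g_0}^1 = \Delta_{g_0}+n$ on $M$, build a right inverse $G_{r,g_0}$ bounded independently of $r$. The proof is the same, with $w_0 := \sum_i \eta_i\, \tilde G_{r_{\varepsilon_i},r_0}(f|_{\Omega_{r_{\varepsilon_i},r_0}})$ (disjoint cutoffs around each $p_i$), corrected by the bounded inverse of $L_{g_0}^1$ on all of $M$; the perturbation argument in Proposition \ref{propo04} goes through because the cutoff regions remain disjoint. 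Then with $G_{p_1,\dots,p_k} = \sum_i \lambda_i G_{p_i}$ and $u_\varphi\circ\Psi_i = \eta\,\mathcal{Q}_{r_{\varepsilon_i}}(\varphi_i)$ for boundary data $\varphi = (\varphi_1,\dots,\varphi_k)$ with each $\varphi_i$ $L^2$-orthogonal to constants, Theorem \ref{teo02} extends to a solution $V_{\lambda,\varphi}$ of $H_{g_0}(1+G_{p_1,\dots,p_k}+u_\varphi+V_{\lambda,\varphi})=0$ on $M_r$, with analogous estimates. The exterior conformal factor is $\mathcal{B}_r(\lambda,\varphi) = f + fG_{p_1,\dots,p_k} + f u_\varphi + f V_{\lambda,\varphi}$.

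\textbf{Matching.} At each sphere $\partial B_{r_{\varepsilon_i}}(p_i)$ we impose $\mathcal{A}_{\varepsilon_i}(R_i,a_i,\phi_i) = \mathcal{B}_r(\lambda,\varphi)$ and their $r$-derivatives, giving $k$ systems of the same three types as in Section \ref{sec14}: (a) a high-eigenmode equation solved for $\phi_i$ and $\vartheta_i$ (high mode of $\varphi_i$) by a contraction-mapping argument as in Lemma \ref{lem08}; (b) a constant-mode pair determining $(b_i,\lambda_i)$; (c) $n$ coordinate-mode equations determining $(a_i, \omega_i)$. The new feature is that on $\partial B_{r_{\varepsilon_i}}(p_i)$ the Green's function contribution is $\lambda_i r_{\varepsilon_i}^{2-n} + \sum_{j\ne i}\lambda_j G_{p_j}(p_i) + O(r_{\varepsilon_i})$, so the cross terms $\sum_{j\ne i}\lambda_j G_{p_j}(p_i)$ appear as constants on the $i$-th sphere. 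Since $|\lambda_j|^2 \le r_{\varepsilon_j}^{d-2+3n/2}$ and the $G_{p_j}(p_i)$ are fixed constants, these cross terms are small and can be absorbed into the analog of $\mathcal{H}_{0,\varepsilon,i}$ in (\ref{eq00}); Lemma \ref{lem12} then applies to each point separately by Brouwer's theorem. The coordinate-mode system (\ref{eq101}) also decouples across points (the cross terms contribute only to higher order), so Lemma \ref{lem13} applies at each $i$.

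\textbf{Main obstacle.} The critical verification is that the multi-puncture right inverse $G_{r,g_0}$ is bounded uniformly as all $r_{\varepsilon_i}\to 0$; the bound could in principle degenerate since we are inverting on a domain with many shrinking holes, but because the cutoffs $\eta_i$ have disjoint supports for small $\varepsilon$ and the correction only uses the global inverse on $M$ (whose norm is fixed by nondegeneracy), the estimates go through uniformly. The second delicate point is that the cross terms $\sum_{j\ne i}\lambda_j G_{p_j}$ at $p_i$ must be small enough to preserve the contraction estimates in the matching step; this follows from the bound $|\lambda_j|\le r_{\varepsilon_j}^{(d-2+3n/2)/2}$ together with the ratios $t_i/t_j$ remaining bounded. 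Once these two points are confirmed, elliptic regularity at each matching sphere yields the global smooth complete metric $g = \mathcal{W}_\varepsilon^{4/(n-2)} g$ with Delaunay-type ends at each $p_i$.
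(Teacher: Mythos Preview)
Your proposal is correct and follows essentially the same approach as the paper's Section~\ref{sec16}: interior analysis unchanged at each $p_i$ with $\varepsilon_i=t_i\varepsilon$, exterior analysis on $M_r$ using $G_{p_1,\ldots,p_k}=\sum_i\lambda_iG_{p_i}$ and the multi-puncture analogue of Proposition~\ref{propo04}, then matching at each boundary sphere. In fact you give more detail on the matching step (handling the cross terms $\sum_{j\neq i}\lambda_jG_{p_j}(p_i)$ and the uniform bound on $G_{r,g_0}$) than the paper's brief sketch, and these additional observations are correct.
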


%
%

\end{document}